\documentclass[a4paper,10pt]{article}
\usepackage{amsthm,amssymb,amsfonts,mathrsfs}
\usepackage{xcolor}
\usepackage{tcolorbox}
\usepackage{amsmath}
\usepackage{amssymb}
\usepackage{multicol} % İki sütunlu düzen için
\usepackage{graphicx}
\usepackage[numbers]{natbib}
\usepackage{xcolor}
\usepackage{titling}
 \usepackage{caption} % preamble'da ekle
\newtheorem{theorem}{Theorem}[section]
\usepackage{tabularx}  
\usepackage{array}
\usepackage{multirow}
\usepackage{pdflscape} % O \usepackage{lscape}
\usepackage{makecell}
\usepackage{hyperref}
\hypersetup{colorlinks=true, linkcolor=blue}

\usepackage[
hmarginratio=1:1,
a4paper,
bottom=3.4cm,
top=3.6cm,
left=2.7cm,
right=2.7cm,
headheight=16pt
]{geometry}

\newtheorem{proposition}[theorem]{Proposition}%
\newtheorem{lemma}[theorem]{Lemma}
\newtheorem{corollary}[theorem]{Corollary}

\newtheorem{example}[theorem]{Example}%
\newtheorem{remark}[theorem]{Remark}%
\newcommand{\ad}{\operatorname{ad}}
\newcommand{\im}{\operatorname{Im}}

\newtheorem{definition}[theorem]{Definition}%

\title{Transversely K\"ahler almost contact metric Lie algebras}

\author{Giulia Dileo, Deniz Poyraz, Bayram \c{S}ahin
}
\date{}

\renewcommand{\maketitlehookd}{\begin{abstract}

We study transversely K\"ahler almost contact metric Lie algebras $(\mathfrak{g},\varphi,\xi,\eta,g)$ such that the structure $1$-form $\eta$ is a contact form. They include both quasi Sasakian and anti-quasi-Sasakian Lie algebras of maximal rank. In the case where the center of the Lie algebra is nontrivial, they are $1$-dimensional central extensions of K\"ahler Lie algebras via a symplectic form. 

We investigate the $5$-dimensional case, obtaining a classification of $\eta$-Einstein transversely K\"ahler almost contact metric Lie algebras of maximal rank. If the center is trivial, the structure is always $\alpha$-Sasakian. If the center is nontrivial and the K\"ahler quotient $\mathfrak{g}/\mathfrak{z(g)}$ is not abelian, the structure is quasi Sasakian; it is $\alpha$-Sasakian  on central extensions of K\"ahler-Einstein $4$-dimensional Lie algebras, and not conversely. Up to isomorphisms, the Heisenberg Lie algebra $\mathfrak{h}_5$ is the only $5$-dimensional Lie algebra admitting $\eta$-Einstein transversely K\"ahler structures which are not quasi Sasakian, including anti-quasi-Sasakian structures. In fact, we show that any $5$-dimensional anti-quasi-Sasakian Lie algebra is isomorphic to $\mathfrak{h}_5$.
\end{abstract}

\medskip
\textbf{MSC (2020):} 53C25, 53C15, 53D15.\smallskip

\textbf{Keywords:} almost contact metric Lie algebras, K\"ahler Lie algebras, quasi Sasakian structures,  anti-quasi-Sasakian structures, $\alpha$-Sasakian structures, $\eta$-Einstein metric}

\begin{document}
\maketitle 
\section*{Introduction} 
 
Almost contact metric manifolds are a fundamental class of odd-dimensional Riemannian manifolds $M^{2n+1}$, whose structural group is reducible to $U(n)\times 1$. They are characterized by a geometric structure $(\varphi,\xi,\eta,g)$, where $\varphi$ is a (1,1)-tensor field, $\xi$ a vector field, $\eta$ a 1-form, and $g$ a compatible Riemannian metric (\cite{Blair,S1,Y}). The presence of the distinguished direction given by the Reeb vector field $\xi$, makes it natural to investigate the transverse geometry with respect to the orbits $\xi$. Transversely K\"ahler almost contact metric manifolds are manifolds for which the structure $(\varphi,g)$ is projectable along the orbits of $\xi$ and the transverse geometry is K\"ahler (\cite{DG,C}). They include, for instance, quasi Sasakian manifolds, and their subclass of $\alpha$-Sasakian manifolds, Sasakian when $\alpha=1$, all well-known classes, extensively studied in the literature (\cite{S2, BG, BG1, Blair-qS, Kanemaki1}). For a quasi Sasakian manifold, considering a local Riemannian submersion $U\to U/\xi$, the K\"ahler base space turns out to be endowed with a closed $2$-form $\omega$ of type $(1,1)$, given by the projection of the $2$-form $d\eta$. In general $\omega$ can be degenerate. If $\eta$ has maximal rank, namely $\eta$ is a contact form, $\omega$ is nondegenerate. This is the case of $\alpha$-Sasakian manifolds, for which $d\eta$ is a multiple of the fundamental $2$-form $\Phi$ associated to the structure $(\varphi,\xi,\eta,g)$, so that $\omega$ is a multiple of the K\"ahler form of $U/\xi$. Recently, a new class of transversely K\"ahler almost contact metric manifolds has been introduced and investigated in \cite{DG, Dario}. These are the anti-quasi-Sasakian manifolds, for which the $2$-form $d\eta$ projects onto a closed $2$-form $\omega$ of type $(2,0)$ on the K\"ahler space of orbits $U/\xi$. In this case, if $\eta$ is a contact form, the K\"ahler space $U/\xi$ is endowed with a complex symplectic structure, which forces the dimension of $M$ to be of type $4n+1$.

Almost contact (metric) Lie algebras have been extensively studied, providing a framework for understanding left invariant geometric structures on Lie groups (\cite{Diatta,Diatta2,AF,CF,LL}). Within this framework, Sasakian Lie algebras have attracted particular attention as a distinguished subclass endowed with rich geometric and algebraic properties, and have been  studied in terms of their structure, curvature behavior, and classification. 
The study of Sasakian Lie algebras began with the observation that these algebras naturally split into two main types: those with a $1$-dimensional center generated by the Reeb vector, and those with a trivial center (\cite{AF}). In particular, in the nontrivial center case, the Lie algebras can be described as central extensions of K\"ahler Lie algebras via the cocycle defined by the K\"ahler form. In \cite{AF}, the authors classify all $5$-dimensional Sasakian Lie algebras. The larger class of $K$-contact Lie algebras is investigated in \cite{CF}, which also contains a full classification of the $5$-dimensional case.

In this work we investigate the class of transversely K\"ahler almost contact metric Lie algebras $(\mathfrak{g},\varphi,\xi,\eta,g)$, with particular attention to the case where the structure has maximal rank, i.e. $\eta\wedge(d\eta)^n\ne0$. Even in this case, the center of the Lie algebra is either trivial, or $1$-dimensional and generated by the Reeb vector. If the center is nontrivial, transversely K\"ahler almost contact metric Lie algebras $(\mathfrak{g},\varphi,\xi,\eta,g)$ of maximal rank are in one-to-one correspondence with K\"ahler Lie algebras $(\mathfrak{h},J,h,\omega)$ endowed with a symplectic 2-form $\omega$ (Theorem~\ref{theorem6}). The structure $(\varphi, \xi,\eta,g)$ is quasi Sasakian or anti-quasi-Sasakian if and only if the $2$-form $\omega$ is $J$-invariant or $J$-anti-invariant, respectively. In this framework, we investigate the curvature relations between  the K\"ahler Lie algebra $(\mathfrak{h},J,h)$ and the almost contact metric Lie algebra $(\mathfrak{g}, \varphi, \xi,\eta,g)$ obtained as central extension of $\mathfrak{h}$ via a symplectic form $\omega$ (Section \ref{section-curvature}). 

We examine with special attention $5$-dimensional Lie algebras, focusing on the $\eta$-Einstein condition. This means that the Ricci curvature tensor satisfies  \begin{equation}\label{eta-einstein}\mathrm{Ric}=\lambda g+\mu\eta\otimes\eta
\end{equation}
for $\lambda,\mu\in\mathbb{R}$. This is a weaker condition than the Einstein condition, corresponding to $\mu=0$, which can be quite restrictive for almost contact metric structures. For instance, the only simply connected Sasaki-Einstein homogeneous $5$-manifolds are $S^5$ and $S^2\times S^3$. This is consequence the classification of compact, simply connected, homogeneous contact $5$-manifolds in \cite{PV}; see also \cite[Chapter 11]{BG} for  Sasaki-Einstein structures on $S^2\times S^3$. The $\eta$-Einstein condition for Sasakian structures is equivalent to require K\"ahler-Einstein transverse geometry with respect to the orbits of $\xi$. This geometry has been investigated in \cite{BG1} and recently in \cite{AC} in the context of Sasakian Lie algebras.

In this paper, we first classify $5$-dimensional anti-quasi-Sasakian Lie algebras, showing that they are isomorphic to the Heisenberg Lie algebra $\mathfrak{h}_5$ (Theorem~\ref{theorem3}). This has been recently proved for nilpotent Lie algebras in \cite{Dario}. 
Here we obtain the result using the fact that any $5$-dimensional anti-quasi-Sasakian Lie algebra admits a contact Calabi-Yau structure, as defined in \cite{TV}. In \cite{AF2} the authors prove that, up to isomorphisms, $\mathfrak{h}_5$ is the only $5$-dimensional contact Calabi-Yau Lie algebra with $[\mathfrak{g},\mathfrak{g}]\ne \mathfrak{g}$. We show that this assumption can be actually removed, owing to the fact that any contact Calabi-Yau Lie algebra carries a Sasakian structure, and one takes into account the classification of $5$-dimensional Sasakian Lie algebras in \cite{AF}. It is already known that the anti-quasi-Sasakian structure on $\mathfrak{h}_5$ is $\eta$-Einstein, more precisely null $\eta$-Einstein, in the sense that it is transversely Ricci flat (\cite{DG}).

Considering the more general setting of $5$-dimensional transversely K\"ahler almost contact metric Lie algebras, we treat separately the cases of trivial and nontrivial center. In the case of trivial center, we show that every transversely K\"ahler Lie algebra of maximal rank is necessarily isomorphic to $\mathfrak{aff}(\mathbb{R}) \times \mathfrak{sl}(2,\mathbb{R})$, or $\mathfrak{aff}(\mathbb{R}) \times \mathfrak{su}(2)$, or to a non-unimodular solvable Lie algebra $\mathfrak{g}_0\simeq \mathbb{R}^2 \ltimes\mathfrak{h}_3$. Furthermore, the structure is necessarily $\alpha$-Sasakian (Theorem \ref{theorem13}). In fact, these are, up to isomorphisms, exactly all the Lie algebras with trivial center admitting a Sasakian structure, according to \cite{AF}. Among them, the only ones admitting $\eta$-Einstein structures are  $\mathfrak{sl}(2, \mathbb{R}) \times \mathfrak{aff}(\mathbb{R})$ and $\mathfrak{g}_0$ (Corollary \ref{corollary-eta-eistein}).

In the case of nontrivial center, all $5$-dimensional transversely K\"ahler almost contact metric Lie algebras can be obtained as $1$-dimensional central extensions of $4$-dimensional K\"ahler Lie algebras, via a symplectic form. Then, we use Ovando's classifications of both $4$-dimensional K\"ahler and symplectic Lie algebras (\cite{Ovandop} and \cite{Ovando}, respectively) and we classify $\eta$-Einstein transversely K\"ahler almost contact metric Lie algebras of maximal rank and with nontrivial center, as in Table \ref{table-eta-einstein} (Theorem \ref{theorem14}). In particular, in all the cases where the K\"ahler quotient $\mathfrak{g}/\mathfrak{z(g)}$ is not abelian, the almost contact metric structure is quasi Sasakian. All, and only all, $4$-dimensional K\"ahler-Einstein Lie algebras admit $\alpha$-Sasakian $\eta$-Einstein extensions; among them $\mathfrak{rr}'_{3,0}$ is the only non abelian K\"ahler-Einstein Lie algebras that also admits $\eta$-Einstein extensions which are quasi Sasakian non $\alpha$-Sasakian. The Heisenberg Lie algebra $\mathfrak{h}_5$, obtained as central extension of the abelian Lie algebra, is the only $5$-dimensional Lie algebra admitting $\eta$-Einstein structures which are not quasi Sasakian, including the already mentioned anti-quasi-Sasakian structures. In any case the $\eta$-Einstein structure is not Einstein. 

As a consequence, putting together the results regarding trivial and nontrivial center, we complete the classification of $5$-dimensional $\eta$-Einstein Sasakian Lie algebras in \cite{AF}, and more generally, we classify $5$-dimensional $\eta$-Einstein quasi Sasakian Lie algebras of maximal rank.

\medskip

\subsubsection*{Acknowledgements and declarations.} 
This work was partially carried out during the visit of Deniz Poyraz at the Department of Mathematics of the University of Bari  under the fellowship TÜBİTAK 2214-A, as part of her PhD programme.

Deniz Poyraz would like to express her sincere gratitude to Professor Dileo for her kind hospitality during her stay in Bari.

Giulia Dileo was partially supported by the National Center of HPC, Big Data and Quantum Computing, MUR CN00000013, CUP H93C22000450007, Spoke 10, and by PRIN 2022MWPMAB - \lq\lq Interactions between Geometric Structures and Function Theories\rq\rq. Giulia Dileo is member of INdAM - GNSAGA (Gruppo Nazionale per le Strutture Algebriche, Geometriche e le loro Applicazioni).

\section{Preliminaries}

In this section, we recall fundamental notions and structures on Lie algebras that will be used throughout the paper. Whenever one considers a Lie group $G$ with Lie algebra $\mathfrak{g}$, a structure defined by special tensors on $\mathfrak{g}$ determines in a natural manner a left invariant analogous geometric structure on the Lie group $G$. Since we will be mainly concerned with Lie algebras, we recall the notions on Lie algebras.\medskip

On a Lie algebra $\mathfrak{g}$ of even dimension $2n$, a \emph{symplectic structure} is given by a closed 2-form $\omega \in \Lambda^2(\mathfrak{g}^*)$ satisfying $\omega^n \neq 0$, namely, $\omega$ is nondegenerate, or of maximal rank.

 A \emph{complex structure} on a $2n$-dimensional real Lie algebra $\mathfrak{g}$ is given by an endomorphism $J:\mathfrak{g}\to\mathfrak{g}$ such that $J^2=-I$ and $[J,J]=0$, where $[J,J]$ is the Nijenhuis tensor defined by
\[[J,J](X,Y)=[JX,JY]-[X,Y]-J[JX,Y]-J[X,JY]\]
for any $X,Y\in\mathfrak{g}$.
A pair $(J,g)$ is said to define a \emph{K\"ahler structure} on $\mathfrak{g}$ whenever $J$ is a complex structure and $g$ is a positive-definite inner product compatible with $J$, that is, $g(JX,JY)=g(X,Y)$ for all $X,Y\in\mathfrak{g}$, and such that the associated 2-form $\Omega(X,Y)=g(X,JY)$ is closed and nondegenerate.
In this case, $(\mathfrak{g},J,g)$ is called a \emph{K\"ahler Lie algebra}.
\medskip

Considering now an odd-dimensional Lie algebra $\mathfrak{g}$, if $\dim\mathfrak{g}=2n+1$, a \emph{contact structure} on $\mathfrak{g}$ is a $1$-form $\eta\in\mathfrak{g}^*$ such that $\eta\wedge (d\eta)^n\ne0$. The \emph{Reeb vector} associated to the contact structure $\eta$ is the unique vector $\xi\in\mathfrak{g}$ such that $\eta(\xi)=1$ and $d\eta(\xi,\cdot)=0$.

 An \emph{almost contact metric Lie algebra} is a $(2n+1)$-dimensional Lie algebra $(\mathfrak{g} ,\varphi,\xi,\eta,g)$ where $\varphi\in End(\mathfrak{g})$, $\xi\in\mathfrak{g}$ (usually also called Reeb
vector), $\eta\in \mathfrak{g}^*$, and $g$ is a positive-definite inner product on $\mathfrak{g}$ such that
\[
   \varphi^2 = -I+\eta \otimes\xi, \qquad \eta(\xi)=1,\] %\label{d1.1}
  \[ g(\varphi X,\varphi Y)=g(X,Y)-\eta(X)\eta(Y)\qquad \forall X,Y \in \mathfrak{g}.\]
It follows that ${\varphi(\xi)=0}$, ${\eta \circ \varphi =0}$,
$\eta=g(\xi,\cdot)$ and $\|\xi\|=1$. The Lie algebra $\mathfrak{g}$ naturally splits into the direct sum of a horizontal and a vertical subspace:
\[
\mathfrak{g} = \mathfrak{h} \oplus \langle \xi \rangle,
\]
where the \emph{horizontal subspace} is defined by 
\(\mathfrak{h} := \ker\eta = \im \varphi\), and for which the following properties hold:
\[
\mathfrak{h} = \langle \xi \rangle^\perp, 
\qquad
\varphi^2|_\mathfrak{h} = -I,
\qquad
g(\varphi X, \varphi Y) = g(X,Y) \quad \forall X,Y \in \mathfrak{h}.
\]
The  fundamental $2$-form associated to an almost contact metric structure is defined by 
$${\Phi(X,Y)=g(X,\varphi Y)}.$$ If $d\eta=2\Phi$, then the almost contact metric structure is called a \emph{contact metric structure},  and in this case $\eta$ is a contact form. If, furthermore, the tensor $\mathcal{L}_{\xi}\varphi=\operatorname{ad}_\xi\circ\varphi-\varphi\circ\operatorname{ad}_\xi$ vanishes, the contact metric Lie algebra is said to be \emph{$K$-contact}. On a Lie group $G$ with Lie algrebra $\mathfrak{g}$ and left invariant contact metric structure $(\varphi,\xi,\eta,g)$, the vanishing of the Lie derivative $\mathcal{L}_{\xi}\varphi$ is equivalent to the fact that $\xi$ is a Killing vector field. Throughout the paper, for a tensor $t$ on an almost contact metric Lie algebra $(\mathfrak{g},\varphi,\xi,\eta,g)$, we will denote by $\mathcal{L}_\xi t$ the operator given by the evaluation at the identity of the Lie derivative of the corresponding left invariant tensor field defined on a Lie group $G$ with Lie algebra $\mathfrak{g}$.
\medskip

An almost contact metric Lie algebra is said to be:
\begin{itemize}
    \item \emph{cok\"ahler} if $N_\varphi = 0$, $d\eta = 0$, and $d\Phi = 0$,
    \item \emph{$\alpha$-Sasakian} if $N_\varphi = 0$ and $d\eta = 2\alpha\Phi$, $\alpha\in\mathbb{R}^*$, which is \emph{Sasakian} for $\alpha=1$,
    \item \emph{quasi Sasakian} if $N_\varphi = 0$ and $d\Phi = 0$,
\end{itemize}
 where $N_{\varphi}:=[\varphi,\varphi]+d\eta\otimes \xi$, which can be explicitly written as
 \begin{eqnarray*}%\label{denk1.1}
N_{\varphi} (X,Y) = [ \varphi X,\varphi Y]+\varphi ^2 [X,Y]-\varphi [X,\varphi Y]-\varphi [\varphi X,Y]+d\eta(X,Y)\xi
\end{eqnarray*}
 for any $X,Y\in \mathfrak{g}$. When $N_\varphi=0$, as in the above three cases, one says that the structure is \emph{normal}.
\medskip

Two almost contact metric Lie algebras $(\mathfrak{g}_1,\varphi_1,\xi_1,\eta_1,g_1)$ and $(\mathfrak{g}_2,\varphi_2,\xi_2,\eta_2,g_2)$ are said to be \emph{isomorphic} if there exists a Lie algebra isomorphism 
$f:\mathfrak{g}_1 \to \mathfrak{g}_2$ satisfying the following conditions:
\[
\begin{array}{l@{\hskip 1,2cm}l} 
     \text{(a)}\; f :(\mathfrak{g}_1,g_1)\rightarrow (\mathfrak{g}_2,g_2)~ \text{ is an isometry,} & \text{(b)}\; f(\xi_1)=\xi_2, \\
    \text{(c)}\; f \circ \varphi_1=\varphi_2\circ f, &\text{(d)}\;\eta_1=\eta_2 \circ f.
\end{array}
\]
In \cite{AF}, the authors classify $5$-dimensional Sasakian Lie algebras. 
Below, we summarize the main results obtained in their work:
\begin{theorem}\label{theo-Sasaki-nontrivial} \cite{AF} 
Any $5$-dimensional Sasakian Lie algebra $\mathfrak{g}$ with nontrivial center is isomorphic to one of the following Lie algebras: 
\begin{eqnarray*}
&&\mathfrak{g}_1=(0,0,0,0,e^{12}+e^{34})\simeq {\mathfrak{h}_5;}\\
&&\mathfrak{g}_2=(0,-e^{12},0,0,e^{12}+e^{34})\simeq {\mathfrak{aff}(\mathbb
R) \times \mathfrak{h}_3;}\\% \mathfrak{g}_2 \simeq \mathfrak{r}\mathfrak{r}_{3,0} not \eta Einstein
&&\mathfrak{g}_3=(0,-e^{13},e^{12},0,e^{14}+e^{23})\simeq {\mathbb{R}\ltimes(\mathfrak{h}_3 \times \mathbb{R});} \\ % \mathfrak{g}_3 \simeq \mathfrak{r}\mathfrak{r}'_{3,0} \eta Einstein
&&\mathfrak{g}_4=(0,-e^{12},0,-e^{34},e^{12}+e^{34})\simeq { \mathfrak{aff}(\mathbb{R}) \times \mathfrak{aff}(\mathbb{R}) \times \mathbb{R};} \\ % \mathfrak{g}_4 \simeq  \mathfrak{r}_2\mathfrak{r}_2
&&\mathfrak{g}_5=(\frac{1}{2}e^{14},\frac{1}{2}e^{24},-e^{12}+e^{34},0,e^{12}-e^{34})\simeq {\mathbb{R} \times (\mathbb{R}\ltimes\mathfrak{h}_3);} \\ % \mathfrak{g}_5 \simeq \mathfrak{d}_{4,1/2}
&&\mathfrak{g}_6=(2e^{14},-e^{24},-e^{12}+e^{34},0,e^{23})\simeq {\mathbb{R}\ltimes\mathfrak{n}_4;} \\ % \mathfrak{g}_6 \simeq \mathfrak{d}_{4,2}
&&{\mathfrak{g}^\delta_7= (\frac{\delta}{2}e^{14}+e^{24},-e^{14}+\frac{\delta}{2}e^{24},-e^{12}+\delta e^{34},0,e^{12}-\delta e^{34})\simeq {\mathbb{R} \times (\mathbb{R}\ltimes\mathfrak{h}_3), \delta>0;}} \\ % \mathfrak{g}^\delta_7 \simeq \mathfrak{d}'_{4,\delta}
&&\mathfrak{g}^\delta_8=(e^{14},\delta e^{34},-\delta e^{24},0,e^{14}+e^{23})\simeq {\mathbb{R}\ltimes_\delta (\mathfrak{h}_3 \times \mathbb{R}), \delta>0}.%\mathfrak{g}^\delta_8 \simeq \mathfrak{r}'_{4,0,\delta}
\end{eqnarray*}

 \end{theorem}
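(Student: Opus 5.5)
Since the center is nontrivial and a Sasakian structure has $\eta$ of maximal rank, I would first show that $\mathfrak{z}(\mathfrak{g})=\langle\xi\rangle$. If $Z$ is central, then $d\eta(Z,\cdot)=-\eta([Z,\cdot])=0$, so $Z$ lies in the kernel of $d\eta$, and that kernel is $\langle\xi\rangle$ because $\eta\wedge(d\eta)^2\neq0$. Then $\mathfrak{h}=\mathfrak{g}/\langle\xi\rangle$ inherits a $4$-dimensional Lie bracket. I would identify $\mathfrak{h}$ with $\ker\eta$ and take the projected bracket $[X,Y]_{\mathfrak h}=[X,Y]-\eta([X,Y])\xi$. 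On it we get $J=\varphi|_{\ker\eta}$ and $h=g|_{\ker\eta}$.

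Next I would check that $(\mathfrak{h},J,h)$ is Kähler and that $\mathfrak{g}$ is its central extension by the cocycle $\omega=d\eta|_{\mathfrak h}=2\Phi|_{\mathfrak h}$. The integrability of $J$ comes from $N_\varphi=0$: since $\operatorname{ad}_\xi=0$, the horizontal part of $N_\varphi$ is exactly the Nijenhuis tensor of $J$ for the quotient bracket. For closedness, $d\Phi=\tfrac12 d^2\eta=0$ on $\mathfrak g$, and the Kähler form is the pullback of $\Phi$. Conversely, the bracket is $[X,Y]=[X,Y]_{\mathfrak h}-\omega(X,Y)\xi$, so $\mathfrak g\cong\mathbb{R}\xi\oplus_\omega\mathfrak h$, with the metric and $\varphi$ determined by $(J,h)$. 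So classifying $5$-dimensional Sasakian Lie algebras with nontrivial center is the same as classifying $4$-dimensional Kähler Lie algebras $(\mathfrak h,J,h)$, up to holomorphic isometry and rescaling, together with the central extension by twice the Kähler form.

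The third step uses Ovando's classification of $4$-dimensional Kähler Lie algebras, which lists $\mathbb{R}^4$, $\mathfrak{rh}_3$, $\mathfrak{rr}_{3,0}$, $\mathfrak{rr}'_{3,0}$, $\mathfrak{r}_2\mathfrak{r}_2$, $\mathfrak{r}'_2$, $\mathfrak{d}_{4,1/2}$, $\mathfrak{d}_{4,2}$, $\mathfrak{d}'_{4,\delta}$, $\mathfrak{r}'_{4,0,\delta}$, and so on, with their Kähler pairs. For each case I would write the Kähler form in an adapted basis $e^1,\dots,e^4$ and compute the extension $de^5=\omega$. This should produce $\mathfrak g_1,\dots,\mathfrak g_8$: abelian gives $\mathfrak h_5$, $\mathfrak{rr}_{3,0}$ gives $\mathfrak g_2$, $\mathfrak{rr}'_{3,0}$ gives $\mathfrak g_3$, $\mathfrak r_2\mathfrak r_2$ gives $\mathfrak g_4$, and $\mathfrak d_{4,1/2}$, $\mathfrak d_{4,2}$, $\mathfrak d'_{4,\delta}$, $\mathfrak r'_{4,0,\delta}$ give $\mathfrak g_5$ through $\mathfrak g_8$. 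Finally I would identify each extension with the named algebra, for example $\mathbb R\ltimes\mathfrak n_4$, by exhibiting an explicit change of basis.

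The main obstacle is the case analysis. Several $4$-dimensional Lie algebras carry Kähler forms that are not exact, or carry families of Kähler structures. I would have to show that the central extensions either coincide up to isomorphism, possibly after the $\mathbb{R}^*$ rescaling of $\xi$ and $\omega$, or fail the Sasakian requirement that the extension cocycle be nondegenerate and equal to the Kähler form. Some Kähler algebras, such as $\mathfrak{rh}_3$, $\mathfrak{r}_2'$ and $\mathfrak{r}_{4,\cdot}$, do not appear in the list. For these I would check that their central extension is isomorphic to one already listed, or that the center of the extension becomes larger, contradicting the first step. Handling the parameter families in a uniform normal form is where most of the work lies.
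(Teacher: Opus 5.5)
Note first that the paper does not prove this statement; it quotes it from \cite{AF}. Your route is the one used there, and the paper reproduces its ingredients in Theorem~\ref{theorem6} (specialized to $\omega=2\Omega$), Proposition~\ref{proposition-isom} and Table~\ref{table1}. Your first two steps, $\mathfrak z(\mathfrak g)=\mathbb R\xi$ and $\mathfrak g$ being the central extension of the K\"ahler quotient by $2\Omega$, are correct.

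The gap is in the third step. The $4$-dimensional Lie algebras admitting a positive definite K\"ahler structure are exactly $\mathbb R^4$ and the seven algebras of Table~\ref{table1}. $\mathfrak{rh}_3$ and $\mathfrak r'_2$ are not among them. $\mathfrak{rh}_3$ is nilpotent and non-abelian, so it is not K\"ahler by Benson--Gordon. $\mathfrak r'_2$ carries only neutral-signature pseudo-K\"ahler metrics. You include these algebras and then plan to dispose of them by showing that their extension is isomorphic to a listed one, or that its center becomes larger. Neither can happen. If $\Omega$ is nondegenerate, the center of $\mathbb R\xi\oplus_{2\Omega}\mathfrak h$ is exactly $\mathbb R\xi$: if $X+t\xi$ is central, then $\Omega(X,\cdot)=0$, so $X=0$. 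Hence $\mathfrak g/\mathfrak z(\mathfrak g)\cong\mathfrak h$, and this quotient is a Lie algebra isomorphism invariant, so extensions of non-isomorphic K\"ahler Lie algebras are never isomorphic. A K\"ahler algebra outside the list would therefore be a genuine counterexample. The only way to exclude a candidate is to show it has no K\"ahler structure, which is exactly what the positive definite part of Ovando's classification provides; you need to invoke that list correctly rather than the pseudo-K\"ahler or symplectic one.

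With the correct list, the remaining case work concerns cohomology, not centers. The Lie algebra $\mathfrak g$ depends only on the line through $[\Omega]\in H^2(\mathfrak h)$ modulo $\operatorname{Aut}(\mathfrak h)$, because adding $d\theta$ to the cocycle is absorbed by $e^5\mapsto e^5+\theta$. The cases then go as follows.
\begin{itemize}
\item On $\mathfrak r_2\mathfrak r_2$, $\mathfrak d_{4,1/2}$ and $\mathfrak d'_{4,\delta}$ the K\"ahler form is exact, so $\mathfrak g\cong\mathfrak h\times\mathbb R$.
\item On $\mathfrak{rr}_{3,0}$, $\mathfrak d_{4,2}$ and $\mathfrak r'_{4,0,\delta}$ one summand of $\Omega$ is exact, and the other is normalized by rescaling $\xi$. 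This rescaling also absorbs the sign distinguishing $J_1$ from $J_2$.
\item On $\mathfrak{rr}'_{3,0}$ both summands survive, but they are normalized by the automorphisms rescaling $e_4$ and $\langle e_2,e_3\rangle$.
\item On $\mathbb R^4$ every symplectic form gives $\mathfrak h_5$.
\end{itemize}
This yields exactly one algebra per K\"ahler quotient, namely $\mathfrak g_1,\dots,\mathfrak g_8^\delta$.
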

  The above notation for the Lie algebras is the same we adopt throughout the paper. Denoting by  ${\lbrace e_1,e_2,e_3,e_4,e_5 \rbrace}$ a basis of $\mathfrak{g}$ and by ${\lbrace e^1,e^2,e^3,e^4,e^5 \rbrace}$ its dual basis in $\mathfrak{g}^*$, the notation    ${\mathfrak{g}_2=(0,-e^{12},0,0,e^{12}+e^{34})}$ means that
\[
       de^1=de^3=de^4=0,\quad de^2=-e^{12},\quad 
       de^5=e^{12}+e^{34},  
 \]
where ${ e^{ij}:=e^i\wedge e^j}$ for all $i,j$.
 \begin{theorem}\label{theo-Sasaki-trivial} \cite{AF} 
If a $5$-dimensional Sasakian Lie algebra $\mathfrak{g}$ has trivial center, then it is isomorphic to one of the following Lie algebras: the direct products  $\mathfrak{sl}(2,\mathbb{R})\times \mathfrak{aff}(\mathbb{R})$, $\mathfrak{su}(2)\times \mathfrak{aff}(\mathbb{R})$, or a non-unimodular solvable Lie algebra $\mathfrak{g}_0 \simeq \mathbb{R}^2 \ltimes\mathfrak{h}_3$.
\end{theorem}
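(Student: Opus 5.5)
This is a cited result from \cite{AF}, but I would reprove it by reducing the Sasakian Lie algebra with trivial center to a contact structure with a well-behaved Reeb derivation, and then classifying.

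\emph{Step 1: structure of $\operatorname{ad}_\xi$.} Since the structure is Sasakian, it is $K$-contact, so $\operatorname{ad}_\xi$ is skew-symmetric with respect to $g$ and commutes with $\varphi$. Moreover $\operatorname{ad}_\xi\xi=0$ and $\operatorname{ad}_\xi$ preserves $\mathfrak{h}=\ker\eta$. Indeed, $\eta([\xi,X])=-d\eta(\xi,X)=0$.

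\emph{Step 2: reduction to a $3$-dimensional Lie algebra.} Because the center is trivial, $\operatorname{ad}_\xi\neq 0$ on the $4$-dimensional space $\mathfrak{h}$. There it is a nonzero skew endomorphism commuting with $\varphi$, so it lies in $\mathfrak{u}(2)$. The kernel $\mathfrak{k}=\ker\operatorname{ad}_\xi$ is a subalgebra containing $\xi$, and its dimension is $1$, $3$ or $5$. If $\dim\mathfrak{k}=5$, then $\xi$ is central, which is excluded. If $\dim\mathfrak{k}=1$, then $\operatorname{ad}_\xi|_{\mathfrak{h}}$ is invertible. I would exclude this case using $d\eta(\operatorname{ad}_\xi X,Y)+d\eta(X,\operatorname{ad}_\xi Y)=0$ together with the Jacobi identity and $d^2=0$. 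The aim is to show that an invertible derivation-compatible rotation forces $\xi$ to be central modulo a nilpotent ideal, which gives a contradiction. This step requires care, and it is where I expect the main computation to lie. In the remaining case $\dim\mathfrak{k}=3$, the kernel $\mathfrak{k}$ is a $\varphi$-invariant subalgebra of the form $\langle\xi\rangle\oplus\mathfrak{k}_0$. Its orthogonal complement $\mathfrak{p}=\mathfrak{k}^\perp\cap\mathfrak{h}$ is a complex line on which $\operatorname{ad}_\xi$ acts as $c\varphi$ with $c\neq 0$.

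\emph{Step 3: solving the structure equations.} Take an orthonormal $\varphi$-adapted basis $e_1,e_2=\varphi e_1\in\mathfrak{k}_0$, $e_3,e_4=\varphi e_3\in\mathfrak{p}$, and $e_5=\xi$. Impose $de^5=2(e^{12}+e^{34})$, normality, $d^2=0$, and $[\xi,e_3]=ce_4$, $[\xi,e_4]=-ce_3$. The resulting equations reduce to a small number of parameters. Note that $\mathfrak{k}$ is $3$-dimensional Sasakian with $\xi$ central in it, so $\mathfrak{k}\cong\mathfrak{h}_3$ modulo scaling. The bracket $[e_3,e_4]$ has $\xi$-component $-2$ together with a component in $\mathfrak{k}_0$, and the brackets $[\mathfrak{k}_0,\mathfrak{p}]\subset\mathfrak{p}$ are constrained by integrability.

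\emph{Step 4: identification.} I would then identify the resulting Lie algebras by computing invariants: the Killing form, the derived algebra and the unimodularity trace. This should give $\mathfrak{sl}(2,\mathbb{R})\times\mathfrak{aff}(\mathbb{R})$ or $\mathfrak{su}(2)\times\mathfrak{aff}(\mathbb{R})$, according to the sign of the semisimple part, or else the solvable non-unimodular case $\mathfrak{g}_0\simeq\mathbb{R}^2\ltimes\mathfrak{h}_3$ when $[\mathfrak{p},\mathfrak{p}]$ has no $\mathfrak{k}_0$-component.

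Excluding $\dim\mathfrak{k}=1$ and organizing the parameter cases in Steps 3 and 4 are the expected obstacles.
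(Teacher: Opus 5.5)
Your overall plan matches the paper's treatment, namely the proof of Theorem~\ref{theorem13}, which follows \cite{AF}. Your case $\dim\mathfrak{k}=1$ is exactly the case $\mathfrak{g}'=\mathfrak{g}$ there, and $\dim\mathfrak{k}=3$ is the adapted-basis computation. However, for $\dim\mathfrak{k}=1$ you give only a hoped-for conclusion, and it points in the wrong direction. On $\mathfrak{sl}(2,\mathbb{R})\ltimes\mathbb{R}^2$, an elliptic $\xi\in\mathfrak{sl}(2,\mathbb{R})$ has $\ker\ad_\xi=\mathbb{R}\xi$ and a skew-symmetrizable $\ad_\xi$, yet it is not central modulo the nilradical. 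What fails there is that the forced $1$-form, with $\ker\eta=\im\ad_\xi$, is not contact. The missing idea is that $\im\ad_\xi=\ker\eta$ makes $\mathfrak{g}$ perfect. Indeed, $\ker\eta\subseteq\mathfrak{g}'$, and since $d\eta|_{\ker\eta}\neq0$, some bracket of horizontal vectors has nonzero $\xi$-component, so $\xi\in\mathfrak{g}'$. The paper then quotes Diatta (the only such contact algebra is $\mathfrak{sl}(2,\mathbb{R})\ltimes\mathbb{R}^2$) and \cite{AF} (it carries no Sasakian structure).

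You can also close this case directly. A nonzero ideal $I\subseteq\ker\eta$ is impossible, since $d\eta(u,X)=-\eta([u,X])=0$ for $u\in I$. The radical $\mathfrak{r}$ is nonzero, because there are no $5$-dimensional semisimple algebras, and it is $\ad_\xi$-invariant. If $\xi\notin\mathfrak{r}$, then $\ad_\xi|_{\mathfrak{r}}$ is injective, so $\mathfrak{r}=\ad_\xi(\mathfrak{r})\subseteq\im\ad_\xi=\ker\eta$, a contradiction. If $\xi\in\mathfrak{r}$, perfectness gives $\mathfrak{r}=[\mathfrak{g},\mathfrak{r}]$, which lies in the nilradical. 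Then $\ad_\xi$ is nilpotent and skew-symmetric, hence zero, contradicting trivial center.

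Second, your claim in Step 3 that $\mathfrak{k}\cong\mathfrak{h}_3$ is false, and building on it would make your system inconsistent, returning no algebras at all. $\mathfrak{k}$ is a $3$-dimensional Sasakian algebra with central Reeb vector, so it is $\mathfrak{h}_3$ or $\mathfrak{aff}(\mathbb{R})\times\mathbb{R}$, and only the latter occurs. Since $[\xi,e_i]=0$, the maps $\ad_{e_1}|_{\mathfrak{p}}$ and $\ad_{e_2}|_{\mathfrak{p}}$ commute with $\ad_\xi|_{\mathfrak{p}}=c\,\varphi|_{\mathfrak{p}}$, hence with each other, so $\ad_{[e_1,e_2]}|_{\mathfrak{p}}=0$. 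If $[e_1,e_2]=t\xi$, then $t\neq0$ by the contact condition, and $\ad_{[e_1,e_2]}|_{\mathfrak{p}}=tc\,\varphi|_{\mathfrak{p}}\neq0$. This is the relation $a_1c_3-b_1f_4+k_1=0$ in the paper's Case A. For example, in $\mathfrak{sl}(2,\mathbb{R})\times\mathfrak{aff}(\mathbb{R})$ the Reeb vector is elliptic in the first factor and $\mathfrak{k}=\mathbb{R}\xi\oplus\mathfrak{aff}(\mathbb{R})$.

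Your identification criterion in Step 4 is also reversed. If $[\mathfrak{p},\mathfrak{p}]\subseteq\mathbb{R}\xi$, then $\mathfrak{p}\oplus\mathbb{R}\xi$ is a simple ideal, and you get $\mathfrak{sl}(2,\mathbb{R})\times\mathfrak{aff}(\mathbb{R})$ or $\mathfrak{su}(2)\times\mathfrak{aff}(\mathbb{R})$ (cases (A1), (A2), (B1), (B2)). The algebra $\mathfrak{g}_0$ arises exactly when $[e_3,e_4]$ has a nonzero $\mathfrak{k}_0$-component (cases (A3), (A4), (B3), (B4)).
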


 \section{Anti-quasi-Sasakian Lie algebras}\label{sectionaqS}
In this section, we study $5$-dimensional anti-quasi-Sasakian Lie algebras, showing that they are always isomorphic to the Heisenberg Lie algebra $\mathfrak{h}_5$ (Theorem \ref{theorem3}). As already remarked in the Introduction, anti-quasi-Sasakian structures are a special class of transversely K\"ahler almost contact metric structures, introduced in \cite{DG}. As we will see in the next Sections, Theorem \ref{theorem3} is also consequence of the results concerning $5$-dimensional transversely K\"ahler Lie algebras (Theorems \ref{theorem6} and \ref{theorem13}, Remark \ref{remark aqS}). We present in this section a specific proof of this result, which directly makes use of the classification of $5$-dimensional Sasakian Lie algebras, in order to highlight the strict connection between anti-quasi-Sasakian, contact Calabi-Yau, and Sasakian structures.
\medskip

To proceed, we first recall the relevant definitions and concepts from \cite{DG}. \\

An almost contact metric Lie algebra $(\mathfrak{g}, \varphi, \xi, \eta, g)$ is said to be 
\emph{anti-quasi-Sasakian} (aqS for short) if
\[
d\Phi = 0, \qquad N_\varphi = 2\, d\eta \otimes \xi.
\]
The second equation, referred to as the \emph{anti-normal condition}, implies that 
$d\eta(\xi, \cdot) = 0$, or equivalently $\mathcal{L}_\xi \eta = 0$, and that $d\eta$ is $\varphi$-anti-invariant, namely 
\[
d\eta(\varphi X, \varphi Y) = -\, d\eta(X, Y) \qquad \forall\, X,Y \in \mathfrak{g}.
\]
Furthermore, an aqS structure satisfies
\[
 \mathcal{L}_\xi d\eta = 0, \qquad\mathcal{L}_\xi \varphi = 0, \qquad \mathcal{L}_\xi g = 0.
\]

A \emph{double aqS–Sasakian structure} on a Lie algebra $\mathfrak{g}$ 
is defined by three almost contact metric structures $(\varphi_i, \xi, \eta, g)$, 
$i = 1, 2, 3$, satisfying
\[
\varphi_i \varphi_j = \varphi_k = -\, \varphi_j \varphi_i,
\]
for every even permutation $(i, j, k)$ of $(1, 2, 3)$, and such that
\[
d\Phi_1 = 0, \qquad d\Phi_2 = 0, \qquad d\eta = 2\Phi_3.
\]
For a double aqS–Sasakian Lie algebra, one has $\dim \mathfrak{g} = 4n + 1$. 
One can also show that in this case, the first two structures $(\varphi_1, \xi, \eta, g)$ and $(\varphi_2, \xi, \eta, g)$ 
are anti-quasi-Sasakian, while $(\varphi_3, \xi, \eta, g)$ is Sasakian. The Heisenberg Lie algebra of dimension $4n+1$ can be endowed with a double aqS-Sasakian structure. We recall it in the $5$-dimensional case:

\begin{example}
 Let $\mathfrak{g}$ be the $5$-dimensional Lie algebra with basis $\lbrace e_1, e_2, e_3, e_4,\xi \rbrace$ and non-vanishing commutators 
 $${[e_1,e_4]=[e_2,e_3]=2 \xi}.$$ 
 Consider three 
        almost contact metric structures ${(\varphi_i,\xi,\eta,g)}$, $i=1,2,3$, where $g$ is the inner product with respect to which the basis is orthonormal, $\eta$ is the dual form of $\xi$, and $\varphi_ i$ is defined by 
        $${\varphi_i =e^1\otimes e_{i+1}-e^{i+1}\otimes e_{1}+e^{j+1}\otimes e_{k+1}-e^{k+1}\otimes e_{j+1}},$$
        where $e^l$ $(l=1,..,4)$ is the dual $1$-form of $e_l$, and $(i,j,k)$ is even permutation of $(1,2,3)$. Explicitly, ${\varphi_i(\xi)=0}$ and 
    \begin{eqnarray*} 
\begin{aligned}
   \varphi_1e_1&=e_2&\varphi_1e_2&=-e_1&\varphi_1e_3&=e_4&\varphi_1e_4&=-e_3\\
   \varphi_2e_1&=e_3&\varphi_2e_2&=-e_4&\varphi_2e_3&=-e_1&\varphi_2e_4&=e_2\\
  \varphi_3e_1&=e_4&\varphi_3e_2&=e_3&\varphi_3e_3&=-e_2&\varphi_3e_4&=-e_1.
\end{aligned}
\end{eqnarray*}
Clearly, $${\varphi_i\varphi_j=\varphi_k=-\varphi_j\varphi_i}.$$
The fundamental $2$-forms are $${\Phi_i=-(e^1\wedge e^{i+1}+e^{j+1}\wedge e^{k+1}).}$$
The $1$-forms $e^l$ are closed for every $l=1,..,4$, and then ${d\Phi_i=0}$ for every $i=1,2,3$. Furthermore, 
$d\eta=-2(e^1\wedge e^4+e^2\wedge e^3)=2\Phi_3.$
Thus, $(\mathfrak{g},\varphi_i,\xi,\eta,g)$, $i=1,2,3$, is a {double aqS-Sasakian} Lie algebra. In particular, the Sasakian Lie algebra $(\mathfrak{g},\varphi_3,\xi,\eta,g)$ is isomorphic to the Lie algebra
\[\mathfrak{g}_1=(0,0,0,0,e^{12}+e^{34})\simeq {\mathfrak{h}_5}\]
of Theorem \ref{theo-Sasaki-nontrivial}, where the Sasakian structure $(\varphi,\xi,\eta,g)$ is defined by 
\[\xi=e_5,\quad \eta=e^5,\quad \varphi\xi=0,\quad \varphi e_1=-e_2,\quad\varphi e_2=e_1,\quad\varphi e_3=-e_4,\quad\varphi e_4=e_3,\quad\]
\[ 2g(\cdot,\cdot)= d e^5(\varphi\cdot,\cdot)+2e^5\otimes e^5(\cdot,\cdot).\]
\end{example}

As remarked in \cite{DG}, in dimension $5$, double aqS-Sasakian structures are in one-to-one correspondence with contact Calabi-Yau structures as defined in \cite{TV}. A $5$-dimensional Lie algebra $\mathfrak{g}$ is \emph{contact Calabi-Yau} if it is endowed with a Sasakian structure $(\varphi,\xi,\eta,g)$ and a non-vanishing basic complex $(2,0)$-form $\epsilon$ on $\mathfrak{h}=\operatorname{ker}\eta$ such that  \[ d\epsilon = 0,\qquad
\epsilon \wedge \bar{\epsilon} = \frac{1}{2}(d\eta)^2.
\]
Here the form $\epsilon$ is basic if $\iota_\xi\epsilon=0$ and $\mathcal{L}_\xi\epsilon=0$. Now, if $(\mathfrak{g},\varphi_i,\xi,\eta,g)$ is a double aqS-Sasakian structure, one has a Sasakian structure $(\varphi_3,\xi,\eta,g)$ and putting $\omega_i=-\Phi_i$, $i=1,2$, the complex form $\epsilon=\omega_1+i\omega_2$ provides a contact Calabi-Yau structure. Conversely, given a contact Calabi-Yau Lie algebra $(\mathfrak{g},\varphi,\xi,\eta,g,\epsilon)$, a double aqS-Sasakian  structure is obtained by $\omega_1=\Re(\epsilon)$, $\omega_2=\Im(\epsilon)$ and $\omega_3=-\frac12d\eta$.  Before our main result, we show the following. 
\begin{lemma}\label{lemma-calabi-yau}
Any $5$-dimensional contact Calabi-Yau Lie algebra is isomorphic to the Heisenberg Lie algebra $\mathfrak{h}_5$.
\end{lemma}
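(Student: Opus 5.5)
The plan is to reduce the statement to the result of \cite{AF2} quoted in the Introduction: up to isomorphism, $\mathfrak{h}_5$ is the only $5$-dimensional contact Calabi-Yau Lie algebra with $[\mathfrak{g},\mathfrak{g}]\neq\mathfrak{g}$. So it suffices to show that a $5$-dimensional contact Calabi-Yau Lie algebra can never be perfect. For this we only use that, by definition, a contact Calabi-Yau Lie algebra $(\mathfrak{g},\varphi,\xi,\eta,g,\epsilon)$ carries a Sasakian structure $(\varphi,\xi,\eta,g)$. Hence $\mathfrak{g}$ is isomorphic to one of the Lie algebras in Theorem \ref{theo-Sasaki-nontrivial} or Theorem \ref{theo-Sasaki-trivial}.

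Next we check, case by case, that none of these Lie algebras is perfect. Since $[\mathfrak{g},\mathfrak{g}]^{\perp}$ in $\mathfrak{g}^*$ is the space of closed $1$-forms, it is enough to exhibit a nonzero closed $1$-form, equivalently a nonzero abelianization.
\begin{itemize}
\item In the nontrivial center case, the structure equations of $\mathfrak{g}_1,\dots,\mathfrak{g}_6$, $\mathfrak{g}^\delta_7$ and $\mathfrak{g}^\delta_8$ show directly that $de^4=0$ for each of them. For $\mathfrak{g}_1,\mathfrak{g}_2,\mathfrak{g}_3$ one also has $de^1=0$.
\item In the trivial center case, $\mathfrak{sl}(2,\mathbb{R})\times\mathfrak{aff}(\mathbb{R})$ and $\mathfrak{su}(2)\times\mathfrak{aff}(\mathbb{R})$ both surject onto $\mathfrak{aff}(\mathbb{R})$, whose abelianization is $1$-dimensional. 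So their derived algebras are $\mathfrak{sl}(2,\mathbb{R})\times\mathbb{R}$ and $\mathfrak{su}(2)\times\mathbb{R}$ respectively, which are proper.
\item The algebra $\mathfrak{g}_0\simeq\mathbb{R}^2\ltimes\mathfrak{h}_3$ is solvable, so $[\mathfrak{g}_0,\mathfrak{g}_0]\neq\mathfrak{g}_0$.
\end{itemize}
Thus every $5$-dimensional Sasakian Lie algebra satisfies $[\mathfrak{g},\mathfrak{g}]\neq\mathfrak{g}$. In particular this holds for every contact Calabi-Yau one, and \cite{AF2} then yields $\mathfrak{g}\simeq\mathfrak{h}_5$.

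The only delicate point is the bookkeeping that the classification of \cite{AF} really excludes perfect algebras. For instance, $\mathfrak{sl}(2,\mathbb{R})\ltimes\mathbb{R}^2$ is a perfect $5$-dimensional Lie algebra, but it does not occur in the list, which is consistent with its Levi factor acting irreducibly with no room for a Reeb direction. Since the lists in Theorems \ref{theo-Sasaki-nontrivial} and \ref{theo-Sasaki-trivial} are assumed, this verification is routine.

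If one preferred not to rely on \cite{AF2} as a black box, the fallback would be to use the closed basic forms $\omega_1=\Re\epsilon$ and $\omega_2=\Im\epsilon$ together with $d\eta$. On each Lie algebra of the list, one would impose that these three closed $2$-forms vanish on $\xi$, are $\mathcal{L}_\xi$-invariant, and span a hyperkähler-type triple on $\ker\eta$. One would then check that this forces $\ker\eta$ to be abelian modulo $\xi$, i.e.\ $\mathfrak{g}\simeq\mathfrak{h}_5$. That computation is where I would expect the real work to lie.
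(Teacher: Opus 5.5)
Your argument is essentially the paper's: reduce to \cite[Proposition 4.5]{AF2} by using the underlying Sasakian structure and the classification in \cite{AF} to show that no $5$-dimensional Sasakian Lie algebra is perfect. The paper handles the trivial-center case slightly differently, citing Diatta's result that $\mathfrak{sl}(2,\mathbb{R})\ltimes\mathbb{R}^2$ is the only perfect contact Lie algebra with trivial center together with \cite[Proposition 12]{AF}, rather than inspecting the list in Theorem~\ref{theo-Sasaki-trivial}, but the content is the same.

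There is one small slip in your bookkeeping: for $\mathfrak{g}_4=(0,-e^{12},0,-e^{34},e^{12}+e^{34})$ one has $de^4=-e^{34}\neq0$. However, $de^1=de^3=0$ there, so your conclusion is unaffected.
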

\begin{proof}
The result is proved in \cite[Proposition 4.5]{AF2} under the assumption that the contact Calabi-Yau Lie algebra $\mathfrak{g}$ satisfies  $[\mathfrak{g},\mathfrak{g}]\neq \mathfrak{g}$.
This assumption can be actually removed, owing to the fact that a contact Calabi-Yau  Lie algebra admits an underlying Sasakian structure. Indeed, according to \cite{Diatta}, the only contact Lie algebra $\mathfrak{g}$ with trivial center such that $[\mathfrak{g},\mathfrak{g}]=\mathfrak{g}$ is the semidirect product $\mathfrak{sl}(2,\mathbb{R})\ltimes \mathbb{R}^2$. By [\citenum{AF}, Proposition 12], this Lie algebra does not admit any Sasakian structure (see also Theorem \ref{theo-Sasaki-trivial}). On the other hand, it follows from Theorem \ref{theo-Sasaki-nontrivial} that there exists no $5$-dimensional Sasakian Lie algebra with nontrivial center satisfying $[\mathfrak{g},\mathfrak{g}]=\mathfrak{g}$.
\end{proof}

Building on the above, we can classify $5$-dimensional aqS Lie algebras.
\begin{theorem} \label{theorem3}
    Let   $(\mathfrak{g},\varphi,\eta,\xi,g)$ be a $5$-dimensional anti-quasi-Sasakian (non cok\"ahler) Lie algebra. Then, up to a homothetic deformation of the structure, $\mathfrak{g}$ is isomorphic to the {Heisenberg} Lie algebra $\mathfrak{h}_5$. 
    \end{theorem}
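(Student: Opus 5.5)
The strategy is to turn the given aqS structure, after a homothetic deformation, into a double aqS–Sasakian structure. By the correspondence recalled above, that is a contact Calabi–Yau structure, and Lemma~\ref{lemma-calabi-yau} then identifies $\mathfrak{g}$ with $\mathfrak{h}_5$.

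First, I show that $\eta$ is a contact form. Since the structure is not cok\"ahler and $d\Phi=0$, we must have $d\eta\neq0$; otherwise the anti-normal condition gives $N_\varphi=0$, and the structure would be cok\"ahler. From the anti-normal condition, $\iota_\xi d\eta=0$ and $d\eta$ is $\varphi$-anti-invariant. So $d\eta$ restricts to a nonzero $2$-form on the $4$-dimensional space $\ker\eta$ of type $(2,0)+(0,2)$ with respect to $\varphi$. The space of such real forms is $2$-dimensional, and every nonzero element of it is nondegenerate. Indeed, with $\varphi$ as in the Example, it is spanned by $\Phi_1,\Phi_2$, and $(a\Phi_1+b\Phi_2)^2$ is a nonzero multiple of $(a^2+b^2)$ times a volume form. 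Hence $\eta\wedge(d\eta)^2\neq0$ and $\xi$ is the Reeb vector.

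Next, I build the triple of structures. Write $d\eta=2\Phi_3'$ for a $2$-form $\Phi_3'$ on $\ker\eta$. Since $\Phi_3'$ is $\varphi$-anti-invariant and nondegenerate, there is $c>0$ and a $g$-skew endomorphism $\psi$ of $\ker\eta$ such that $\Phi_3'(X,Y)=c\,g(X,\psi Y)$, $\psi^2=-I$, and $\psi\varphi=-\varphi\psi$. This is the standard fact that anti-invariant $2$-forms on a Hermitian $4$-space correspond to multiples of complex structures anticommuting with $\varphi$. After a homothetic deformation of the structure (rescaling $g$, $\eta$ and $\xi$ so that $c=1$; this preserves the aqS conditions), I set $\varphi_3=\psi$ on $\ker\eta$ with $\varphi_3\xi=0$, and $\varphi_1=\varphi$, $\varphi_2=\varphi_3\varphi_1$, possibly replacing $\psi$ by $-\psi$ to fix orientation. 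Then $\varphi_1,\varphi_2,\varphi_3$ satisfy the quaternionic relations and $d\eta=2\Phi_3$, so $(\varphi_3,\xi,\eta,g)$ is contact metric.

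The remaining condition is $d\Phi_2=0$, and I expect this to be the main obstacle. The plan is to use the $\varphi$-anti-invariance and the type decomposition. Since $d\eta$ lies in $\mathrm{span}\{\Phi_1,\Phi_2\}$ up to the rotation, $\Phi_2$ should be, up to sign, the $\varphi$-conjugate of $d\eta$, namely $d\eta(\varphi\cdot,\cdot)$. Its closedness should follow from $\mathcal{L}_\xi\varphi=0$, $\mathcal{L}_\xi d\eta=0$, $d\Phi=0$ and the integrability hidden in the anti-normal condition. In the complexified language: $\epsilon=\Phi+i\Phi_2$ is a $(2,0)$-form on a $4$-space, so it is decomposable, and $d\eta$ is a multiple of $\Re(e^{i\theta}\epsilon)$. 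I would deduce $d\epsilon=0$ from $d(d\eta)=0$ and $d\Phi=0$ by working with the transverse complex structure, which is integrable because the structure is transversely K\"ahler. This is the delicate computation. Alternatively, I would cite the result from \cite{DG} that in dimension $5$ an aqS structure with $\eta$ contact is, after rescaling, part of a double aqS–Sasakian structure.

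With $d\Phi_1=d\Phi_2=0$ and $d\eta=2\Phi_3$ in hand, the structure is double aqS–Sasakian. Hence $(\mathfrak{g},\varphi_3,\xi,\eta,g,\epsilon=-\Phi_1-i\Phi_2)$ is contact Calabi–Yau, and Lemma~\ref{lemma-calabi-yau} gives $\mathfrak{g}\simeq\mathfrak{h}_5$.
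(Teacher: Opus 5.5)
Your route is the paper's: normalize $\psi^2=-I$ on $\ker\eta$ by a homothety, assemble $\varphi$, $\psi\varphi$, $\psi$ into a double aqS--Sasakian structure, pass to a contact Calabi--Yau structure and apply Lemma~\ref{lemma-calabi-yau}. The only nontrivial input is your $d\Phi_2=0$, which (since $\psi\varphi=-\varphi\psi=-A$) is exactly the identity $d\mathcal{A}=0$ that the paper also imports from \cite{DG}, so your fallback citation reproduces the paper's proof.

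If you want the direct argument instead, fix the forms. $\Phi+i\Phi_2$ is not of pure type for $\varphi$, and $d\eta$ is not in its span. Rather, $d\eta$ and $d\eta(\varphi\cdot,\cdot)=2\Phi_2$ are, up to sign, the real and imaginary parts of a basic form $\sigma$ of pure type $(2,0)$ for $\varphi$. Transverse integrability of $\varphi$ puts $d\sigma$ and $d\bar\sigma$ in the complementary types $(2,1)$ and $(1,2)$, so $d(d\eta)=0$ forces $d\sigma=0$. Also, replacing $\psi$ by $-\psi$ is never needed, and it would destroy $d\eta=2\Phi_3$.
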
   
\begin{proof}
We apply general properties of anti-quasi-Sasakian structures (see \cite{DG} for details). For an aqS Lie algebra, one can consider two endomorphisms $\psi$ and $A$ given by  
\[
2g(X,\psi Y) = d\eta(X,Y), \quad A(X) = \varphi\psi(X), \quad \forall X,Y \in \mathfrak{g}.
\]
They are both skew-symmetric and anticommute with $\varphi$. In dimension 5, if the structure is non cok\"ahler, one has
$\psi^2 = A^2 = \lambda^2(-I + \eta \otimes \xi)$, $ \lambda \neq 0.$
The associated $2$-forms,
\[
\mathcal{A}(X,Y) = g(X, AY), \qquad \Psi(X,Y) = g(X, \psi Y),
\]
together with the fundamental $2$-form $\Phi$, satisfy the following relations:
\[
d\mathcal{A} = 0, \qquad d\Phi = 0, \qquad d\eta = 2\Psi.
\]
Next, consider the homothetic deformation
\[
\varphi' = \varphi, \qquad \xi' = \tfrac{1}{\lambda}\xi, \qquad \eta' = \lambda \eta, \qquad g' = \lambda^2 g.
\]
This is an anti-quasi-Sasakian structure with associated operators
\[
A' = \tfrac{1}{\lambda}A, \qquad \psi' = \tfrac{1}{\lambda}\psi.
\]
It follows that $\psi'^2 = A'^2 = -I + \eta' \otimes \xi',
$ and therefore $(A', \varphi', \psi', \xi', \eta', g')$ defines a double  aqS-Sasakian structure. This in turn determines a contact Calabi-Yau structure as remarked above. By Lemma \ref{lemma-calabi-yau}, it follows that the Lie algebra $\mathfrak{g}$ is isomorphic to the Heisenberg Lie algebra $\mathfrak{h}_5$. 
\end{proof}

 \section{Transversely K\"ahler almost contact metric structures}\label{Sect}
In this section, we study transversely K\"ahler almost contact metric Lie algebras, analogously to the same notion for manifolds. This class in particular includes both quasi Sasakian and anti-quasi-Sasakian Lie algebras. We examine transversely K\"ahler almost contact metric Lie algebras $(\mathfrak{g},\varphi,\xi,\eta,g)$ such that $\eta$ is a contact form, distinguishing between cases where the center is trivial or nontrivial. In the latter case, we investigate some curvature properties.

\subsection{Definition and basic properties}

A \emph{transversely
K\"ahler} almost contact metric manifold is a manifold $(M,\varphi,\xi,\eta,g)$ for which the structure tensor fields $(\varphi,g)$ are projectable along the $1$-dimensional foliation
generated by $\xi$ and induce a transverse K\"ahler structure. Referring to the Chinea-Gonzalez classification, this is the class of manifolds belonging to $\mathcal{C}_6\oplus\mathcal{C}_7\oplus\mathcal{C}_{10}\oplus\mathcal{C}_{11}$, for which the Lie derivative $\mathcal{L}_\xi\varphi$ vanishes (\cite{Chinea-Gonzalez,DG,ADDK}). In analogy with the characterization of transversely K\"ahler almost contact metric manifolds given in \cite{DG}, we introduce the notion in the context of Lie algebras.

\begin{definition}\label{def1}
   An almost contact metric structure $(\varphi,\xi,\eta,g)$ on a Lie algebra $\mathfrak{g}$ is \emph{transversely K\"ahler} if 
\[
d\Phi=0, \quad N_\varphi(\xi,X)=0, \quad N_\varphi(X,Y,Z)=0, \quad \forall X,Y,Z\in\mathfrak{h},
\]
  where $N_\varphi(X,Y,Z):=g(N_\varphi(X,Y),Z).$ In particular both quasi Sasakian and anti-quasi Sasakian structures are {transversely K\"ahler}. If furthermore, the $1$-form $\eta$ is a contact form, i. e. ${\eta \wedge(d\eta)^n\neq 0}$, we say that the structure is of \emph{maximal rank}.
\end{definition}

\begin{remark}
The conditions in Definition \ref{def1} can equivalently be expressed as
\[
(\mathcal{L}_\xi \varphi)(X)=0, \quad (\mathcal{L}_\xi g)(X,Y)=0, \quad N_\varphi(X,Y,Z)=0, \quad d\Phi(X,Y,Z)=0, 
\]
for all $X,Y,Z\in\mathfrak{h}$. The equivalence is shown in \cite{DG} for transversely K\"ahler structures on manifolds, and essentially depends on the identities
\begin{equation}\label{N(X,xi)}
    N_\varphi(\xi,X)=-\varphi(\mathcal{L}_\xi\varphi)X+d\eta(\xi,X)\xi,
\end{equation}
\begin{equation}\label{dPhi}
    d\Phi(\xi,X,Y)=(\mathcal{L}_\xi g)(X,\varphi Y)+g(X,(\mathcal{L}_\xi\varphi)Y).
\end{equation}
It is also worth remarking that the intersection between transversely K\"ahler  and $K$-contact structures is exactly given by Sasakian structures.
\end{remark}
\begin{remark} For a transversely K\"ahler almost contact metric structure, one has \[\mathcal{L}_\xi\eta=0, \qquad  \mathcal{L}_\xi d\eta=0, \qquad \mathcal{L}_\xi\varphi=0 , \qquad  \mathcal{L}_\xi g=0\]
which can be rephrased as follows: 
\begin{eqnarray*}
\mathcal{L}_\xi\eta=0 &\Leftrightarrow& \eta\circ \ad_\xi=0,\\
\mathcal{L}_\xi d\eta=0 &\Leftrightarrow& d\eta(\ad_\xi X,Y)+d\eta(X,\ad_\xi Y)=0\quad \forall X,Y\in\mathfrak{g},\\
\mathcal{L}_\xi \varphi = 0  &\Leftrightarrow &\ad_\xi\circ\, \varphi = \varphi \circ  \ad_\xi\quad  \text{($\ad_\xi$ commutes with $\varphi$)},\\
\mathcal{L}_\xi g = 0 &\Leftrightarrow &g(\ad_\xi X,Y)+g(X,\ad_\xi Y)=0\quad \forall X,Y\in\mathfrak{g}\\ &&\text{($\ad_\xi$ is skew-symmetric with respect to $g$)}
\end{eqnarray*}
\end{remark}

For  transversely K\"ahler almost contact metric Lie algebras, the following structural result holds.

 \begin{proposition}\label{prop6}
      Let $(\mathfrak{g},\varphi,\eta,\xi,g)$ be a transversely K\"ahler a.c.m. Lie algebra. Then,  $\ker\ad_\xi$ and $\im\ad_\xi$ are $\varphi$-invariant subspaces of $\mathfrak{g}$ satisfying $(\im\ad_\xi)^\perp=\ker\ad_\xi$. Therefore,
 \begin{equation}\label{decomposition}
 \mathfrak{g}=\ker\ad_\xi \oplus \im\ad_\xi.
 \end{equation}
 \end{proposition}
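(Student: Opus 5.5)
The argument uses only the identities collected in the preceding Remark: for a transversely K\"ahler structure, $\ad_\xi$ commutes with $\varphi$ and is skew-symmetric with respect to $g$. Both claims of the proposition are linear-algebraic consequences of these two facts, so no bracket computations are needed beyond those already recorded.

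First, $\varphi$-invariance. Since $\ad_\xi\circ\varphi=\varphi\circ\ad_\xi$, if $\ad_\xi X=0$ then $\ad_\xi(\varphi X)=\varphi(\ad_\xi X)=0$, so $\ker\ad_\xi$ is $\varphi$-invariant. Similarly, $\varphi(\ad_\xi X)=\ad_\xi(\varphi X)\in\im\ad_\xi$, so $\im\ad_\xi$ is $\varphi$-invariant.

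Next, orthogonality. For a skew-symmetric endomorphism $T$ of a finite-dimensional inner product space, the identity $g(TX,Y)=-g(X,TY)$ gives the following chain of equivalences:
\[
Y\in(\im T)^\perp \iff g(TX,Y)=0\ \ \forall X \iff g(X,TY)=0\ \ \forall X \iff TY=0 .
\]
The last step uses that $g$ is positive definite. Applying this with $T=\ad_\xi$ yields $(\im\ad_\xi)^\perp=\ker\ad_\xi$.

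Finally, the decomposition. Since $g$ is positive definite on the finite-dimensional space $\mathfrak{g}$, we have $\mathfrak{g}=\im\ad_\xi\oplus(\im\ad_\xi)^\perp$. Substituting the orthogonality result gives $\mathfrak{g}=\ker\ad_\xi\oplus\im\ad_\xi$, which is \eqref{decomposition}, and the sum is orthogonal.

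The only point requiring justification is that $\mathcal{L}_\xi\varphi=0$ and $\mathcal{L}_\xi g=0$ hold for all vectors in $\mathfrak{g}$, not just for horizontal ones. The Remark after Definition \ref{def1} states these conditions only on $\mathfrak{h}$, while the subsequent Remark asserts them globally. To extend them to $\xi$ itself, use $\ad_\xi\xi=0$ and $\varphi\xi=0$: then $(\mathcal{L}_\xi\varphi)\xi=\ad_\xi(\varphi\xi)-\varphi(\ad_\xi\xi)=0$. For the metric, the two $\mathcal{L}_\xi g$ terms that involve $\xi$ are of the form $g(\ad_\xi\xi,\cdot)=0$, and the remaining one, $g(\xi,\ad_\xi X)=\eta(\ad_\xi X)$, vanishes because $\mathcal{L}_\xi\eta=0$. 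The latter follows from $N_\varphi(\xi,X)=0$ via \eqref{N(X,xi)}, which gives $d\eta(\xi,X)=0$, so $\eta([\xi,X])=0$.
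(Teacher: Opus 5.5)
Your proof is correct and follows the paper's argument: $\varphi$-invariance comes from $\ad_\xi\circ\varphi=\varphi\circ\ad_\xi$, and orthogonality from the skew-symmetry of $\ad_\xi$. The differences are minor. The paper shows $\ker\ad_\xi\subseteq(\im\ad_\xi)^\perp$ and then counts dimensions, while you get both inclusions at once from the nondegeneracy of $g$. You also check that $\mathcal{L}_\xi\varphi=0$ and $\mathcal{L}_\xi g=0$ extend from $\mathfrak{h}$ to all of $\mathfrak{g}$, which the paper simply takes from the preceding Remark.
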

\begin{proof}
Since $\ad_\xi \circ\,\varphi = \varphi \circ \ad_\xi$, both $\ker \ad_\xi$ and $\im\ad_\xi$ are $\varphi$-invariant. 
Moreover, since $\mathcal{L}_\xi g = 0$, for any $X \in \ker \ad_\xi$ and $Y \in \mathfrak{g}$, we have
$0  = -g(X, [\xi,Y]),
$
which shows that $X \in (\im \ad_\xi)^\perp$. Hence,
$\ker \ad_\xi \subseteq (\im \ad_\xi)^\perp$. 
Comparing dimensions shows that $\ker \ad_\xi = (\im \ad_\xi)^\perp$, establishing the decomposition 
$\mathfrak{g} = \ker \ad_\xi \oplus \im\ad_\xi$.
\end{proof}

\begin{remark}
The fact that Equation \eqref{decomposition} holds exactly as in the Sasakian case, makes it possible to treat the $3$-dimensional case in the same manner as $3$-dimensional Sasakian Lie algebras are classified in [\citenum{AF}, Theorem 9]. Arguing analougously, one can verify that 
    any $3$-dimensional transversely K\"ahler almost contact metric Lie algebra, with contact form $\eta$, is isomorphic to one of the Lie following: $\mathfrak{su}(2)$, $\mathfrak{sl}(2,\mathbb{R})$, $\mathfrak{aff}(\mathbb{R})\times\mathbb{R}$ and  $\mathfrak{h}_3$. 
\end{remark}

It has been shown in [\citenum{AF}, Proposition 1] that the center of a contact Lie algebra $(\mathfrak{g},\eta)$ is either trivial or one-dimensional; in the latter case, its center 
$\mathfrak{z}(\mathfrak{g})$  is spanned by the Reeb vector $\xi$. Accordingly, any transversely K\"ahler a.c.m. Lie algebra of maximal rank with a nontrivial center satisfies
$\mathfrak{z}(\mathfrak{g}) = \mathbb{R}\xi.$

\subsection{The case of trivial center}
Now we investigate transversely K\"ahler a.c.m. Lie algebras of maximal rank with trivial center.

\begin{proposition}\label{prop8}
     Let $(\mathfrak{g},\varphi,\eta,\xi,g)$ be a transversely K\"ahler a.c.m. Lie algebra of maximal rank with trivial center.
        
        \begin{itemize}
            \item[(1)] If $\dim\mathfrak{g}\geq 5$, then $\ker\ad_\xi$ is a transversely K\"ahler  a.c.m. Lie subalgebra of $\mathfrak{g}$ with maximal rank and nontrivial center.
            \item[(2)] If $X\in \ker\ad_\xi$, $Y\in \im\ad_\xi$, then $[X,Y]\in \im\ad_\xi$.
        \end{itemize}
\end{proposition}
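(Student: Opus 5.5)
Part (2) is proved first, since it is used in (1). Let $X\in\ker\ad_\xi$ and $Y=[\xi,Z]\in\im\ad_\xi$. By the Jacobi identity and $[\xi,X]=0$,
\[
[X,Y]=[X,[\xi,Z]]=[\xi,[X,Z]]-[Z,[X,\xi]]=[\xi,[X,Z]]\in\im\ad_\xi .
\]
The same identity shows that $\ker\ad_\xi$ is a subalgebra: if $[\xi,X]=[\xi,X']=0$ then $[\xi,[X,X']]=0$.

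For (1), set $\mathfrak{k}=\ker\ad_\xi$. By Proposition \ref{prop6}, $\mathfrak{k}$ is $\varphi$-invariant and contains $\xi$. Hence $\varphi|_{\mathfrak{k}}$, $\xi$, $\eta|_{\mathfrak{k}}$, $g|_{\mathfrak{k}}$ restrict to an almost contact metric structure on $\mathfrak{k}$; the identity $\varphi^2=-I+\eta\otimes\xi$ and the compatibility with $g$ pass to the subspace.

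\textbf{Transversely K\"ahler.} Since $\mathfrak{k}$ is a subalgebra, the Chevalley--Eilenberg differential commutes with restriction. Thus $d(\Phi|_{\mathfrak{k}})=(d\Phi)|_{\mathfrak{k}}=0$ and $d(\eta|_{\mathfrak{k}})=(d\eta)|_{\mathfrak{k}}$. The Nijenhuis tensor is computed with brackets inside $\mathfrak{k}$ and $\varphi$ preserves $\mathfrak{k}$, so $N_{\varphi|_\mathfrak{k}}$ is the restriction of $N_\varphi$. Therefore the conditions of Definition \ref{def1} hold on $\mathfrak{k}$.

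\textbf{Nontrivial center.} By construction $\xi$ commutes with everything in $\mathfrak{k}$, so $\xi\in\mathfrak{z}(\mathfrak{k})$.

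\textbf{Maximal rank.} This is the main step. I need $\eta|_{\mathfrak{k}}$ to be a contact form on $\mathfrak{k}$, which amounts to two facts:
\begin{itemize}
\item[(a)] $\dim\mathfrak{k}$ is odd;
\item[(b)] $d\eta$ is nondegenerate on $\mathfrak{k}\cap\mathfrak{h}$, where $\mathfrak{h}=\ker\eta$.
\end{itemize}

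For (a): $\mathfrak{k}=\langle\xi\rangle\oplus(\mathfrak{k}\cap\mathfrak{h})$, because $\xi\in\mathfrak{k}$ and $\mathfrak{h}=\xi^\perp$. Moreover $\im\ad_\xi\subset\mathfrak{h}$, since $\eta\circ\ad_\xi=0$. Proposition \ref{prop6} then gives the orthogonal decomposition
\[
\mathfrak{h}=(\mathfrak{k}\cap\mathfrak{h})\oplus\im\ad_\xi .
\]
Both summands are $\varphi$-invariant and $\varphi^2=-I$ on $\mathfrak{h}$, so each has even dimension. Hence $\dim\mathfrak{k}$ is odd.

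For (b): let $X\in\mathfrak{k}\cap\mathfrak{h}$ and $W\in\im\ad_\xi$. Write $W=[\xi,Z]$ with $Z\in\im\ad_\xi$; this is possible because $\ad_\xi$ is skew-symmetric and hence invertible on $\im\ad_\xi$. Then
\[
d\eta(X,W)=-\eta([X,W])=0,
\]
since $[X,W]\in\im\ad_\xi\subset\mathfrak{h}$ by part (2). So $\mathfrak{k}\cap\mathfrak{h}$ and $\im\ad_\xi$ are $d\eta$-orthogonal. We also have $\iota_\xi d\eta=0$, because $\mathcal{L}_\xi\eta=0$ means $\eta\circ\ad_\xi=0$. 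Therefore $d\eta$ on $\mathfrak{g}$ is block diagonal with respect to $\langle\xi\rangle\oplus(\mathfrak{k}\cap\mathfrak{h})\oplus\im\ad_\xi$. Since $d\eta$ is nondegenerate on $\mathfrak{h}$ (because $\eta$ is a contact form), it is nondegenerate on each block, in particular on $\mathfrak{k}\cap\mathfrak{h}$. This gives $\eta\wedge(d\eta)^m\neq0$ on $\mathfrak{k}$ with $\dim\mathfrak{k}=2m+1$.

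\textbf{Role of the hypotheses.} The trivial center of $\mathfrak{g}$ forces $\ad_\xi\neq0$, so $\mathfrak{k}$ is a proper subalgebra. The assumption $\dim\mathfrak{g}\ge5$ should guarantee $\dim\mathfrak{k}\ge3$, so that $\mathfrak{k}\neq\mathbb{R}\xi$ and the statement is meaningful.
- $\im\ad_\xi$ is even-dimensional and carries a nonzero skew-symmetric $\ad_\xi$. If it were all of $\mathfrak{h}$, then $\mathfrak{k}=\langle\xi\rangle$.
- To exclude this when $\dim\mathfrak{g}\ge5$, I would argue as in the Sasakian case of \cite{AF}. Taking $X,Y\in\im\ad_\xi$, $\ad_\xi$ acts as a derivation on $[X,Y]$, which pairs eigenvalues of $\ad_\xi$ on $\im\ad_\xi$ with its kernel via $\eta$. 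Taking $\ad_\xi$-eigenvectors $X_\nu,X_{\nu'}$ with $\nu+\nu'\neq0$, one gets $\eta([X_\nu,X_{\nu'}])=0$, so $d\eta$ restricted to $\im\ad_\xi$ pairs eigenspaces of $\pm\nu$. Combining this with the rank forcing $\mathfrak{k}\cap\mathfrak{h}\neq0$ is the delicate point.

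If that pairing argument does not go through directly, I would instead appeal to the analogous statement in \cite{AF} for Sasakian Lie algebras, whose proof uses only the decomposition \eqref{decomposition}.
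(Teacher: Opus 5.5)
Your proof is correct and follows essentially the paper's argument. The Jacobi identity gives the subalgebra property and part (2), and maximal rank comes from the $d\eta$-orthogonality of $\ker\ad_\xi\cap\ker\eta$ and $\im\ad_\xi$ combined with nondegeneracy of $d\eta$ on $\ker\eta$; you obtain that orthogonality from part (2) and $\eta\circ\ad_\xi=0$, while the paper uses $\mathcal{L}_\xi d\eta=0$, which amounts to the same identity.

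Your final ``Role of the hypotheses'' paragraph can be dropped. The statement does not claim $\ker\ad_\xi\neq\mathbb{R}\xi$, and in that case the conclusion holds trivially. Neither your argument nor the paper's uses $\dim\mathfrak{g}\geq 5$, so the unfinished eigenvalue-pairing sketch and the fallback to the Sasakian case leave no gap in the proof.
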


\begin{proof}
For the first item, using the Jacobi identity, we have $[[X,Y],\xi]=0$ for any $X,Y \in \ker \ad_\xi$, which shows that $[X,Y] \in \ker \ad_\xi$ and hence that $\ker \ad_\xi$ is a Lie subalgebra of $\mathfrak{g}$.

  Let $X \in \ker \ad_\xi \cap \ker \eta$ and $Y \in \mathfrak{g}$. Then, being $\mathcal{L}_\xi d\eta=0$,  $d\eta(X, \ad_\xi Y)=-d\eta(\ad_\xi X,Y)=0$, which implies that $d\eta(X, Z')=0$ for any $Z' \in \im \ad_\xi$. Now, if $d\eta(X, Z)=0$ for all $Z \in \ker \mathrm{ad}_\xi$, then $$d\eta(X, W)=0$$ for any $W \in \ker \eta$, and consequently $X=0$, since
$d\eta$ is nondegenerate on $\ker \eta$. Therefore, the restriction of $\eta$ to $\ker \ad_\xi$ is a contact form on the Lie subalgebra $\ker \ad_\xi$. Moreover, $\xi \in \mathfrak{z}(\ker \ad_\xi)$, and the restrictions of $\varphi$ and $g$ induce a transversely K\"ahler almost contact metric structure on $\ker\ad_\xi$.

         For the second item, one can write $$Y=[\xi,Y'],$$
        with $Y'\in \mathfrak{g}$. From the Jacobi identity,
        $$[X,Y]=[X,[\xi,Y']]=-[\xi,[Y',X]]-[Y',[X,\xi]]=-[\xi,[Y',X]]$$
        which is contained in $\im\ad_\xi$.
\end{proof}

     Observe that if $\ker\eta = \im\ad_\xi$, then the commutator ideal $\mathfrak{g}'=[\mathfrak{g},\mathfrak{g}]$ coincides with $\mathfrak{g}$. Assume now that the transversely K\"ahler a.c.m. Lie algebra $(\mathfrak{g},\varphi,\eta,\xi,g)$ of maximal rank with trivial center further satisfies $\mathfrak{g}'\neq \mathfrak{g}$. Under the decomposition $\mathfrak{g} = \ker \ad_\xi \oplus \im \ad_\xi$, the restrictions of $\ad_\xi$ and $\varphi$ to $\ker \eta$ can be expressed as
\[
(\ad_\xi)|_{\ker \eta} = 
\begin{pmatrix}
0 & 0 \\
0 & U
\end{pmatrix}, 
\hspace{1cm}
\varphi|_{\ker \eta} = 
\begin{pmatrix}
A & C \\
B & D
\end{pmatrix},
\]
where $U : \im\ad_\xi \to \im\ad_\xi$ is a nonsingular operator.
     From the relation $\varphi \circ\ad_\xi = \ad_\xi\circ \varphi$, it follows that
\[
B = C = 0, \qquad DU = UD,
\]
and, using $(\varphi|_{\ker \eta})^2 = -I$, we also obtain
\[
A^2 = D^2 = -I.
\]

Notice that, for a solvable $\mathfrak{g}$, the Reeb vector $\xi$ cannot belong to the commutator  $\mathfrak{g}'$. Indeed, if one assumes $\xi\in\mathfrak{g}'$, by the Cartan criterion of solvability, one has $\operatorname{tr}(\ad_\xi^2)=0$, and the skew-symmetry of $\ad_\xi$ implies that $\xi$ is central, which is a contradiction.

 \subsection{The case of nontrivial center}

 In this subsection, we investigate transversely K\"ahler almost contact metric Lie algebras of maximal rank and with nontrivial center, establishing a one-to-one correspondence with  K\"ahler Lie algebras endowed with a symplectic form. The following theorem is a direct consequence of results in \cite{Dario}. For the sake of completeness, we explicitly detail certain constructions as required for our purposes.

\begin{theorem} \label{theorem6}
    There exists a one-to-one correspondence between $(2n+1)$-dimensional transversely K\"ahler a.c.m. Lie algebras $(\mathfrak{g},\varphi,\xi,\eta,g)$ of maximal rank with nontrivial center, and $2n$-dimensional K\"ahler Lie algebras $(\mathfrak{h},J,h,\omega)$ endowed with a symplectic $2$-form.
\end{theorem}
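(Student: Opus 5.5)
We build the correspondence explicitly in both directions and then check that the two constructions are mutually inverse, up to the obvious identifications.

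\emph{From $\mathfrak{g}$ to $\mathfrak{h}$.} Let $(\mathfrak{g},\varphi,\xi,\eta,g)$ be transversely K\"ahler of maximal rank with nontrivial center. As recalled above, $\mathfrak{z}(\mathfrak{g})=\mathbb{R}\xi$, so $\mathfrak{h}:=\mathfrak{g}/\mathbb{R}\xi$ is a Lie algebra, and we identify it as a vector space with $\ker\eta$ via the projection $\pi$. For $X,Y\in\ker\eta$ the bracket becomes
\[
[X,Y]_{\mathfrak{h}}=[X,Y]-\eta([X,Y])\xi=[X,Y]+d\eta(X,Y)\xi,
\]
using the convention $d\eta(X,Y)=-\eta([X,Y])$. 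We define $J=\varphi|_{\ker\eta}$ and $h=g|_{\ker\eta}$, and let $\omega$ be the 2-form on $\mathfrak{h}$ with $\pi^*\omega=d\eta$. This is well defined because $\iota_\xi d\eta=0$, which holds since $\xi$ is central. The form $\omega$ is closed because $d\eta$ is, and $d$ commutes with $\pi^*$ ($\pi$ is a Lie algebra morphism and $\pi^*$ is injective). It is nondegenerate exactly because $\eta\wedge(d\eta)^n\neq0$. The K\"ahler form $\Omega_h$ pulls back to $\Phi$, again because $\iota_\xi\Phi=0$, so $\pi^*d\Omega_h=d\Phi=0$ and $\Omega_h$ is closed. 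For integrability, note that the centrality of $\xi$ makes $N_\varphi(\xi,X)=0$ automatic. For $X,Y\in\ker\eta$, the horizontal part of $N_\varphi(X,Y)$ computed in $\mathfrak{g}$ is precisely $[J,J](X,Y)$ computed in $\mathfrak{h}$: the $\xi$-components of the brackets are killed by $\varphi$ or projected away. The hypothesis $N_\varphi(X,Y,Z)=0$ for $Z\in\mathfrak{h}$ therefore gives $[J,J]=0$. Hence $(\mathfrak{h},J,h)$ is K\"ahler, and $\omega$ is symplectic.

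\emph{From $\mathfrak{h}$ to $\mathfrak{g}$.} Conversely, given a K\"ahler Lie algebra $(\mathfrak{h},J,h)$ with symplectic form $\omega$, we set $\mathfrak{g}=\mathfrak{h}\oplus\mathbb{R}\xi$ with
\[
[X+a\xi,Y+b\xi]=[X,Y]_{\mathfrak{h}}-\omega(X,Y)\xi .
\]
The Jacobi identity holds because $\omega$ is a 2-cocycle, i.e.\ $d\omega=0$. We take $\eta$ dual to $\xi$, $\varphi=J\oplus0$ and $g=h+\eta\otimes\eta$. Then $d\eta=\pi^*\omega$, so $\eta$ is a contact form, $\xi$ is central, and $\Phi=\pi^*\Omega_h$ is closed. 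The same Nijenhuis computation as before, run in reverse, gives $N_\varphi(X,Y,Z)=0$ on $\ker\eta$.

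The two constructions are inverse to each other. Isomorphic structures on one side correspond to isomorphic structures on the other, since an isomorphism of the a.c.m.\ structures preserves $\xi$ and $\ker\eta$.

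The main point needing care is the Nijenhuis identification. We must expand $N_\varphi(X,Y)$ for horizontal $X,Y$ and verify that every $\xi$-term either cancels against $d\eta(X,Y)\xi$ or is orthogonal to $\mathfrak{h}$. The terms $\varphi[X,\varphi Y]$ and $\varphi[\varphi X,Y]$ lose their $\xi$-parts entirely. In $[\varphi X,\varphi Y]$ and in $\varphi^2[X,Y]=-[X,Y]+\eta([X,Y])\xi$, only the horizontal parts matter after pairing with $Z\in\mathfrak{h}$. The remaining work is routine sign bookkeeping with the chosen convention for $d\eta$.
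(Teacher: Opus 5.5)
Your proposal is correct and follows essentially the same route as the paper: identify $\mathfrak{g}/\mathbb{R}\xi$ with $\ker\eta$, take $J=\varphi|_{\ker\eta}$, $h=g|_{\ker\eta}$ and $\omega=d\eta|_{\ker\eta}$, obtain integrability from the horizontal part of $N_\varphi$ and closedness of the K\"ahler form from $d\Phi=0$, and conversely build the central extension $[X,Y]=[X,Y]_{\mathfrak{h}}-\omega(X,Y)\xi$. The only difference is that in the converse the paper also computes the vertical part $N_\varphi(X,Y)=(\omega(X,Y)-\omega(JX,JY))\xi$, which it uses afterwards to characterize the quasi Sasakian and anti-quasi-Sasakian cases, but this is not needed for the statement itself.
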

\begin{proof}
    We begin by assuming that $(\mathfrak{g}, \varphi, \xi, \eta, g)$ is a transversely K\"ahler a.c.m. Lie algebra of maximal rank and nontrivial center. Since $\eta(\xi)=1$ and $d\eta(\xi,\cdot)=0$, $\xi$ coincides with the Reeb vector associated with the contact form $\eta$. As the center of $\mathfrak{g}$ is nontrivial, we have $\mathfrak{z}(\mathfrak{g})=\mathbb{R}\xi$.

Furthermore, as vector spaces $\mathfrak{g}=\mathfrak{h}\oplus\mathbb{R}\xi=\mathfrak{h}\oplus \mathfrak{z}(\mathfrak{g})$ and $\mathfrak{g} / \mathfrak{z}(\mathfrak{g}) \cong \mathfrak{h}$ by means $X+ \mathfrak{z}(\mathfrak{g}) \mapsto 
 X_\mathfrak{h}$. Being  $ \mathfrak{z}(\mathfrak{g})$ an ideal of $\mathfrak{g}$, $\mathfrak{g} / \mathfrak{z}(\mathfrak{g})$ is a Lie algebra and the above isomorphism induces the Lie bracket $[\cdot, \cdot]_\mathfrak{h}$ on $\mathfrak{h}$. Additionally, one has 
 $$[X,Y]=[X,Y]_\mathfrak{h}+\eta([X,Y])\xi=[X,Y]_\mathfrak{h}-d\eta(X,Y)\xi,$$
 for every $X,Y \in \mathfrak{h}$, so that $\mathfrak{g}$ is the $1$-dimensional central extension of $(\mathfrak{h},[\cdot,\cdot]_\mathfrak{h})$ by the $2$-cocycle $-d\eta$.

If we take $J = \varphi|_\mathfrak{h}$ and $h = g|_{\mathfrak{h} \times \mathfrak{h}}$, then $(J,h)$ is an almost Hermitian structure on the Lie algebra $(\mathfrak{h}, [\cdot, \cdot]_\mathfrak{h})$. For all $X, Y \in \mathfrak{h}$, we have
\begin{eqnarray*}
    N_J(X,Y)
    &=& [\varphi X, \varphi Y]_\mathfrak{h} + \varphi^2[X,Y]_\mathfrak{h} - \varphi[X, \varphi Y]_\mathfrak{h} - \varphi[\varphi X, Y]_\mathfrak{h} \\
    &=& (N_\varphi(X,Y))_\mathfrak{h} = 0,
\end{eqnarray*}
which shows that $J$ is a complex structure. Denoting by $\Omega = \Phi|_{\mathfrak{h} \times \mathfrak{h}}$ the fundamental $2$-form associated with $(J,h)$, one has 
\[ d\Omega(X,Y,Z)=d\Phi(X,Y,Z)=0\]
for all $X,Y,Z \in \mathfrak{h}$ and hence $(\mathfrak{h},J,h)$ is K\"ahlerian.

Conversely, starting with a Kähler Lie algebra $(\mathfrak{h}, [\cdot, \cdot]_\mathfrak{h}, J, h)$ and assuming 
      $\omega$ is a symplectic 2-form on $\mathfrak{h}$, one can consider the $1$-dimensional central extension $\mathfrak{g} := \mathfrak{h} \oplus \mathbb{R}\xi$ of $\mathfrak{h}$, whose Lie bracket $[\cdot,\cdot]$ is defined by
\begin{eqnarray} 
[X,\xi]=0, \quad [X,Y]=[X,Y]_{\mathfrak{h}}-\omega(X,Y)\xi,\label{1}
\end{eqnarray}
for all  $X,Y \in \mathfrak{h}$.
Thus, $\mathfrak{g}$ is equipped with an almost contact metric structure $(\varphi, \xi, \eta, g)$, 
where $\eta$ is the $1$-form dual to $\xi$, and $(\varphi, g)$ are obtained by extending the K\"ahler structure $(J, h)$ 
so that $\mathfrak{h} = \ker \eta = \im\varphi$, and $\xi$ is unit and orthogonal to $\mathfrak{h}$. Precisely, for all $X,Y,Z \in \mathfrak{h}$
$$\varphi (\xi)=0, \quad \varphi(X)=JX$$
and 
$$g(\xi,\xi)=1, \quad g(\xi,X)=0, \quad g(X,Y)=h(X,Y).$$
The structure has maximal rank, being $d\eta|_{\mathfrak{h}\times\mathfrak{h}}=\omega$.

We show that the structure is transversely K\"ahler. Indeed, the fundamental $2$-form $\Phi=g(\cdot, \varphi \cdot)$ is closed. This is a consequence of the closure of $\Omega$ on $\mathfrak{h}$ and the fact that $\xi$ lies in the center of $\mathfrak{g}$. As regards, the tensor $N_\varphi$, using \eqref{N(X,xi)} one has 
$$N_\varphi(\xi,X)=0,$$ 
and using the fact that $J$ is a complex structure, for every $X,Y\in\mathfrak{h}$
\begin{equation}
    N_\varphi(X,Y)=-\omega(JX,JY)\xi+\omega(X,Y)\xi.\label{eqn4.4}
\end{equation}
    Consequently, for every $X,Y,Z \in \mathfrak{h}$, $N_\varphi(X,Y,Z)=0$, which completes the proof that  $(\varphi,\eta,\xi,g)$ is a transversely K\"ahler almost contact metric structure. 
\end{proof}

  \begin{remark}\label{remark aqS qS}

Considering the structure $(\varphi,\xi,\eta,g)$ in the above theorem, from equation (\ref{eqn4.4}), we have
\begin{itemize}
    \item the structure is anti-quasi-Sasakian if and only if $N_\varphi=2d\eta\otimes\xi$, namely $\omega(JX,JX)=-\omega(X,Y)$;
    \item the structure is quasi Sasakian if and only if $N_\varphi=0$, namely $\omega(JX,JX)=\omega(X,Y)$. It is $\alpha$-Sasakian iff $\omega=2\alpha\Omega$.
\end{itemize}
 \end{remark}
\begin{remark}\label{remark aqS}
 We showed in Theorem \ref{theorem3} that every $5$-dimensional anti-quasi-Sasakian (non cok\"ahler) Lie algebra  $(\mathfrak{g},\varphi,\xi, \eta,g)$ is isomorphic to the {Heisenberg} Lie algebra $\mathfrak{h}_5$. In the case where the Lie algebra has nontrivial center, the result can be alternatively justified using Theorem \ref{theorem6}. Indeed, considering a $5$-dimensional anti-quasi-Sasakian non cok\"ahler Lie algebra  $(\mathfrak{g},\varphi,\xi,\eta,g)$, it is already known that $\eta$ is contact form, and by Theorem \ref{theorem6}, the Lie algebra is the $1$-dimensional central extension of a $4$-dimensional K\"ahler Lie algebra $(\mathfrak{h},J,h,\omega)$ endowed with a symplectic $2$-form satisfying $\omega(JX,JY)=-\omega(X,Y)$. Then $(J,\omega)$ is what is called a complex symplectic structure on the Lie algebra $\mathfrak{h}$. The $4$-dimensional complex symplectic Lie algebras have been classified in \cite[Theorem 3.1]{BGL}. Combining this with the classification of $4$-dimensional K\"ahler Lie algebras obtained by Ovando in \cite{Ovandop} (see also Section \ref{section-dim5-center} and Table \ref{table1}) one can deduce that the only possibility is that $\mathfrak{g}$ is the central extension of the abelian K\"ahler Lie algebra $\mathbb{R}^4$. Therefore, for every horizontal vectors $X,Y$,
 \[[X,Y]=-\omega(X,Y)\xi,\]
 where $\omega$ is a complex symplectic structure on $\mathbb{R}^4$. One can easily show (see \cite[Lemma 1.6]{DG}) that $\mathbb{R}^4$ admits a basis $(e,f,Je,Jf)$ such that 
 \[\omega(e,f)=-\omega(Je,Jf)=a\ne0,\qquad \omega(e,Jf)=\omega(Je,f)=0,\]
 so that the nonvanishing brackets in the Lie algebra $\mathfrak{g}$ are 
 \[[f,e]=[Je,Jf]=a\xi\]
 and $\mathfrak{g}$ is isomorphic to $\mathfrak{h}_5$.
\end{remark}
From the proof of the Theorem \ref{theorem6} we also have the following result:
\begin{corollary}
    If $(\mathfrak{g},\varphi,\eta,\xi,g)$ is a transversely K\"ahler almost contact metric Lie algebra of maximal rank and nontrivial center $\mathfrak{z(g)}$, then $\mathfrak{g}/\mathfrak{z(g)}$ inherits a natural K\"ahler structure.
\end{corollary}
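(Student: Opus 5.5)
The plan is to read the statement off the first half of the proof of Theorem~\ref{theorem6}, checking only that the structure obtained there is canonical, i.e. that it does not depend on the choice of a complement of $\mathfrak{z(g)}$. Since $\eta$ is a contact form and the center is nontrivial, the remark preceding Theorem~\ref{theorem6} gives $\mathfrak{z(g)}=\mathbb{R}\xi$. This is an ideal, so $\mathfrak{g}/\mathfrak{z(g)}$ is a Lie algebra. The projection $\pi:\mathfrak{g}\to\mathfrak{g}/\mathfrak{z(g)}$ restricts to a linear isomorphism $\mathfrak{h}=\ker\eta\to\mathfrak{g}/\mathfrak{z(g)}$. Here $\ker\eta=\xi^\perp$ is intrinsically determined by the structure, so no choice is involved.

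We transport $J=\varphi|_{\mathfrak{h}}$ and $h=g|_{\mathfrak{h}\times\mathfrak{h}}$ through this isomorphism. Equivalently, we define
\[\bar J(\pi X)=\pi(\varphi X),\qquad \bar h(\pi X,\pi Y)=g(X_{\mathfrak{h}},Y_{\mathfrak{h}}).\]
These are well defined because $\varphi\xi=0$ and $\varphi$ preserves $\ker\eta$. Under the identification, the bracket of $\mathfrak{g}/\mathfrak{z(g)}$ corresponds to $[X,Y]_{\mathfrak{h}}=[X,Y]+d\eta(X,Y)\xi$.

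We then verify the Kähler conditions as in Theorem~\ref{theorem6}. First, $\bar J^2=-I$ and $\bar h$ is $\bar J$-compatible, because $\varphi^2=-I$ and $g(\varphi X,\varphi Y)=g(X,Y)$ on $\mathfrak{h}$.

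Second, for $X,Y\in\mathfrak{h}$ we have $N_{\bar J}(X,Y)=(N_\varphi(X,Y))_{\mathfrak{h}}$. The $\xi$-components and the term $d\eta\otimes\xi$ are killed by the projection, and brackets differ from $[\cdot,\cdot]_{\mathfrak{h}}$ only by multiples of $\xi$. The right-hand side vanishes by the condition $N_\varphi(X,Y,Z)=0$ for $X,Y,Z\in\mathfrak{h}$ in Definition~\ref{def1}.

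Third, the fundamental form $\Omega=\Phi|_{\mathfrak{h}}$ satisfies $d\Omega(X,Y,Z)=d\Phi(X,Y,Z)=0$. This holds because in $d\Phi(X,Y,Z)$ the brackets enter only through $\Phi([X,Y],Z)$-type terms, and $\Phi(\xi,\cdot)=0$. The form $\Omega$ is nondegenerate since $\bar h$ is positive definite.

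The only point needing care, and the one I would flag, is the well-definedness and naturality. The bracket on the quotient must agree with $[\cdot,\cdot]_{\mathfrak{h}}$, and the Nijenhuis and $d\Phi$ identities must pass through the projection; this uses $\varphi\xi=0$ and $\Phi(\xi,\cdot)=0$. Both were already checked implicitly in the proof of Theorem~\ref{theorem6}, so the corollary follows.
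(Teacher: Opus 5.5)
Your proposal is correct and follows the paper's route. The corollary is read off the first half of the proof of Theorem~\ref{theorem6}: you identify $\mathfrak{g}/\mathfrak{z(g)}$ with $(\ker\eta,[\cdot,\cdot]_{\mathfrak{h}})$ and check $N_J=(N_\varphi)_{\mathfrak{h}}=0$ and $d\Omega=d\Phi|_{\mathfrak{h}}=0$. Your extra well-definedness checks (via $\varphi\xi=0$, $\Phi(\xi,\cdot)=0$ and the fact that $\ker\eta$ is intrinsic) simply make explicit the naturality the paper leaves implicit.
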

Furthermore, the one-to-one correspondence of transversely K\"ahler almost contact metric Lie algebras of maximal rank with nontrivial center and K\"ahler Lie algebras leads to the following Proposition.
\begin{proposition}\label{proposition-isom}
    Let $(\mathfrak{g}_1,\varphi_1,\xi_1,\eta_1,g_1)$ and $(\mathfrak{g}_2,\varphi_2,\xi_2,\eta_2,g_2)$ be two transversely K\"ahler almost contact metric Lie algebras obtained as the central extensions of K\"ahler Lie algebras $(\mathfrak{h}_1,J_1,h_1)$ and $(\mathfrak{h}_2,J_2,h_2)$ via symplectic $2$-forms $\omega_1$ and $\omega_2$, respectively. Then $(\mathfrak{g}_1,\varphi_1,\xi_1,\eta_1,g_1)$ and $(\mathfrak{g}_2,\varphi_2,\xi_2,\eta_2,g_2)$ are isomorphic if and only if there exists an isomorphism $f:\mathfrak{h}_1\to\mathfrak{h}_2$ of K\"ahler Lie algebras such that $\omega_2(fX,fY)=\omega_1(X,Y)$ for every $X,Y\in\mathfrak{h}_1$, namely $f$ is a symplectomorphism.
\end{proposition}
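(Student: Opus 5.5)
Both implications reduce to comparing the brackets and tensors on $\mathfrak{g}_i=\mathfrak{h}_i\oplus\mathbb{R}\xi_i$, where for $X,Y\in\mathfrak{h}_i$ we have $[X,Y]=[X,Y]_{\mathfrak{h}_i}-\omega_i(X,Y)\xi_i$ and $[X,\xi_i]=0$, as in \eqref{1}.

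\emph{Sufficiency.} Let $f:\mathfrak{h}_1\to\mathfrak{h}_2$ be an isomorphism of K\"ahler Lie algebras, i.e.\ a Lie algebra isomorphism with $f\circ J_1=J_2\circ f$ and $h_2(fX,fY)=h_1(X,Y)$, which in addition satisfies $f^*\omega_2=\omega_1$. I extend it to $F:\mathfrak{g}_1\to\mathfrak{g}_2$ by $F(X+t\xi_1)=fX+t\xi_2$. For $X,Y\in\mathfrak{h}_1$,
\[
F[X,Y]=f[X,Y]_{\mathfrak{h}_1}-\omega_1(X,Y)\xi_2=[fX,fY]_{\mathfrak{h}_2}-\omega_2(fX,fY)\xi_2=[FX,FY].
\]
Brackets involving $\xi_1$ vanish on both sides because $\xi_1,\xi_2$ are central. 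Hence $F$ is a Lie algebra isomorphism. Conditions (a)--(d) of the definition of isomorphism follow at once, since $\varphi_i,g_i,\eta_i$ are built from $J_i,h_i$ and the splitting $\mathfrak{h}_i\oplus\mathbb{R}\xi_i$, and $F$ preserves that splitting.

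\emph{Necessity.} Let $F:\mathfrak{g}_1\to\mathfrak{g}_2$ be an isomorphism of almost contact metric Lie algebras.
\begin{itemize}
\item From $\eta_1=\eta_2\circ F$ we get $F(\ker\eta_1)=\ker\eta_2$, that is $F(\mathfrak{h}_1)=\mathfrak{h}_2$. So $f:=F|_{\mathfrak{h}_1}$ is a linear isomorphism onto $\mathfrak{h}_2$.
\item Since $F$ is an isometry and $F\varphi_1=\varphi_2F$, we have $h_2(fX,fY)=h_1(X,Y)$ and $f\circ J_1=J_2\circ f$.
\item Apply $F$ to $[X,Y]=[X,Y]_{\mathfrak{h}_1}-\omega_1(X,Y)\xi_1$ and use $F\xi_1=\xi_2$:
\[
[fX,fY]=f[X,Y]_{\mathfrak{h}_1}-\omega_1(X,Y)\xi_2 .
\]
The left side also equals $[fX,fY]_{\mathfrak{h}_2}-\omega_2(fX,fY)\xi_2$. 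Comparing $\mathfrak{h}_2$-components gives $f[X,Y]_{\mathfrak{h}_1}=[fX,fY]_{\mathfrak{h}_2}$. Comparing $\xi_2$-components gives $\omega_2(fX,fY)=\omega_1(X,Y)$.
\end{itemize}
So $f$ is an isomorphism of K\"ahler Lie algebras and a symplectomorphism.

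There is no real obstacle. The one point needing care is that $F$ preserves the splittings $\mathfrak{h}_i\oplus\mathbb{R}\xi_i$, and this follows directly from (b) and (d); the decomposition of the bracket then separates the two conditions.
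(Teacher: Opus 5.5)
Your proposal is correct and follows the same route as the paper. You restrict the isomorphism to $\mathfrak{h}_1=\ker\eta_1$, apply it to $[X,Y]=[X,Y]_{\mathfrak{h}_1}-\omega_1(X,Y)\xi_1$, and compare horizontal and $\xi_2$-components; conversely, you extend $f$ by $\xi_1\mapsto\xi_2$. You are slightly more explicit than the paper in deriving $F(\mathfrak{h}_1)=\mathfrak{h}_2$ from condition (d) and in checking that $f$ intertwines $J_i$ and $h_i$.
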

\begin{proof}
    If $f:\mathfrak{g}_1\to\mathfrak{g}_2$ is an isomorphism of transversely K\"ahler almost contact metric Lie algebras, then $f\mathfrak(\mathfrak{h}_1)=\mathfrak{h}_2$ and, for every $X,Y\in\mathfrak{h}_1$, one has
    \[[fX,fY]_{\mathfrak{h}_2}-\omega_2(fX,fY)\xi_2=[fX,fY]=f[X,Y]=f([X,Y]_{\mathfrak{h}_1})-\omega_1(X,Y)f(\xi_1).\]
    Since $f(\xi_1)=\xi_2$, comparing the horizontal and vertical components, one has $[fX,fY]_{\mathfrak{h}_2}=f([X,Y]_{\mathfrak{h}_1})$ and $\omega_2(fX,fY)=\omega_1(X,Y)$. In particular $f:\mathfrak{h}_1\to\mathfrak{h}_2$ is an isomorphism of K\"ahler Lie algebras. Conversely, every isomorphism of K\"ahler Lie algebras, which is also a symplectomorphism, extends to an isomorphism of almost contact metric Lie algebras defined setting $f(\xi_1)=\xi_2.$
\end{proof}

\subsection{ Ricci tensors in the case of nontrivial center}\label{section-curvature}

\indent Now, we investigate the curvature relations between a K\"ahler Lie algebra ${(\mathfrak{h},J,h,\omega)}$ endowed with a symplectic structure $\omega$, and its  central extension $\mathfrak{g}:=\mathfrak{h}\oplus \mathbb{R}\xi$ with the structure ${(\varphi, \xi,\eta,g)}$.\\

If {$G$} and {$H$} are Lie groups with Lie algebras $\mathfrak{g}$ and $\mathfrak{h}$ respectively, then $G$ and $H$ are endowed with left-invariant structures {$(\varphi,\xi,\eta,g)$} and {$(J,h,\omega)$} satisfying analogous properties. Denote by $\nabla$ and $\nabla^{\mathfrak{h}}$ the Levi-Civita connections on $G$ and $H$, respectively.\\

We use the Koszul formula given, for left invariant vector fields $X,Y,Z$, by
\[ 2g(\nabla_X Y,Z)=g([X,Y],Z)-g([Y,Z],X)+g([Z,X],Y).\]
Using the fact that $\xi$ is central, this implies 
\begin{eqnarray}
\nabla_{\xi}\xi=0, ~~~ \nabla_X \xi=\nabla_{\xi}X \in \mathfrak{h}\quad\forall X\in\mathfrak{h}.\label{1.3}
\end{eqnarray}
Furthermore, for every $X,Y,Z\in\mathfrak{h}$
\[g(\nabla_X Y,Z)=g(\nabla^\mathfrak{h}_X Y,Z)\]
and using (\ref{1}), one can write
\[2g(\nabla_XY,\xi)=g([X,Y],\xi)=-\omega(X,Y).\]
Then, for all $X,Y \in \mathfrak{h}$ we get
\begin{eqnarray}
\nabla_{X}Y=\nabla_X^{\mathfrak{h}}Y -\frac{1}{2}\omega(X,Y)\xi. \label{1.5}
\end{eqnarray}

\begin{lemma} \label{lem9}
     Let $(\mathfrak{g},\varphi,\xi,\eta,g)$ be a transversely K\"ahler almost contact metric Lie algebra of maximal rank as a central extension of a K\"ahler Lie algebra $(\mathfrak{h},J,h,\omega)$ with a symplectic structure $\omega$. Let $R$ and $R^{\mathfrak{h}}$ be the curvature tensors of $\nabla$ and $\nabla^\mathfrak{h}$, respectively. Then:
     \begin{eqnarray}\nonumber
 R(X,Y,Z,W)&=&R^{\mathfrak{h}}(X,Y,Z,W)+\frac{1}{4}\Big(2\omega(X,Y)\omega(Z,W)\\ \label{cg1.1}
    &&-\omega(Y,Z)\omega(X,W)+\omega(X,Z)\omega(Y,W) \Big),\\ \label{cg1.2}
 R(X,Y,Z,\xi)&=&-\frac{1}{2}\Big(\omega(X,\nabla_Y^{\mathfrak{h}}Z)-\omega(Y,\nabla_X^{\mathfrak{h}}Z)-\omega([X,Y]_{\mathfrak{h}},Z)\Big),\\ \label{cg1.3}
 R(X,\xi,\xi,Z)&=& \frac{1}{4}\sum_i\omega(X,X_i)\omega(Z,X_i),  
\end{eqnarray}
for all $X,Y,Z,W \in \mathfrak{h}$, where  and $\{X_i\}_{i=1,\ldots,2n}$ denotes an orthonormal basis of the Lie algebra $\mathfrak{h}$.
 \end{lemma}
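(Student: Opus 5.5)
The plan is a direct computation from the Levi-Civita connection of $\mathfrak{g}$, expressed through $\nabla^{\mathfrak{h}}$ and $\omega$. I use the conventions $R(X,Y)Z=\nabla_X\nabla_YZ-\nabla_Y\nabla_XZ-\nabla_{[X,Y]}Z$ and $R(X,Y,Z,W)=g(R(X,Y)Z,W)$; the signs in the statement should be checked against these.

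The first step is to complete the description of $\nabla$ by computing $\nabla_X\xi$ for $X\in\mathfrak{h}$. Let $P:\mathfrak{h}\to\mathfrak{h}$ be the $h$-skew-symmetric endomorphism defined by $h(PX,Z)=\omega(X,Z)$, that is, $PX=\sum_i\omega(X,X_i)X_i$. By \eqref{1.3}, $\nabla_X\xi$ is horizontal. Using metric compatibility and \eqref{1.5},
\[
g(\nabla_X\xi,Z)=-g(\xi,\nabla_XZ)=\tfrac12\omega(X,Z).
\]
Hence $\nabla_X\xi=\nabla_\xi X=\tfrac12 PX$, and also $\nabla_\xi\xi=0$.

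Next I expand $R(X,Y)Z$ for $X,Y,Z\in\mathfrak{h}$, using \eqref{1.5} twice and the bracket \eqref{1}, $[X,Y]=[X,Y]_{\mathfrak{h}}-\omega(X,Y)\xi$. The individual terms are:
\begin{itemize}
\item $\nabla_X\nabla_YZ=\nabla^{\mathfrak{h}}_X\nabla^{\mathfrak{h}}_YZ-\tfrac12\omega(X,\nabla^{\mathfrak{h}}_YZ)\xi-\tfrac14\omega(Y,Z)PX$;
\item the same expression with $X$ and $Y$ interchanged;
\item $\nabla_{[X,Y]}Z=\nabla^{\mathfrak{h}}_{[X,Y]_{\mathfrak{h}}}Z-\tfrac12\omega([X,Y]_{\mathfrak{h}},Z)\xi-\tfrac12\omega(X,Y)PZ$.
\end{itemize}

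Pairing the resulting expression with $W\in\mathfrak{h}$ gives \eqref{cg1.1}. The horizontal parts combine into $R^{\mathfrak{h}}(X,Y,Z,W)$, and $h(PX,W)=\omega(X,W)$ turns the three $P$-terms into the quadratic expression in $\omega$ with coefficients $-\tfrac14,\tfrac14,\tfrac12$. Pairing instead with $\xi$ keeps only the three vertical terms, which is exactly \eqref{cg1.2}.

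For \eqref{cg1.3}, I use that $\xi$ is central and $\nabla_\xi\xi=0$, so
\[
R(X,\xi)\xi=-\nabla_\xi\nabla_X\xi=-\tfrac12\nabla_\xi(PX)=-\tfrac14P^2X .
\]
Skew-symmetry of $P$ then gives
\[
g(-\tfrac14P^2X,Z)=\tfrac14h(PX,PZ)=\tfrac14\sum_i\omega(X,X_i)\omega(Z,X_i).
\]
The only real obstacle is bookkeeping of signs and factors of $\tfrac12$ in the mixed terms, especially the contribution $-\omega(X,Y)\xi$ of the bracket inside $\nabla_{[X,Y]}Z$, which produces the coefficient $\tfrac12$ rather than $\tfrac14$. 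I would verify it by checking the first Bianchi identity on the $\omega$-quadratic part.
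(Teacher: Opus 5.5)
Your proposal is correct and follows essentially the same route as the paper. Like the paper, you expand $R(X,Y)Z$ via $\nabla_XY=\nabla^{\mathfrak h}_XY-\tfrac12\omega(X,Y)\xi$ and the bracket $[X,Y]=[X,Y]_{\mathfrak h}-\omega(X,Y)\xi$, pair the result with $W$ and with $\xi$, and obtain the last identity from $R(X,\xi)\xi=-\nabla_\xi\nabla_X\xi$. Your operator $P$, with $\nabla_X\xi=\tfrac12 PX$, simply packages the paper's identity $g(\nabla_X\xi,W)=\tfrac12\omega(X,W)$, and your appeal to the skew-symmetry of $P$ plays the role of the paper's step $R(X,\xi,\xi,Z)=g(\nabla_X\xi,\nabla_Z\xi)$.
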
  
\begin{proof}
Using \eqref{1.5} and \eqref{1.3}, for every $X,Y,Z\in\mathfrak{h}$, we compute
\begin{eqnarray*}\nonumber
R(X,Y)Z&=&\nabla_X\left(\nabla^\mathfrak{h}_YZ-\frac12\omega(Y,Z)\xi\right)-\nabla_Y\left(\nabla^\mathfrak{h}_XZ-\frac12\omega(X,Z)\xi\right)\\
&&-\nabla_{[X,Y]_\mathfrak{h}-\omega(X,Y)\xi}Z\\
&=& R^{\mathfrak{h}}(X,Y)Z
-\frac{1}{2}\Big(\omega(X,\nabla_Y^{\mathfrak{h}}Z)- \omega(Y,\nabla_X^{\mathfrak{h}}Z)-\omega({[X,Y]_{{\mathfrak{h}}}},Z)\Big)\xi\\
&&{}-\frac12\left(\omega(Y,Z)\nabla_X\xi-\omega(X,Z)\nabla_Y\xi-2\omega(X,Y)\nabla_Z\xi\right).
\end{eqnarray*}
Taking in the above equation the scalar product with $W\in\mathfrak{h}$, and using $g(\nabla_X\xi,W)=\frac12\omega(X,W)$, we get \eqref{cg1.1}. Taking the scalar product with $\xi$, we get \eqref{cg1.2}.
Now, for any $X\in\mathfrak{h}$, being $\nabla_\xi\xi=0$ and $[X,\xi]=0$, we have
\[R(X,\xi)\xi=-\nabla_\xi\nabla_X\xi.\]
We take the product with $Z\in\mathfrak{h}$. Using $\nabla_\xi Z=\nabla_ Z\xi $, and an orthonormal basis $\{X_i\}_{i=1,\ldots,2n}$ of $\mathfrak{h}$, we get
\begin{align*}R(X,\xi,\xi,Z)&=g( \nabla_X\xi,\nabla_Z\xi)=\sum_ig( \nabla_X\xi,X_i)g( \nabla_Z\xi,X_i)\\&=\frac14\sum_i\omega(X,X_i)\omega(Z,X_i),
\end{align*}
which proves \eqref{cg1.3}. 
\end{proof}

By using Lemma~\ref{lem9}, we obtain the following proposition.
 \begin{proposition} \label{proposition10}
        Let $(\mathfrak{g},\varphi,\xi,\eta,g)$ be a transversely K\"ahler almost contact metric Lie algebra of maximal rank as a central extension of a K\"ahler Lie algebra $(\mathfrak{h},J,h,\omega)$ with a symplectic structure $\omega$. The Ricci curvatures of the extended Lie algebra are given by:
\begin{align}
 \rho(Y,Z)&=\rho^{\mathfrak{h}}(Y,Z)+\frac{1}{2} \sum_i \omega(X_i,Y)\omega(Z,X_i), \label{cr.1} \\
 \rho(Y,\xi)&=\frac{1}{2} \sum_i \left(\omega(X_i,\nabla_{X_i}^{\mathfrak{h}}Y)+\omega(\nabla_{X_i}^{\mathfrak{h}}X_i,Y)\right), \label{cr.2} \\
\rho(\xi,\xi)&=\frac{1}{4} \sum_{i,j} \omega(X_i,X_j)^2, \label{cr.3}
\end{align}
for all $Y,Z \in \mathfrak{h}$, where $\{X_i\}_{i=1,\ldots,2n}$ denotes an orthonormal basis of $\mathfrak{h}$, and $\rho^\mathfrak{h}$ is the Ricci tensor of the K\"ahler Lie algebra.
 \end{proposition}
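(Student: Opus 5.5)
The plan is to compute the Ricci tensor by tracing the curvature identities of Lemma~\ref{lem9} against the orthonormal basis $\{X_1,\dots,X_{2n},\xi\}$ of $\mathfrak{g}$. We take the convention $\rho(Y,Z)=\sum_a R(E_a,Y,Z,E_a)$, which is consistent with \eqref{cg1.3}, where $R(X,\xi,\xi,X)\geq 0$ is a sectional-curvature-type term. With this convention,
\[\rho(Y,Z)=\sum_i R(X_i,Y,Z,X_i)+R(\xi,Y,Z,\xi).\]
Each of the three formulas then follows by splitting into horizontal and vertical contributions and substituting the corresponding identity.

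\textbf{Formula \eqref{cr.1}.} Put $X=W=X_i$ in \eqref{cg1.1} and sum over $i$.
\begin{itemize}
\item The $R^{\mathfrak h}$ part gives $\rho^{\mathfrak h}(Y,Z)$.
\item The bracket gives $\frac14\sum_i\big(2\omega(X_i,Y)\omega(Z,X_i)-\omega(Y,Z)\omega(X_i,X_i)+\omega(X_i,Z)\omega(Y,X_i)\big)$. The middle term vanishes by skew-symmetry. The last term equals $\omega(X_i,Y)\omega(Z,X_i)$ after two sign flips. The bracket therefore contributes $\frac34\sum_i\omega(X_i,Y)\omega(Z,X_i)$.
\item The vertical term is $R(\xi,Y,Z,\xi)=R(Y,\xi,\xi,Z)$ by the symmetries of $R$. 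By \eqref{cg1.3} this equals $\frac14\sum_i\omega(Y,X_i)\omega(Z,X_i)=-\frac14\sum_i\omega(X_i,Y)\omega(Z,X_i)$.
\end{itemize}
The total coefficient is $\frac34-\frac14=\frac12$, which gives \eqref{cr.1}.

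\textbf{Formula \eqref{cr.3}.} We have $\rho(\xi,\xi)=\sum_i R(X_i,\xi,\xi,X_i)=\frac14\sum_{i,j}\omega(X_i,X_j)^2$ directly from \eqref{cg1.3}.

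\textbf{Formula \eqref{cr.2}.} Write $\rho(Y,\xi)=\sum_i R(X_i,Y,\xi,X_i)+R(\xi,Y,\xi,\xi)$. The last term is zero. By pair symmetry, $R(X_i,Y,\xi,X_i)=R(\xi,X_i,X_i,Y)=R(X_i,Y,X_i,\xi)$. Apply \eqref{cg1.2} with $X=X_i$ and $Z=X_i$:
\[R(X_i,Y,X_i,\xi)=-\tfrac12\Big(\omega(X_i,\nabla^{\mathfrak h}_YX_i)-\omega(Y,\nabla^{\mathfrak h}_{X_i}X_i)-\omega([X_i,Y]_{\mathfrak h},X_i)\Big).\]
Expand $[X_i,Y]_{\mathfrak h}=\nabla^{\mathfrak h}_{X_i}Y-\nabla^{\mathfrak h}_YX_i$. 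The two $\nabla^{\mathfrak h}_YX_i$ terms combine to $\omega(X_i,\nabla^{\mathfrak h}_YX_i)+\omega(\nabla^{\mathfrak h}_YX_i,X_i)=0$. What remains is
\[-\tfrac12\Big(-\omega(Y,\nabla^{\mathfrak h}_{X_i}X_i)-\omega(\nabla^{\mathfrak h}_{X_i}Y,X_i)\Big)=\tfrac12\Big(\omega(\nabla^{\mathfrak h}_{X_i}X_i,Y)+\omega(X_i,\nabla^{\mathfrak h}_{X_i}Y)\Big)\]
after the sign flips. Summing over $i$ yields \eqref{cr.2}.

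The main obstacle is keeping the sign conventions consistent. The convention for $R(X,Y,Z,W)$ used in Lemma~\ref{lem9}, the ordering of arguments in the Ricci trace, and the pair symmetries must all agree. I would fix the convention first by checking it against \eqref{cg1.3}: $\rho(\xi,\xi)$ must come out nonnegative, since for a central $\xi$ the vertical Ricci curvature is $\|\nabla\xi\|^2\ge0$. That check determines the trace convention used above. The remaining steps are then routine index manipulations using only the skew-symmetry of $\omega$.
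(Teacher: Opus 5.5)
Your strategy is the same as the paper's: trace the identities of Lemma~\ref{lem9} over the orthonormal basis $\{X_1,\dots,X_{2n},\xi\}$ with $\rho(Y,Z)=\sum_a R(E_a,Y,Z,E_a)$. Your derivations of \eqref{cr.1} and \eqref{cr.3} are correct and coincide with the paper's, including the $\frac34-\frac14$ bookkeeping. The derivation of \eqref{cr.2}, however, contains two false steps whose sign errors happen to cancel.

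First, the chain $R(X_i,Y,\xi,X_i)=R(\xi,X_i,X_i,Y)=R(X_i,Y,X_i,\xi)$ is wrong in its second equality. By pair symmetry $R(X_i,Y,X_i,\xi)=R(X_i,\xi,X_i,Y)=-R(\xi,X_i,X_i,Y)$. So in fact $R(X_i,Y,\xi,X_i)=-R(X_i,Y,X_i,\xi)$, which is just antisymmetry in the last two slots. Second, your final \emph{sign flips} step is wrong. Each $\omega$-factor changes sign when its arguments are swapped, so
\[\tfrac12\big(\omega(Y,\nabla^{\mathfrak h}_{X_i}X_i)+\omega(\nabla^{\mathfrak h}_{X_i}Y,X_i)\big)=-\tfrac12\big(\omega(\nabla^{\mathfrak h}_{X_i}X_i,Y)+\omega(X_i,\nabla^{\mathfrak h}_{X_i}Y)\big),\]
not $+\tfrac12(\cdots)$.

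The correct chain runs as follows. Apply \eqref{cg1.2} with $X=Z=X_i$, together with the cancellation of the $\nabla^{\mathfrak h}_YX_i$ terms that you did observe correctly. This gives $R(X_i,Y,X_i,\xi)=-\tfrac12\big(\omega(\nabla^{\mathfrak h}_{X_i}X_i,Y)+\omega(X_i,\nabla^{\mathfrak h}_{X_i}Y)\big)$. Then $\rho(Y,\xi)=\sum_iR(X_i,Y,\xi,X_i)=-\sum_iR(X_i,Y,X_i,\xi)$ yields \eqref{cr.2}. This is exactly what the paper does: it writes $R(X_i,Y,\xi,X_i)$ as minus the right-hand side of \eqref{cg1.2}, then substitutes $\nabla^{\mathfrak h}_YX_i=\nabla^{\mathfrak h}_{X_i}Y-[X_i,Y]_{\mathfrak h}$. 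So the formula you arrive at is right, but the argument as written is not, and both signs need to be fixed. You yourself flagged sign bookkeeping as the main obstacle, and this is precisely where it went wrong.
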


 \begin{proof}
Taking $X_i$ instead of $X$ and $W$ in (\ref{cg1.1}), one gets
\begin{eqnarray*}
R(X_i,Y,Z,X_i)&=&R^{\mathfrak{h}}(X_i,Y,Z,X_i)+\frac{1}{4}\Big(2\omega(X_i,Y)\omega(Z,X_i)\\
    &&{}-\omega(Y,Z)\omega(X_i,X_i)+\omega(X_i,Z)\omega(Y,X_i) \Big)\\
    &=& R^\mathfrak{h}(X_i,Y,Z,X_i)+\frac34\omega(X_i,Y)\omega(Z,X_i).
\end{eqnarray*}
Using also \eqref{cg1.3}, we compute
\begin{eqnarray*}    \rho(Y,Z)&=&\sum_iR(X_i,Y,Z,X_i)+R(\xi,Y,Z,\xi)\\
    &=&\rho^\mathfrak{h}(Y,Z)+\frac34\sum_i\omega(X_i,Y)\omega(Z,X_i)+\frac14\sum_i\omega(Y,X_i)\omega(Z,X_i)\\
    &=&\rho^\mathfrak{h}(Y,Z)+\frac12\sum_i\omega(X_i,Y)\omega(Z,X_i)
\end{eqnarray*}
which proves equation (\ref{cr.1}).
Analogously, replacing  $X$ and $Z$ with $X_i$ in (\ref{cg1.2}), we have
\begin{eqnarray*}    \rho(Y,\xi)&=&\sum_iR(X_i,Y,\xi,X_i)\\
&=&\frac12\sum_i\left(\omega(X_i,\nabla^\mathfrak{h}_YX_i)-\omega(Y,\nabla^\mathfrak{h}_{X_i}X_i)-\omega([X_i,Y]_\mathfrak{h},X_i)\right)\\
&=&\frac12\sum_i\left(\omega(X_i,\nabla^\mathfrak{h}_{X_i}Y-[X_i,Y]_\mathfrak{h})-\omega(Y,\nabla^\mathfrak{h}_{X_i}X_i)-\omega([X_i,Y]_\mathfrak{h},X_i)\right)\\
&=&\frac12\sum_i\left(\omega(X_i,\nabla^\mathfrak{h}_{X_i}Y)+\omega(\nabla^\mathfrak{h}_{X_i}X_i,Y)\right).
\end{eqnarray*}
Finally,  equation (\ref{cr.3}) follows immediately from \eqref{cg1.3}.
\end{proof}

\section{\ensuremath{5}-dimensional transversely K\"ahler almost contact metric Lie algebras of maximal rank}

In this section we investigate $5$–dimensional transversely K\"ahler almost contact metric Lie algebras of maximal rank, distinguishing again the cases whether the center is trivial or nontrivial. In each case we establish the conditions under which the Lie algebras admit an $\eta$-Einstein structure.

\subsection{The case of trivial center in dimension \ensuremath{5}}

\begin{theorem} \label{theorem13}
    If a $5$-dimensional transversely K\"ahler almost contact metric Lie algebra $\mathfrak{g}$ of maximal rank has trivial center, then it is isomorphic to one of the following Lie algebras:     $\mathfrak{aff}(\mathbb{R}) \times \mathfrak{sl}(2,\mathbb{R})$, $\mathfrak{aff}(\mathbb{R}) \times \mathfrak{su}(2)$, or a non-unimodular Lie algebra $\mathfrak{g}_0\simeq \mathbb{R}^2 \ltimes\mathfrak{h}_3$. Furthermore, the structure is always $\alpha$-Sasakian. 
\end{theorem}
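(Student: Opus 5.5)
The strategy is to use the decomposition of Proposition~\ref{prop6}, $\mathfrak{g}=\ker\ad_\xi\oplus\im\ad_\xi$, together with Proposition~\ref{prop8}. The aim is to show that the structure is $\alpha$-Sasakian. Once that is established, the list of Lie algebras follows from Theorem~\ref{theo-Sasaki-trivial} after rescaling $(\xi,\eta,g)$ homothetically to $\alpha=1$.

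\textbf{Step 1: dimensions.} Since $\ad_\xi$ is skew-symmetric, commutes with $\varphi$, and the center is trivial, $\im\ad_\xi$ is a nonzero even-dimensional $\varphi$-invariant subspace of $\ker\eta$.

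\textbf{Step 2: excluding $\im\ad_\xi=\ker\eta$.} In this case $\mathfrak{g}'=\mathfrak{g}$. By Diatta's result quoted in the proof of Lemma~\ref{lemma-calabi-yau}, $\mathfrak{g}\simeq\mathfrak{sl}(2,\mathbb{R})\ltimes\mathbb{R}^2$. I would rule this out directly. Here $\xi$ has $\ker\ad_\xi=\mathbb{R}\xi$ and $\ad_\xi$ is skew-symmetric with nonzero eigenvalues on a $4$-dimensional space. Then $\xi$ must be an elliptic element of $\mathfrak{sl}(2)$ plus a translation. Its eigenvalues on $\mathbb{R}^2$ equal those on the $\mathfrak{sl}(2)$-part up to factor $1/2$: they are $\pm i\theta$ on $\mathfrak{sl}(2)/\mathbb{R}\xi$ and $\pm i\theta/2$ on $\mathbb{R}^2$. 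One then checks that $\mathcal{L}_\xi d\eta=0$ combined with $d\eta$ nondegenerate on $\ker\eta$ forces a contradiction: $d\eta$ must pair eigenspaces with opposite eigenvalues, while brackets $[\mathbb{R}^2,\mathbb{R}^2]=0$ and $\eta$ vanishes on $[\mathfrak{sl}(2),\mathbb{R}^2]\subset\mathbb{R}^2$. Hence $\dim\im\ad_\xi=2$ and $\dim\ker\ad_\xi=3$.

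\textbf{Step 3: the subalgebra $\ker\ad_\xi$.} By Proposition~\ref{prop8}(1), $\mathfrak{k}=\ker\ad_\xi$ is a $3$-dimensional transversely K\"ahler subalgebra of maximal rank with $\xi$ central. By Theorem~\ref{theorem6} it is a central extension of a $2$-dimensional K\"ahler Lie algebra by a symplectic form. In dimension $2$, every $2$-form is a multiple of the K\"ahler form $\Omega$, and the almost-complex condition is automatic. So on $\mathfrak{k}\cap\ker\eta$ we get $d\eta=2\alpha\Phi$ for some $\alpha\neq0$, and $\mathfrak{k}\simeq\mathfrak{h}_3$ or $\mathfrak{aff}(\mathbb{R})\times\mathbb{R}$.

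\textbf{Step 4: the $2$-plane $V=\im\ad_\xi$.} Here $V$ is $\varphi$-invariant, and $\ad_\xi|_V=U$ is skew-symmetric and nonsingular, so $U=c\varphi|_V$ with $c\ne0$. Invariance $\mathcal{L}_\xi d\eta=0$ forces $d\eta(\mathfrak{k}\cap\ker\eta,V)=0$, as in the proof of Proposition~\ref{prop8}. For $Y\in V$, the Jacobi identity gives
\[
d\eta(Y,\varphi Y)=-\eta([Y,\varphi Y]),
\]
and writing $\varphi Y=[\xi,Y/c]$ yields $\eta([Y,[\xi,Y]])=-g(\xi,\ad_Y\ad_\xi Y)$. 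Using skew-symmetry of $\ad_\xi$ together with Jacobi, this should be computable as $c|Y|^2$ up to sign, so $d\eta|_V=2\beta\Phi|_V$ with $\beta$ proportional to $c$. Hence $d\eta=2\alpha\Phi$ on $\mathfrak{k}\cap\ker\eta$ and $2\beta\Phi$ on $V$, and the two pieces are orthogonal for $d\eta$.

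\textbf{Step 5: comparing $\alpha$ and $\beta$.} This is the key obstacle. I would use $d\Phi=0$ and $d(d\eta)=0$ evaluated on triples $(X,Y,\varphi Y)$ with $X\in\mathfrak{k}\cap\ker\eta$ and $Y\in V$. Proposition~\ref{prop8}(2) gives $[X,V]\subset V$; write $\ad_X|_V=aI+b\varphi$ plus a trace-free symmetric part. The normality condition $N_\varphi(X,Y,Z)=0$ on horizontal vectors kills the symmetric part. The closedness of $\Phi$ and of $d\eta$ then give two linear relations. One uses the $\Omega$-component of $[X,\varphi X]$ inside $\mathfrak{k}$ (the $\alpha$ term) and the other uses the $a$-coefficients; together they should force $\alpha=\beta$. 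Then $d\eta=2\alpha\Phi$ on all of $\ker\eta$. Since also $d\eta(\xi,\cdot)=0$, normality follows from the transversely K\"ahler conditions plus $d\eta$ being $\varphi$-invariant. So the structure is $\alpha$-Sasakian, and Theorem~\ref{theo-Sasaki-trivial} finishes the proof.
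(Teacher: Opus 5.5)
The gap is in Step~5, and it cannot be closed. Write $\mathfrak{k}_0=\ker\ad_\xi\cap\ker\eta$ and $V=\im\ad_\xi$. For $X\in\mathfrak{k}_0$, write $\ad_X|_V=a_XI+b_X\varphi|_V$. This form comes from the Jacobi identity $[\ad_\xi,\ad_X]=0$ together with $\ad_\xi|_V=c\,\varphi|_V$. It does not come from normality, which only relates the $\varphi$-anticommuting parts of $\ad_X$ and $\ad_{\varphi X}$. For $Y\in V$, write $[Y,\varphi Y]=W-d\eta(Y,\varphi Y)\xi$ with $W\in\mathfrak{k}_0$. The two conditions you propose to use then read
\[
d\Phi(X,Y,\varphi Y)=2a_X|Y|^2-\Phi(W,X)=0,\qquad d(d\eta)(X,Y,\varphi Y)=4\beta a_X|Y|^2-2\alpha\,\Phi(W,X)=0.
\]
Together they give only $(\alpha-\beta)\,a_X=0$. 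If $a_X=0$ for every $X\in\mathfrak{k}_0$ (equivalently $[V,V]\subset\mathbb{R}\xi$), nothing relates $\alpha$ to $\beta$. Your Step~4 claim that $\beta$ is determined by $c$ does not rescue this. The Jacobi identity on $(\xi,Y,\varphi Y)$ is vacuous, and the claim is false: in the example below $c=-1$ while $\beta=k/2$ is arbitrary.

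In that subcase the conclusion actually fails, so the statement's last sentence is not correct as stated. The paper's proof has the same hole. Its ``second system'' combined with $a_6=-2c_4$, $b_6=2c_2$ yields only $c_2(k_1-k_6)=c_4(k_1-k_6)=0$. Yet its solutions (A1), (A2), (B1), (B2) all have $c_2=c_4=0$; in its notation $a_{e_1}=-c_2$ and $a_{e_2}=-c_4$. Here is a concrete counterexample. Take an orthonormal basis $e_1,\dots,e_5$ with $\xi=e_5$, $\eta=e^5$, $\varphi e_1=-e_2$, $\varphi e_2=e_1$, $\varphi e_3=-e_4$, $\varphi e_4=e_3$, and nonzero brackets
\[
[e_1,e_2]=e_1-e_5,\quad [e_1,e_3]=[e_5,e_3]=e_4,\quad [e_1,e_4]=[e_5,e_4]=-e_3,\quad [e_3,e_4]=-k\,e_5,\quad k>0.
\]
This is $\mathfrak{aff}(\mathbb{R})\times\mathfrak{sl}(2,\mathbb{R})$, with factors $\operatorname{span}\{e_2,e_1-e_5\}$ and $\operatorname{span}\{e_3,e_4,e_5\}$. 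It is the paper's case (A1) with $c_3=k_1=1$ and $f_4=0$, but with $k_6=k$. One checks directly that $d\Phi=0$, $N_\varphi=0$, the center is trivial and $\eta\wedge(d\eta)^2\neq0$. However $\Phi=e^{12}+e^{34}$ while $d\eta=e^{12}+k\,e^{34}$, so for $k\neq1$ the structure is quasi Sasakian but not $\alpha$-Sasakian.

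What your argument does establish is that the structure is always quasi Sasakian, since $d\eta$ is $\varphi$-invariant on $\mathfrak{k}_0\oplus V$. The list of Lie algebras can also be recovered along your lines. Replace $g|_V$ by $|\beta/\alpha|\,g|_V$ and $\varphi|_V$ by $\operatorname{sign}(\beta/\alpha)\,\varphi|_V$. This gives an $\alpha$-Sasakian structure on the same Lie algebra, and Theorem~\ref{theo-Sasaki-trivial} then applies after a homothety.
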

\begin{proof}
    
 Let $(\mathfrak{g}, \varphi, \eta, \xi,g)$ be a $5$-dimensional transversely K\"ahler almost contact metric Lie algebra  of maximal rank and with trivial center.

Assume $\mathfrak{g}'=\mathfrak{g}$.
As stated in [\citenum{Diatta}, Sect.5] the only contact Lie algebra with trivial center and $\mathfrak{g}'=\mathfrak{g}$ is the 
semidirect product $\mathfrak{sl}(2,\mathbb{R})\ltimes\mathbb{R}^2,$ where $\mathfrak{sl}(2,\mathbb{R})$ acts on $\mathbb{R}^2$ by matrix multiplication. In \cite[Theorem 9]{AF} it is showed that this Lie algebra does not admit any Sasakian structure. Arguing exactly in the same way as in \cite{AF}, one obtains that there is no transversely K\"ahler a.c.m. structure of maximal rank on the Lie algebra $\mathfrak{sl}(2,\mathbb{R}) \ltimes \mathbb{R}^2$.

Assume now $\mathfrak{g}'\neq \mathfrak{g}$.
For this case,
$$\dim\ker(\ad_\xi)|_{\ker\eta}=\dim\im(\ad_\xi)=2.$$
An orthonormal basis $\{e_1, \ldots, e_4\}$ of $\ker \eta$ can easily be chosen so that $\varphi|_{\ker \eta}$ has the matrix form
\[
\varphi|_{\ker\eta} = \begin{pmatrix}
0 & 1 & 0 & 0 \\
-1 & 0 & 0 & 0 \\
0 & 0 & 0 & 1 \\
0 & 0 & -1 & 0
\end{pmatrix}
\]
with $\ker\ad_\xi=Span\lbrace\xi,e_1,e_2\rbrace$, $\im\ad_\xi=Span\lbrace e_3,e_4\rbrace$. In particular, the fundamental $2$-form is $\Phi=e^{12}+ e^{34}$. Continuing in this basis, and considering that $\ad_\xi$ is skew symmetric, we have
 \[
(\ad_\xi)|_{\ker\eta} = \begin{pmatrix}
0 & 0 & 0 & 0 \\
0 & 0 & 0 & 0 \\
0 & 0 & 0 & -b \\
0 & 0 &b & 0
\end{pmatrix}
\]  
with $b\ne 0$. Applying the Jacobi identity,
\begin{align*}
 0&=[\xi,[e_3,e_4]]+[e_4,[\xi,e_3]]-[e_3,[\xi,e_4]] \\
  &=[\xi,[e_3,e_4]]+[e_4,b e_4]-[e_3,-b e_3] =[\xi,[e_3,e_4]],
\end{align*}  
and thus $[e_3,e_4]\in\ker\ad_\xi$.
Using also Proposition \ref{prop8},   
one can write 
\begin{align}
[e_1,e_2] &= -a_1 e_1 - b_1 e_2 - k_1 \xi, \notag\\ [e_1,e_3] &= -c_2 e_3 - f_2 e_4, \notag \\
[e_1,e_4] &= -c_3 e_3 - f_3 e_4,\notag\\
[e_2,e_3] &= -c_4 e_3 - f_4 e_4, \label{eqnk} \\
[e_2,e_4] &= -c_5 e_3 - f_5 e_4, \notag\\ [e_3,e_4] &= -a_6 e_1 - b_6 e_2 - k_6 \xi, \notag \\
[e_3,\xi] &= - b e_4, \notag\\ [e_4,\xi] &= b e_3, \notag
\end{align}
  where $k_1\ne0$ and $k_6\ne0$, since $d\eta = k_1 e^{12} + k_6 e^{34}$ and $\eta$ is a contact form. We denote by $\{e^1, e^2, e^3, e^4, e^5=\eta\}$ the dual basis of $\{e_1, e_2, e_3, e_4, e_5=\xi\}$.

Since $b \neq 0$, we can assume $b = \pm 1$. This is possible up to a a homothetic deformation of type 
\begin{equation}\label{deformation}
    \varphi' = \varphi,
    \quad \xi' = \pm\tfrac{1}{b}\xi, \quad\eta' = \pm b  \eta,\quad  g' = b^2 g,
\end{equation}
 and obviously we work with a basis orthonormal with respect to the new metric. The subsequent analysis is divided into two distinct cases: \textbf{Case A:} $b = 1$ and \textbf{Case B:} $b = -1$.  \smallskip

We first focus on \textbf{Case A}. Using \eqref{eqnk}, we get
\begin{align}
 &de^1=a_1e^{12}+a_6e^{34},\nonumber \\
  &de^2=b_1e^{12}+b_6e^{34},\nonumber \\
 &de^3=c_2e^{13}+c_3e^{14}+c_4e^{23}+c_5e^{24}-e^{45}, \label{eq9'}\\ &de^4=f_2e^{13}+f_3e^{14}+f_4e^{23}+f_5e^{24}+e^{35},\nonumber \\
   &de^5=k_1e^{12}+k_6e^{34}.\nonumber 
\end{align}
First notice that $d\Phi=0$ implies
\begin{equation}\label{dphi=0}
a_6+c_4+f_5=0,\qquad -b_6+c_2+f_3=0.
\end{equation}
Imposing $d^2e^k=0, ~k=1,...,5$, one gets three systems of equations.

\begin{enumerate}
    \item First system:
   \begin{eqnarray*}
       f_2=-c_3,\quad f_3=c_2,\quad f_4=-c_5, \quad f_5=c_4, \label{eq10'}
   \end{eqnarray*}
   which, together with \eqref{dphi=0}, gives
   \begin{equation}\label{dphi=0A}
   a_6=-2c_4,\qquad b_6=2c_2.
   \end{equation}
  \item Second system:
   \begin{eqnarray*}
       a_6=-2c_4\frac{k_6}{k_1}, \quad  b_6=2c_2\frac{k_6}{k_1}. \label{eqk''}
   \end{eqnarray*}
   which, together with \eqref{dphi=0A} gives $k_1=k_6$. In particular $d\eta=k_1\Phi$, and 
 the structure equations (\ref{eq9'}) of $\mathfrak{g}$ can be rewritten as 
   \begin{align*}
 &de^1=a_1e^{12}-2c_4e^{34}, \\
  &de^2=b_1e^{12}+2c_2e^{34}, \\
 &de^3=c_2e^{13}+c_3e^{14}+c_4e^{23}-f_4e^{24}-e^{45}, \label{eqnk'}\\ &de^4=-c_3e^{13}+c_2e^{14}+f_4e^{23}+c_4e^{24}+e^{35}, \\
   &de^5=k_1e^{12}+k_1e^{34}.
\end{align*}  

   \item Third system, involving coefficients in the above structure equations:
 \[
   \begin{alignedat}{3} 
 c_2(a_1+2c_4) &= 0, \quad & c_4(a_1+2c_4) &=0, \quad & c_2(-b_1+2c_2) &= 0, \\
c_4(-b_1+2c_2) &= 0, \quad & a_1c_3-b_1f_4+k_1 &=0, \quad & c_2a_1+c_4b_1 &= 0. 
\end{alignedat}  
\]
\end{enumerate}
At this point we notice that, computing the tensor $N_\varphi$ one obtains $N_\varphi(e_i,e_j)=0$ for every $i,j=1,\ldots,5$. Therefore, the structure is normal, and being also $d\eta=k_1\Phi$, it is $\alpha$-Sasakian, with $k_1=2\alpha$. Notice that, a deformation of type \eqref{deformation} preserves the property for the structure to be $\alpha$-Sasakian, up to rescaling the coefficient $\alpha$. The solutions to the third system of equations are the following:
  \begin{align*}
 &\text{(A1)}~b_1=c_2=c_4=0,c_3\neq0,a_1=-\frac{k_1}{c_3}, \\
&\text{(A2)}~c_2=c_4=0,b_1\neq0,f_4=\frac{k_1+a_1c_3}{b_1}, \\
     &\text{(A3)}~b_1=c_2=0,a_1\neq0,c_3=-\frac{k_1}{a_1},c_4=-\frac{1}{2}a_1, \\
     &\text{(A4)}~b_1\neq0,c_2=\frac{1}{2}b_1,c_4=-\frac{1}{2}a_1,f_4=\frac{k_1+a_1c_3}{b_1}.
\end{align*}
For $(A1)$ and $(A2)$ we distinguish two cases according to the sign $k_1$. When $k_1 > 0$, 
\[
(A1), (A2) \simeq \mathfrak{aff}(\mathbb{R}) \times \mathfrak{sl}(2,\mathbb{R}),
\]
where the affine and semisimple components are given by
\[
\mathfrak{aff}(\mathbb{R}) =
\begin{cases}
\operatorname{Span}\{ f_4 e_1 + c_3 e_2,\; e_1 - c_3 e_5 \}, & \text{for (A1)}, \\[2mm]
\operatorname{Span}\{ a_1 e_1 + b_1 e_2 + k_1 e_5,\; e_1 - c_3 e_5 \}, & \text{for (A2)},
\end{cases}
\]
and
\[
\mathfrak{sl}(2,\mathbb{R}) = \operatorname{Span}\{ e_3,\, e_4,\, e_5 \}.
\]
When $k_1 < 0$, 
\[
(A1), (A2) \simeq \mathfrak{aff}(\mathbb{R}) \times \mathfrak{su}(2).
\]
where the affine parts remain the same, while the semisimple component changes to
\[
\mathfrak{su}(2) = \operatorname{Span}\{ e_3,\, e_4,\, e_5 \}.
\]
In the remaining cases, $(A3)$ and $(A4)$ take the form 
     $$\text{(A3),(A4)}\simeq\mathbb{R}^2\ltimes\mathfrak{h}_3.$$
  For $\text{(A3)}$, a convenient basis is
     \[
\left\{
E_1 = a_1 e_1 + k_1e_5,~
E_2 = \frac{1}{a_1} e_2,~
E_j = e_j,~ j = 3,4,5
\right\},
\]
with $\mathbb{R}^2 = \text{Span}\{E_2, E_5\}$, $\mathfrak{h}_3 = \text{Span}\{E_1, E_3, E_4\}$ and
\[
\text{ad}_{E_2} =
\begin{pmatrix}
1 & 0 & 0 \\
0 & \tfrac{1}{2} & \tfrac{f_4}{a_1} \\
0 & -\tfrac{f_4}{a_1} & \tfrac{1}{2}
\end{pmatrix}, \qquad
\text{ad}_{E_5} =
\begin{pmatrix}
0 & 0 & 0 \\
0 & 0 & -1 \\
0 & 1 & 0
\end{pmatrix}.
\]
For (A4), a similar basis can be chosen as 
\[
\left\{
F_1 = a_1 e_1 + b_1 e_2 + k_1e_5, ~
F_2 = \frac{1}{b_1} e_1, ~
F_j = e_j, ~ j = 3,4,5
\right\},
\]
with $\mathbb{R}^2 = \text{Span}\{F_2, F_5\}$, $\mathfrak{h}_3 = \text{Span}\{F_1, F_3, F_4\}$ and
\[
\text{ad}_{F_2} =
\begin{pmatrix}
-1 & 0 & 0 \\
0 & -\tfrac{1}{2} & -\tfrac{c_3}{b_1} \\
0 & \tfrac{c_3}{b_1} & -\tfrac{1}{2}
\end{pmatrix}, \qquad
\text{ad}_{F_5} =
\begin{pmatrix}
0 & 0 & 0 \\
0 & 0 & -1 \\
0 & 1 & 0
\end{pmatrix}.
\]

We now consider \textbf{Case B}. Using ($\ref{eqnk}$), we obtain
\begin{align}
 &de^1=a_1e^{12}+a_6e^{34},\nonumber \\
  &de^2=b_1e^{12}+b_6e^{34},\nonumber \\
 &de^3=c_2e^{13}+c_3e^{14}+c_4e^{23}+c_5e^{24}+e^{45}, \label{eq9}\\ &de^4=f_2e^{13}+f_3e^{14}+f_4e^{23}+f_5e^{24}-e^{35},\nonumber \\
   &de^5=k_1e^{12}+k_6e^{34}.\nonumber 
\end{align}    
Again in this case, $d\Phi=0$ gives the same equations as in \eqref{dphi=0},
and imposing $d^2e^k=0, ~k=1,...,5$, one gets three systems of equations.

\begin{enumerate}
\item First system:
   \begin{eqnarray*}
       f_2=-c_3,\quad f_3=c_2,\quad f_4=-c_5, \quad f_5=c_4, 
   \end{eqnarray*}
   which, together with \eqref{dphi=0}, give
   \begin{equation}\label{dphi=0B}
   a_6=-2c_4,\qquad b_6=2c_2.
   \end{equation}
\item Second system:
 \begin{eqnarray*}
       a_6=-2c_4\frac{k_6}{k_1}, \quad  f_3=b_6\frac{k_1}{2k_6}. \label{eqk}
   \end{eqnarray*}
   which together with \eqref{dphi=0B} give $k_1=k_6$. The structure equations (\ref{eq9}) of $\mathfrak{g}$ in this setting are
   \begin{align*}
 &de^1=a_1e^{12}-2c_4e^{34},\nonumber \\
  &de^2=b_1e^{12}+b_6e^{34},\nonumber \\
 &de^3=\frac{b_6}{2}e^{13}+c_3e^{14}+c_4e^{23}-f_4e^{24}+e^{45}, \label{eqnk'}\\ 
 &de^4=-c_3e^{13}+\frac{b_6}{2}e^{14}+f_4e^{23}+c_4e^{24}-e^{35},\nonumber \\
   &de^5=k_1e^{12}+k_1e^{34}.\nonumber 
\end{align*}
\item Third system:
\[    
   \begin{alignedat}{3} 
 c_4(a_1+2c_4) &= 0, \quad & b_6(b_6-b_1) &=0, \quad & c_4(b_1-b_6) &= 0, \\
b_6(a_1+2c_4) &= 0, \quad & 2b_1c_4+a_1b_6 &=0, \quad & c_3a_1-b_1f_4-k_1 &= 0. 
\end{alignedat}  
\] 
\end{enumerate}
Again, one can show that the structure is normal, and being $d\eta=k_1\Phi$, it is $\alpha$-Sasakian, with $k_1=2\alpha$.  
The solutions for the third system are the following:
  \begin{align*}
 &\text{(B1)}~b_1=b_6=c_4=0,c_3=\frac{k_1}{a_1}, \\
 &\text{(B2)}~b_6=c_4=0,f_4=\frac{a_1c_3-k_1}{b_1}, \\
     &\text{(B3)}~b_1=b_6=0,c_3=\frac{k_1}{a_1},c_4=-\frac{1}{2}a_1, \\
     &\text{(B4)}~b_6=b_1,c_4=-\frac{1}{2}a_1,f_4=\frac{a_1c_3-k_1}{b_1}.
\end{align*}
For the first two cases, $(B1)$ and $(B2)$ have the same general structure.  
The only difference between them arises from the sign of the constant $k_1$. When $k_1 > 0$, 
\[
(B1), (B2) \;\simeq\; \mathfrak{aff}(\mathbb{R}) \times \mathfrak{su}(2),
\]
where the affine and semisimple components are given by
\[
\mathfrak{aff}(\mathbb{R}) =
\begin{cases}
\operatorname{Span}\{a_1 e_1 + k_1e_5,\; e_2 - f_4 e_5 \}, & \text{for (B1)}, \\[2mm]
\operatorname{Span}\{ a_1 e_1 + b_1 e_2 + k_1 e_5,\; e_1 + c_3 e_5 \}, & \text{for (B2)},
\end{cases}
\]
and
\[
\mathfrak{su}(2) = \operatorname{Span}\{ e_3,\, e_4,\, e_5 \}.
\]
When $k_1 < 0$, the affine parts remain the same, while the semisimple component changes to
\[
\mathfrak{sl}(2,\mathbb{R}) = \operatorname{Span}\{ e_3,\, e_4,\, e_5 \},
\]
so that
\[
(B1), (B2) \;\simeq\; \mathfrak{aff}(\mathbb{R}) \times \mathfrak{sl}(2,\mathbb{R}).
\]
  In the remaining cases,  $(B3)$ and $(B4)$ take the form
     $${(B3),(B4)}\simeq\mathbb{R}^2\ltimes\mathfrak{h}_3$$
     For ${(B3)}$,  a convenient basis is
     \[
\left\{
G_1 = a_1 e_1 + k_1e_5,~
G_2 = \frac{1}{a_1} e_2,~
G_j = e_j,~ j = 3,4,5
\right\},
\]
with $\mathbb{R}^2 = \text{Span}\{G_2, G_5\}$, $\mathfrak{h}_3 = \text{Span}\{G_1, G_3, G_4\}$ and
\[
\text{ad}_{G_2} =
\begin{pmatrix}
1 & 0 & 0 \\
0 & \tfrac{1}{2} & \tfrac{f_4}{a_1} \\
0 & -\tfrac{f_4}{a_1} & \tfrac{1}{2}
\end{pmatrix}, \qquad
\text{ad}_{G_5} =
\begin{pmatrix}
0 & 0 & 0 \\
0 & 0 & 1 \\
0 & -1 & 0
\end{pmatrix}.
\]
For $(B4)$, a similar basis can be chosen as 
\[
\left\{
H_1 = a_1 e_1 + b_1 e_2 + k_1e_5, ~
H_2 = \frac{1}{b_1} e_1, ~
H_j = e_j, ~ j = 3,4,5
\right\},
\]
with $\mathbb{R}^2 = \text{Span}\{H_2, H_5\}$, $\mathfrak{h}_3 = \text{Span}\{H_1, H_3, H_4\}$ and
\[
\text{ad}_{H_2} =
\begin{pmatrix}
-1 & 0 & 0 \\
0 & -\tfrac{1}{2} & -\tfrac{c_3}{b_1} \\
0 & \tfrac{c_3}{b_1} & -\tfrac{1}{2}
\end{pmatrix}, \qquad
\text{ad}_{H_5} =
\begin{pmatrix}
0 & 0 & 0 \\
0 & 0 & 1 \\
0 & -1 & 0
\end{pmatrix}.
\]

In cases (A3), (A4), (B3), (B4), the corresponding Lie algebras are all isomorphic to a semidirect product $\mathfrak{g}_t=\mathbb{R}^2\ltimes_{\varphi_t}\mathfrak{h}_3$, where $\mathbb{R}^2=Span\{X,Y\}$, $\mathfrak{h}_3=Span\{v_1,v_2,v_3\}$ and    
 $$[v_1,v_3]=-v_1, \quad
\varphi_t(X)=
\begin{pmatrix}
1 & 0 & 0 \\
0 & \tfrac{1}{2} & t\\
0 & -t & -\frac{1}{2}
\end{pmatrix}, \quad
\varphi_t(Y)=
\begin{pmatrix}
0 & 0 & 0 \\
0 & 0 & -1 \\
0 & 1 & 0
\end{pmatrix}.
$$
By changing $X$ to $X + t Y$ we obtain an isomorphism between $\mathfrak{g}_t$ and $\mathfrak{g}_0$ with
structure equations 
\[
\begin{aligned}
  \left[e_1, e_3\right] &= e_3, &\quad [e_1, e_4] &= \tfrac{1}{2} e_4, &\quad [e_1, e_5] &= \tfrac{1}{2} e_5, \\
  [e_2, e_4] &= e_5, &\quad [e_2, e_5] &= -e_4, &\quad [e_4, e_5] &= -e_3.
\end{aligned}
\]
\end{proof}

We now consider the Ricci curvature of the above Lie algebras, clarifying whether they admit an $\eta$-Einstein structure, namely they satisfy  \eqref{eta-einstein}. In \cite[Section 4]{AF} it is showed that the Lie algebra $\mathfrak{su}(2) \times \mathfrak{aff}(\mathbb{R})$ does not admit any  $\eta$-Einstein Sasakian structure. Consequently, it cannot admit any $\eta$-Einstein transversely K\"ahler a.c.m. structure of maximal rank and with nontrivial center. Indeed, by Theorem \ref{theorem13}, such a structure would be necessary $\alpha$-Sasakian and an homothetic deformation would provide an $\eta$-Einstein Sasakian structure. On the other hand, the Lie algebras 
$\mathfrak{g}_0$ and $\mathfrak{sl}(2, \mathbb{R}) \times \mathfrak{aff}(\mathbb{R})$ 
 admit $\eta$-Einstein Sasakian structures which are never Einstein (\cite{AF}). Hence, we obtain the following:
 \begin{corollary}\label{corollary-eta-eistein}
     The only $5$-dimensional Lie algebras with trivial center admitting an $\eta$-Einstein transversely K\"ahler almost contact metric structure of maximal rank are  $\mathfrak{sl}(2, \mathbb{R}) \times \mathfrak{aff}(\mathbb{R})$ and $\mathfrak{g}_0\simeq \mathbb{R}^2 \ltimes\mathfrak{h}_3$, in which cases the structure is $\eta$-Einstein $\alpha$-Sasakian, non Einstein.
 \end{corollary}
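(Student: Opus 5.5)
The corollary follows by combining Theorem \ref{theorem13} with the known results on $\eta$-Einstein Sasakian structures from \cite{AF}. The step that needs care is the transfer between $\alpha$-Sasakian and Sasakian structures, because the deformation changes the Ricci tensor.

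First, take a $5$-dimensional Lie algebra $\mathfrak{g}$ with trivial center carrying an $\eta$-Einstein transversely K\"ahler almost contact metric structure of maximal rank. By Theorem \ref{theorem13}, $\mathfrak{g}$ is isomorphic to $\mathfrak{aff}(\mathbb{R})\times\mathfrak{sl}(2,\mathbb{R})$, $\mathfrak{aff}(\mathbb{R})\times\mathfrak{su}(2)$ or $\mathfrak{g}_0$. The same theorem says the structure is $\alpha$-Sasakian, with $d\eta=2\alpha\Phi$.

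Second, apply a homothetic deformation of type \eqref{deformation}:
\[
\varphi'=\varphi,\qquad \xi'=\alpha^{-1}\xi,\qquad \eta'=\alpha\eta,\qquad g'=\alpha^2 g.
\]
Here I take $\alpha>0$; if $\alpha<0$, first replace $(\varphi,\xi,\eta)$ by $(-\varphi,-\xi,-\eta)$, which changes the sign of $\alpha$. The deformation gives $d\eta'=2\Phi'$, so the new structure is Sasakian. I then need to check that the deformation preserves the $\eta$-Einstein condition:
\begin{itemize}
\item A constant rescaling of the metric leaves the Levi-Civita connection and the Ricci tensor unchanged, so $\mathrm{Ric}'=\mathrm{Ric}$.
\item Since $g=\alpha^{-2}g'$ and $\eta\otimes\eta=\alpha^{-2}\eta'\otimes\eta'$, the identity $\mathrm{Ric}=\lambda g+\mu\eta\otimes\eta$ becomes $\mathrm{Ric}'=\lambda\alpha^{-2}g'+\mu\alpha^{-2}\eta'\otimes\eta'$.
\end{itemize}
Hence $\mathfrak{g}$ carries an $\eta$-Einstein Sasakian structure. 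By \cite[Section 4]{AF}, $\mathfrak{su}(2)\times\mathfrak{aff}(\mathbb{R})$ has no such structure, which excludes it.

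Third, for existence on the two remaining algebras, \cite{AF} provides $\eta$-Einstein Sasakian structures on $\mathfrak{sl}(2,\mathbb{R})\times\mathfrak{aff}(\mathbb{R})$ and on $\mathfrak{g}_0$. A Sasakian structure is in particular quasi Sasakian, hence transversely K\"ahler, and $\eta$ is a contact form, so the structure has maximal rank.

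Finally, for the claim that these structures are never Einstein, suppose one of our $\alpha$-Sasakian structures were Einstein, i.e. $\mu=0$. After the deformation above it would become an Einstein Sasakian structure, since $\mu\alpha^{-2}=0$. By \cite{AF}, the $\eta$-Einstein Sasakian structures on these two algebras are never Einstein, which is a contradiction.
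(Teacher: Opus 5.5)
Your argument is correct and is essentially the paper's. Theorem~\ref{theorem13} forces the structure to be $\alpha$-Sasakian, and a homothetic deformation turns it into an $\eta$-Einstein Sasakian structure. The results of \cite{AF} then exclude $\mathfrak{su}(2)\times\mathfrak{aff}(\mathbb{R})$ and supply non-Einstein $\eta$-Einstein structures on the other two algebras. You also check explicitly that the deformation preserves the $\eta$-Einstein and Einstein conditions, which the paper leaves implicit.

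One small slip: replacing $(\varphi,\xi,\eta)$ by $(-\varphi,-\xi,-\eta)$ does not change the sign of $\alpha$, since both $d\eta$ and $\Phi$ change sign. The remark is unnecessary anyway, because your deformation gives $d\eta'=\alpha\,d\eta=2\alpha^2\Phi=2\Phi'$ for either sign of $\alpha$.
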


\subsection{The case of nontrivial center in dimension \ensuremath{5} and the \ensuremath{\eta}-Einstein condition}\label{section-dim5-center}

According to Theorem \ref{theorem6}, $5$-dimensional transversely K\"ahler almost contact metric Lie algebras $(\mathfrak{g},\varphi,\xi,\eta,g)$ of maximal rank and with nontrivial center, are in one-to-one correspondence  with $4$-dimensional K\"ahler Lie algebras $(\mathfrak{h},J,h,\omega)$  endowed with a symplectic $2$-form $\omega$. Further, by Proposition \ref{proposition-isom}, two transversely K\"ahler almost contact metric Lie algebras, of maximal rank and with nontrivial center, are isomorphic if and only if their K\"ahler quotients admit an isomorphism which is also a symplectomorphism with respect the two symplectic forms realizing the extensions. 

In \cite{Ovandop}, Ovando classified $4$-dimensional pseudo-Kähler Lie algebras. Those admitting a positive-definite inner product (and hence a Kähler structure) are listed in Table \ref{table1}. See also \cite{CF}, where the different sign in the expression of the K\"ahler form is justified by the fact that we use a different convention, namely $\Omega(X,Y)=h(X,JY)$. On the other hand, Ovando also classified the symplectic Lie algebras of dimension $4$ (\cite{Ovando}). Table \ref{table2} contains all possible symplectic $2$-forms for the Lie algebras admitting a K\"ahler structure.

Building on these results, we employ Proposition \ref{proposition10} to compute the Ricci curvature of the central extensions of the $4$-dimensional Kähler Lie algebras reproduced in Table~\ref{table1} via the symplectic structures in Table~\ref{table2}. We then obtain the classification of $\eta$-Einstein $5$-dimensional transversely K\"ahler almost contact metric Lie algebras with maximal rank and nontrivial center, as in the following Theorem.

\renewcommand{\arraystretch}{1.3} 
\renewcommand{\tablename}{\textbf{Table}} 
\begin{table}[h!]
\small
\centering
\begin{tabular}[H]{lll}
\hline
{Lie algebra} &  {Complex structure} &  {K\"ahler form ($\Omega$)} \\ \hline \noalign{\hrule height 0.45mm}
 $\mathfrak{r}\mathfrak{r}_{3,0}=(0,-e^{12},0,0)$ & $Je_1=e_2, Je_3=e_4$ & $-(ae^{12}+be^{34}), a,b>0$ \\
$\mathfrak{r}\mathfrak{r}'_{3,0}=(0,-e^{13},e^{12},0)$& $Je_1=e_4, Je_2=e_3$ & $-(ae^{14}+be^{23}), a,b>0 $\\
$\mathfrak{r}_2\mathfrak{r}_2=(0,-e^{12},0,-e^{34})$ &$Je_1=e_2, Je_3=e_4$ & $-(ae^{12}+be^{34}), a,b>0$\\
$\mathfrak{r}'_{4,0,\delta}=(e^{14},\delta e^{34},-\delta e^{24},0)$ &$J_1e_4=e_1, J_1e_2=e_3$ & $-(ae^{14}+be^{23}), a<0,  b>0 $\\
&$J_2e_4=e_1, J_2e_3=e_2$ & $-(ae^{14}+be^{23}), a,b<0$\\
 $\mathfrak{d}_{4,2}=(2e^{14},-e^{24},-e^{12}+e^{34},0)$& $Je_1=e_4, Je_2=e_3$ & $-(ae^{14}+be^{23}), a,b>0 $\\
$\mathfrak{d}_{4,1/2}=(\frac{1}{2}e^{14},\frac{1}{2}e^{24},-e^{12}+e^{34},0)$& $Je_1=e_2, Je_4=e_3$ & $-a(e^{12}-e^{34}), a>0 $\\
 $\mathfrak{d}'_{4,\delta}=(\frac{\delta}{2}e^{14}+e^{24},-e^{14}+\frac{\delta}{2}e^{24},$\\~~~~~~~~~~$-e^{12}+\delta e^{34},0)$ &  $J_1e_1=e_2, J_1e_4=e_3$ &  $-a(e^{12}-\delta e^{34}), a>0 $\\
&  $J_3e_2=e_1, J_3e_3=e_4$ &  $-a(e^{12}-\delta e^{34}), a<0 $\\
\hline
\end{tabular}
\caption{ Four-dimensional non-abelian K\"ahler Lie algebras}
\label{table1}
\end{table}

\vspace{0.3cm}

\renewcommand{\arraystretch}{1.15}
\renewcommand{\tablename}{\textbf{Table}} 
\begin{table}[h!]
\centering
\begin{tabular}{ll}
\hline
{Lie algebra} & 
 {Symplectic forms ($\omega$)} \\ \hline \noalign{\hrule height 0.45mm}

{$\mathfrak{r}\mathfrak{r}_{3,0}$} &  

{$a_{12}e^{12}+a_{13}e^{13}+a_{14}e^{14}+a_{34}e^{34}, a_{12}a_{34}\neq0$} \\

{$\mathfrak{r}\mathfrak{r}'_{3,0}$} &  {$a_{12}e^{12}+a_{13}e^{13}+a_{14}e^{14}+a_{23}e^{23}, a_{14}a_{23}\neq0$} \\

{$\mathfrak{r}_2\mathfrak{r}_{2}$ }&  {$a_{12}e^{12}+a_{13}e^{13}+a_{34}e^{34}, a_{12}a_{34}\neq0$ }\\
{ $ \mathfrak{r}'_{4,0,\delta}$}&  {$a_{14}e^{14}+a_{23}e^{23}+a_{24}e^{24}+a_{34}e^{34}, a_{14}a_{23}\neq0, \delta \neq0$} \\
{ $ \mathfrak{d}_{4,2}$}&  {$a_{12-34}(e^{12}-e^{34})+a_{14}e^{14}+a_{23}e^{23}+a_{24}e^{24}, -a^2_{12-34}+a_{14}a_{23}\neq0, $} \\
{ $ \mathfrak{d}_{4,1/2}$}&  {$a_{12-34}(e^{12}-e^{34})+a_{14}e^{14}+a_{24}e^{24}, a_{12-34}\neq0$} \\
{ $ \mathfrak{d}'_{4,\delta}$}&  {$a_{12-\delta 34}(e^{12}-\delta e^{34})+a_{14}e^{14}+a_{24}e^{24}, a_{-12+\delta34}\neq0, \delta \neq 0 $ }\\
\hline
\end{tabular}
\caption{Symplectic forms on $4$-dimensional non-abelian K\"ahler Lie algebras}
\label{table2}
\end{table}

\begin{theorem} \label{theorem14}
     Let $(\mathfrak{g},\varphi,\xi,\eta,g)$ be a $5$-dimensional transversely K\"ahler a.c.m. Lie algebra of maximal rank with nontrivial center. 
     Then $(\mathfrak{g},\varphi,\xi,\eta,g)$ is {$\eta$-Einstein}  if and only if it is isomorphic to one of the Lie algebras listed in Table \ref{table-eta-einstein}. 
  In particular,
  \begin{enumerate}
  \item $\mathfrak{d}_{4,2}$ is the only $4$-dimensional K\"ahler Lie algebra which does not admit any $\eta$-Einstein extension via a symplectic form.
  \item In all the cases $\mathfrak{g}'_2,\dots,\mathfrak{g}'_7$, where the K\"ahler quotient is not abelian,  the almost contact metric structure is quasi Sasakian. 
  \item All, and only all, $4$-dimensional K\"ahler-Einstein Lie algebras admit  $\alpha$-Sasakian $\eta$-Einstein extensions.
  \item $\mathfrak{rr}'_{3,0}$ is the only $4$-dimensional non abelian K\"ahler-Einstein Lie algebra admitting $\eta$-Einstein extensions which are quasi Sasakian non $\alpha$-Sasakian.
  \item The Heisenberg Lie algebra $\mathfrak{h}_5$ is the only $5$-dimensional Lie algebra admitting $\eta$-Einstein structures which are not quasi-Sasakian (including anti-quasi-Sasakian structures).
  \item In any case the $\eta$-Einstein almost contact metric Lie algebra in not Einstein.
  \end{enumerate}
 \end{theorem}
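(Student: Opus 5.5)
By Theorem~\ref{theorem6} and Proposition~\ref{proposition-isom}, the structure is determined up to isomorphism by a $4$-dimensional K\"ahler Lie algebra $(\mathfrak{h},J,h)$ together with a symplectic form $\omega$ on $\mathfrak{h}$, taken modulo K\"ahler isomorphisms that are also symplectomorphisms. The plan is therefore a case-by-case analysis. The K\"ahler algebra $\mathfrak{h}$ runs over the abelian $\mathbb{R}^4$ and the entries of Table~\ref{table1}, with the K\"ahler metric $h$ determined by $J$ and the parameters $a,b$ through $h(X,Y)=\Omega(X,JY)$. The form $\omega$ runs over the general symplectic form of Table~\ref{table2}.

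For each pair $(\mathfrak{h},\omega)$ we compute the Ricci tensor of the extension using Proposition~\ref{proposition10}. This is done in three steps:
\begin{itemize}
\item[(i)] Choose an $h$-orthonormal basis $X_i$ adapted to $J$; for instance, rescale $e_1,\dots,e_4$ by $\sqrt{a}$, $\sqrt{b}$.
\item[(ii)] Compute $\nabla^{\mathfrak{h}}$ from the Koszul formula and obtain $\rho^{\mathfrak{h}}$.
\item[(iii)] Add the quadratic correction $\tfrac12\sum_i\omega(X_i,Y)\omega(Z,X_i)$, which equals $-\tfrac12 h(\hat\omega Y,\hat\omega Z)$ once $\omega$ is written as $h(\cdot,\hat\omega\,\cdot)$. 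Then compute the mixed term $\rho(Y,\xi)$ from \eqref{cr.2} and the constant $\rho(\xi,\xi)=\tfrac14|\omega|^2$.
\end{itemize}
The $\eta$-Einstein condition \eqref{eta-einstein} then becomes two requirements:
\[
\rho(Y,\xi)=0,\qquad \rho^{\mathfrak{h}}-\tfrac12\,h(\hat\omega\cdot,\hat\omega\cdot)=\lambda h \ \text{ on } \mathfrak{h},
\]
with $\mu=\rho(\xi,\xi)-\lambda$ arbitrary. The first is a linear system in the coefficients $a_{ij}$ of $\omega$. In practice it should kill the coefficients of $\omega$ on terms like $e^{13}$ or $e^{24}$ that couple the ideal structure nontrivially. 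The second is a quadratic system in the $a_{ij}$ with coefficients depending on $a,b,\delta$.

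Solving these systems yields the list in Table~\ref{table-eta-einstein}. Each surviving $\omega$ is then normalized using automorphisms of $\mathfrak{h}$ preserving $(J,h)$ and, where needed, the homothety \eqref{deformation}.

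The itemized consequences are read off from the solutions.
\begin{itemize}
\item[(1)] For $\mathfrak{d}_{4,2}$, show that the quadratic system is inconsistent. The Ricci tensor of $\mathfrak{d}_{4,2}$ has distinct eigenvalues that no admissible $\hat\omega^2$ can compensate.
\item[(2)] For every nonabelian $\mathfrak{h}$, the surviving $\omega$ turn out to be $J$-invariant, hence quasi Sasakian by Remark~\ref{remark aqS qS}.
\item[(3)] If $\omega=2\alpha\Omega$, then $\hat\omega^2$ is a multiple of the identity. Hence $\eta$-Einstein is equivalent to $\rho^{\mathfrak{h}}=\lambda' h$, i.e. to $\mathfrak{h}$ being K\"ahler--Einstein; one also checks that $\rho(Y,\xi)=0$ automatically in this case.
\item[(4)] Only for $\mathfrak{rr}'_{3,0}$, which is Ricci-flat, does a non-proportional $J$-invariant $\omega$ with $\hat\omega^2$ scalar survive, for example $\omega=c_1e^{14}+c_2e^{23}$ with suitable relation.
\item[(5)] For $\mathfrak{h}=\mathbb{R}^4$ we have $\rho^{\mathfrak{h}}=0$ and $\rho(Y,\xi)=0$. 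So the condition is just that $\hat\omega^2$ is scalar, which allows non-$J$-invariant $\omega$, including the $J$-anti-invariant ones; all such extensions give $\mathfrak{h}_5$.
\item[(6)] The Einstein condition would require $\lambda=\rho(\xi,\xi)=\tfrac14|\omega|^2>0$. This will be checked to contradict the computed $\lambda$ in each row. In particular, the trace identity $\lambda\cdot 4=\mathrm{scal}^{\mathfrak{h}}-\tfrac12|\omega|^2$ combined with $\mathrm{scal}^{\mathfrak{h}}\le0$ for nonunimodular solvable (and zero for flat) algebras gives $\lambda<0<\rho(\xi,\xi)$.
\end{itemize}

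The main obstacle is the bulk computation for the families with parameters, namely $\mathfrak{r}'_{4,0,\delta}$ with two complex structures and $\mathfrak{d}'_{4,\delta}$. Here one must solve the quadratic system in full generality and then decide equivalence of solutions under automorphisms. I would first impose $\rho(\cdot,\xi)=0$ to reduce to forms of type $(1,1)$ plus possibly $e^{24}$-type terms, then diagonalize, and only afterward address isomorphism classes via Proposition~\ref{proposition-isom}.
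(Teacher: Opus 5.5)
Your plan follows the paper's proof: reduce via Theorem~\ref{theorem6} and Proposition~\ref{proposition-isom} to pairs $(\mathfrak h,\omega)$, compute with Proposition~\ref{proposition10}, and read off the items. The gap is item (4), which cannot be established because the claim is false. Take $\mathfrak h=\mathfrak r_2\mathfrak r_2$ with $a=b$, which is non-abelian and K\"ahler--Einstein ($\mathrm{Ric}^{\mathfrak h}=-\frac1a h$), and $\omega=\lambda a(e^{12}-e^{34})$ with $\lambda\neq0$.
\begin{itemize}
\item This form is closed, nondegenerate and $J$-invariant, and it is not proportional to $\Omega=-a(e^{12}+e^{34})$.
\item One has $\hat\omega^2=-\lambda^2 I$.
\item $\rho(\cdot,\xi)=0$, since on $\mathfrak r_2\mathfrak r_2$ formula \eqref{cr.2} only involves the coefficient $a_{13}$.
\end{itemize}
Hence $\mathrm{Ric}=-(\tfrac1a+\tfrac{\lambda^2}{2})g+(\tfrac1a+\tfrac{3\lambda^2}{2})\eta\otimes\eta$, and the extension is $\eta$-Einstein, quasi Sasakian and not $\alpha$-Sasakian. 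This is exactly the row $\mathfrak g'_4$ of Table~\ref{table-eta-einstein} with $a=b$ and the table's parameter $\mu=-\lambda$; the table itself says ``$\alpha$-Sasakian iff $\lambda=\mu$ and $a=b$''. So the paper's ``read off from the table'' argument does not support item 4 either, and the item must be corrected to include $\mathfrak r_2\mathfrak r_2$.

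Your stated reason (Ricci-flatness of $\mathfrak{rr}'_{3,0}$) is not the relevant mechanism. For a K\"ahler--Einstein quotient, your identity $\mathrm{Ric}|_{\mathfrak h}=\mathrm{Ric}^{\mathfrak h}+\tfrac12\hat\omega^2$ forces $\hat\omega^2$ to be scalar, i.e.\ $\omega$ self-dual or anti-self-dual. Then $\rho(Y,\xi)=\tfrac12(d^{*}\omega)(Y)$ vanishes automatically, because $d^{*}=-{*}d{*}$ on $2$-forms in dimension $4$. So quasi Sasakian non-$\alpha$-Sasakian $\eta$-Einstein extensions correspond exactly to closed anti-self-dual (primitive $(1,1)$) forms. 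These exist on $\mathfrak{rr}'_{3,0}$ ($ae^{14}-be^{23}$) and on $\mathfrak r_2\mathfrak r_2$ at $a=b$ ($e^{12}-e^{34}$), but not on $\mathfrak d_{4,1/2}$ or $\mathfrak d'_{4,\delta}$.

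Apart from this, your structural arguments are sound and cleaner than the paper's table-reading:
\begin{itemize}
\item Item 1 follows because $\hat\omega^2$ has eigenvalues of even multiplicity, while $\mathrm{Ric}^{\mathfrak h}$ of $\mathfrak d_{4,2}$ has the four distinct eigenvalues $\frac1a(-\frac92,\frac32,-\frac32,-6)$. State the even-multiplicity fact explicitly; it is the whole argument.
\item Item 3 follows from $\hat\omega=2\alpha J$ and $\nabla^{\mathfrak h}\Omega=0$.
\item Item 6 follows from the trace identity together with $\mathrm{scal}^{\mathfrak h}\le 0$ for these solvable algebras.
\end{itemize}
Two smaller points. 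Do not normalize with the homothety \eqref{deformation}: it is not an isomorphism of almost contact metric Lie algebras, and the table already carries the scale $\lambda$. Also, item 5, read literally for all $5$-dimensional Lie algebras, needs Theorem~\ref{theorem13} for the trivial-center case.
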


\begin{proof}
For each K\"ahler Lie algebra listed in Table \ref{table1}, we consider central extensions via the symplectic forms in Table \ref{table2}. We then apply Proposition \ref{proposition10} to compute the Ricci curvature. 

We provide the details of the computations for  the central extension of the K\"ahler Lie algebra $\mathfrak{h}=(\mathfrak{r}\mathfrak{r}'_{3,0},J,\Omega)$. The Lie brackets of this Lie algebra are the following
$$[e_1,e_2]_{\mathfrak{h}}=-e_3,\quad [e_1,e_3]_{\mathfrak{h}}=e_2,\quad [e_i,e_j]_{\mathfrak{h}}=0\ \mbox{otherwise}. $$
Consider the positive definite inner product given by $h(X,Y)=\Omega(X,JY)$, which is  determined, with respect to $\lbrace e_1, ...,e_4 \rbrace$, by 
$$h_{11}=h_{44}=a, ~~~~~~h_{22}=h_{33}=b, ~~~~~h_{ij}=0 ~~~\text{if}~i \neq j.$$
An orthonormal basis of $(\mathfrak{r}\mathfrak{r}'_{3,0},h)$ is
$$\bar{e}_1=\frac{e_1}{\sqrt{a}}, ~~~~\bar{e}_2=\frac{e_2}{\sqrt{b}}, ~~~~\bar{e}_3=\frac{e_3}{\sqrt{b}}, ~~~~\bar{e}_4=\frac{e_4}{\sqrt{a}},$$
with $a,b>0$. Using the Kozsul formula
\[
2h(\nabla_{\bar{e}_i}{\bar{e}_j},{\bar{e}_k})=h([{\bar{e}_i},{\bar{e}_j}],{\bar{e}_k})-h([{\bar{e}_j},{\bar{e}_k}],{\bar{e}_i})+h([{\bar{e}_k},{\bar{e}_i}],{\bar{e}_j}),
\]
the Levi-Civita connection is described as follows:
 \begin{eqnarray*}
\nabla^\mathfrak{h}_{\bar{e}_1}\bar{e}_3=\frac{\bar{e}_2}{\sqrt{a}}, ~~~~~ \nabla^\mathfrak{h}_{\bar{e}_1}\bar{e}_2=-\frac{\bar{e}_3}{\sqrt{a}},~~~~~ \nabla^\mathfrak{h}_{\bar{e}_i}\bar{e}_j=0~~\mbox{otherwise}.
 \end{eqnarray*} 
Subsequently, for  the curvature of $(\mathfrak{r}\mathfrak{r}'_{3,0}, h)$  one obtains \[ R^\mathfrak{h}(\bar{e}_i,\bar{e}_j)\bar{e}_k=0,\]
so that $(\mathfrak{r}\mathfrak{r}'_{3,0}, h)$ is flat and the Ricci curvature curvature $\rho^{\mathfrak{h}}$ identically vanishes.

Now, as in Table \ref{table2}, we  consider the symplectic form on the Lie algebra $\mathfrak{rr}_{3,0}'$ given by $$\omega=a_{12}e^{12}+a_{13}e^{13}+a_{14}e^{14}+a_{23}e^{23},\ a_{14}a_{23}\neq 0,$$  for which we have
\begin{eqnarray}\nonumber
\omega(\bar{e}_1,\bar{e}_2)=\frac{a_{12}}{\sqrt{ab}}, ~~~~\omega(\bar{e}_1,\bar{e}_3)=\frac{a_{13}}{\sqrt{ab}},~~~~ \omega(\bar{e}_1,\bar{e}_4)=\frac{a_{14}}{a},\\ \label{eqw}
~~~ \omega(\bar{e}_2,\bar{e}_3)=\frac{a_{23}}{b},~~~\omega(\bar{e}_i,\bar{e}_j)=0, ~\mbox{for} ~i,j={1,..,4}.    
\end{eqnarray} 

Let $(\mathfrak{g},\varphi,\xi,\eta,g)$ be the almost contact metric Lie algebra obtained as  the central extension of the Lie algebra $(\mathfrak{h},J,h)$ via the simplectic form $\omega$. We compute the Ricci tensor of $\mathfrak{g}$. Firstly, using (\ref{cr.1}) and (\ref{eqw}), one can write 
\begin{align*}
    \rho(\bar{e}_1,\bar{e}_1) &= -\frac{1}{2}\Big(\frac{a_{12}^2}{ab}+\frac{a_{13}^2}{ab}+\frac{a_{14}^2}{a^2}\Big), &\qquad \qquad \rho(\bar{e}_1,\bar{e}_2)     
    &=-\frac{1}{2}\Big(\frac{a_{13}}{\sqrt{ab}} \cdot\frac{a_{23}}{b}\Big), \\
    \rho(\bar{e}_2,\bar{e}_2) &=-\frac{1}{2}\Big(\frac{a_{12}^2}{ab}+\frac{a_{23}^2}{b^2}\Big), &\qquad \qquad \rho(\bar{e}_1,\bar{e}_3) &=\frac{1}{2}\Big(\frac{a_{12}}{\sqrt{ab}} \cdot\frac{a_{23}}{b}\Big), \\
    \rho(\bar{e}_3,\bar{e}_3) &=-\frac{1}{2}\Big(\frac{a_{13}^2}{ab}+\frac{a_{23}^2}{b^2}\Big), &\qquad \qquad  \rho(\bar{e}_1,\bar{e}_4) &= 0, \\
    \rho(\bar{e}_4,\bar{e}_4) &= -\frac{1}{2}\Big(\frac{a_{14}^2}{a^2}+\frac{a_{13}^2}{ab}\Big), &\qquad \qquad  \rho(\bar{e}_2,\bar{e}_3) &= -\frac{1}{2}\Big(\frac{a_{12}}{\sqrt{ab}} \cdot\frac{a_{13}}{\sqrt{ab}}\Big), \\
    \rho(\bar{e}_3,\bar{e}_4) &= -\frac{1}{2}\Big(\frac{a_{13}}{\sqrt{ab}} \cdot\frac{a_{14}}{a}\Big), & \qquad \qquad \rho(\bar{e}_2,\bar{e}_4) &= -\frac{1}{2}\Big(\frac{a_{12}}{\sqrt{ab}} \cdot\frac{a_{14}}{a}\Big). 
\end{align*}
Using (\ref{cr.2}), we get
\begin{eqnarray*}
\rho(\xi,\bar{e}_1)=\rho(\xi,\bar{e}_4)=0, \qquad
\rho(\xi,\bar{e}_2)
=- \frac{a_{13}}{2a\sqrt{b}}, \qquad
\rho(\xi,\bar{e}_3)
    = \frac{a_{12}}{2a\sqrt{b}}, 
\end{eqnarray*}
and by (\ref{cr.3}), we obtain
\begin{eqnarray*}
    \rho(\xi,\xi)
    &=&\frac{1}{2}  \Big(\frac{a_{12}^2+a_{13}^2}{ab}+\frac{a_{14}^2}{a^2} +\frac{a_{23}^2}{b^2} \Big).
\end{eqnarray*}

The structure is $\eta$-Einstein if and only if $\rho(\bar e_i,\bar e_i)=\rho(\bar e_j,\bar e_j)$ and $\rho(\bar e_i,\bar e_j)=\rho(\xi,\bar e_j)=0$ for $i\ne j$, which are equivalent to  \[a_{12}=a_{13}=0,\qquad \frac{a_{14}^2}{a^2}=\frac{a_{23}^2}{b^2}.\] 
If we set $\lambda=\frac{a_{14}}{a}=\pm\frac{a_{23}}{b}\ne0$, we have the following possibilities:
\[\omega=\lambda(ae^{14}+be^{23})=\lambda\Omega,\]
\[\omega=\lambda(ae^{14}-be^{23}).\]
Therefore, for the almost contact metric structure $(\varphi,\xi,\eta,g)$, in the first case one has $d\eta=\lambda\Phi$, so that the structure is $\alpha$-Sasakian with $2\alpha=\lambda$. In the second case we can observe that the structure is quasi-Sasakian. Indeed, one can easily see that the symplectic form $\omega=\lambda(ae^{14}-be^{23})$ satisfies $\omega(JX,JY)=\omega(X,Y)$ for every $X,Y\in\mathfrak{h}$, and thus the structure is quasi Sasakian (see Remark \ref{remark aqS qS}). In both cases the expression for the Ricci curvature is the following:
\[
\mathrm{Ric}
= -\frac{a_{14}^{2}}{2a^{2}}\, g
+ \frac{3a_{14}^{2}}{2a^{2}}\, \eta \otimes \eta= -\frac{\lambda^{2}}{2} g
+ \frac{3\lambda^{2}}{2}\, \eta \otimes \eta,
\]
in particular, the Lie algebra is not Einstein.

The Ricci curvatures for the extensions of the remaining non abelian K\"ahler Lie algebras can be computed in an analogous manner. We only summarize in Table \ref{table4} the Ricci curvatures of all non abelian K\"ahler Lie algebras. In general, as consequence of Proposition \ref{proposition-isom}, the isomorphism classes of the almost contact metric Lie algebras depend on the parameters $a,b,\lambda,\mu,\delta$, where in particular $a,b,\delta$ determine the isomorphism classes of the K\"ahler quotients.

As regards the central extensions of $\mathbb{R}^4$, one can argue similarly. Consider the K\"ahler structure $(J,\Omega)$ defined by
\[Je_1=e_2,\quad Je_3=e_4,\quad \Omega=-(a e^{12}+be^{34}),\ a,b>0,\]
and the symplectic form
\[\omega=a_{12}e^{12}+a_{13}e^{13}+a_{14}e^{14}+a_{23}e^{23}+a_{24}e^{24}+a_{34}e^{34}, \quad a_{12}a_{34}-a_{13}a_{24}+a_{14}a_{23}\ne 0.\]
The $5$-dimensional Lie algebra $(\mathfrak{g},\varphi,\xi,\eta,g)$ obtained as central extension of $\mathbb{R}^4$ via $\omega$ is isomorphic to the Heisenberg Lie algebra $\mathfrak{h}_5$. Using Proposition \ref{proposition10}, one can show that $\mathfrak{g}$ is $\eta$-Einstein if and only if the symplectic form is
\[\omega=\lambda(ae^{12}\pm be^{34})+\beta(e^{13}\mp e^{24})+\gamma(e^{14}\pm e^{23}), \quad (\lambda,\beta,\gamma)\ne (0,0,0)\]
in which case the Ricci curvature tensor is given by $\mathrm{Ric}
=\left(\lambda^2+\frac{\beta^2+\gamma^2}{ab}\right)\left(-\frac{1}{2}g
+\frac{3}{2}\eta\otimes\eta\right)$. We show that it is not restrictive taking $\gamma=0$. Indeed, assuming
\[\omega=\lambda(ae^{12}+ be^{34})+\beta(e^{13}-e^{24})+\gamma(e^{14}+ e^{23}),\]
one can consider the basis
\begin{align*}
f_1&=\cos{\theta}\ e_1+\sin{\theta}\ e_2 &\quad f_3&=\cos{\theta}\ e_3+\sin{\theta}\ e_4\\
f_2&=-\sin{\theta}\ e_1+\cos{\theta}\ e_2 &\quad f_4&=-\sin{\theta}\ e_3+\cos{\theta}\ e_4
\end{align*}
with respect to which $f^{12}=e^{12}$, $f^{34}=e^{34}$, and 
\begin{align*}
f^{13}-f^{24}&=\cos{2\theta}\ (e^{13}-e^{24})+\sin{2\theta}\ (e^{14}+e^{23}),\\
f^{23}+f^{14}&=-\sin{2\theta}\ (e^{13}-e^{24})+\cos{2\theta}\ (e^{14}+e^{23}).
\end{align*}
Now, if $\beta=0$, we can take $\cos{2\theta}=0$ and $\sin{2\theta}=1$, so that $\omega=\lambda(af^{12}+ bf^{34})+\gamma(f^{13}- f^{24})$. If $\beta\ne0$, we can take $\tan{2\theta}=\frac{\gamma}{\beta}$ and straightforward computations give $\omega=\lambda(af^{12}+ bf^{34})\pm\sqrt{\beta^2+\gamma^2}(f^{13}- f^{24})$. 
In the case where
\[\omega=\lambda(ae^{12}-be^{34})+\beta(e^{13}+e^{24})+\gamma(e^{14}- e^{23}),\]
one can consider the basis
\begin{align*}
f_1&=\cos{\theta}\ e_1+\sin{\theta}\ e_2 &\quad f_3&=\cos{\theta}\ e_3-\sin{\theta}\ e_4\\
f_2&=-\sin{\theta}\ e_1+\cos{\theta}\ e_2 &\quad f_4&=\sin{\theta}\ e_3+\cos{\theta}\ e_4
\end{align*}
and argue analogously as in the previous case.

Finally, all claims 1. to 6. are consequences of the information collected in Table \ref{table-eta-einstein}.
\renewcommand{\arraystretch}{1.15} 
\renewcommand{\tablename}{\textbf{Table}} 
\begin{table}[h!]
\centering
\begin{tabular}{lll}
\hline
{Lie algebra} & 
 {orthonormal basis} & $\operatorname{Ric}$ \\ \hline \noalign{\hrule height 0.45mm}

{$\mathfrak{r}\mathfrak{r}_{3,0}$} &  
{$\frac{e_1}{\sqrt{a}},\frac{e_2}{\sqrt{a}},\frac{e_3}{\sqrt{b}},\frac{e_4}{\sqrt{b}}$} &$\operatorname{Ric}=\operatorname{diag}\left(-\frac1a,-\frac1a,0,0\right)$\\

{$\mathfrak{r}\mathfrak{r}'_{3,0}$} &  
{$\frac{e_1}{\sqrt{a}},\frac{e_2}{\sqrt{b}},\frac{e_3}{\sqrt{b}},\frac{e_4}{\sqrt{a}}$} &$\operatorname{Ric}=0$\\

{$\mathfrak{r}_2\mathfrak{r}_{2}$ }&   
{$\frac{e_1}{\sqrt{a}},\frac{e_2}{\sqrt{a}},\frac{e_3}{\sqrt{b}},\frac{e_4}{\sqrt{b}}$} &$\operatorname{Ric}=\operatorname{diag}\left(-\frac1a,-\frac1a,-\frac1b,-\frac1b\right)$\\

{$\mathfrak{r}'_{4,0,\delta}$ with $J_1$} &  
{$\frac{e_1}{\sqrt{-a}},\frac{e_2}{\sqrt{b}},\frac{e_3}{\sqrt{b}},\frac{e_4}{\sqrt{-a}}$} &$\operatorname{Ric}=\operatorname{diag}\left(\frac1a,0,0,\frac1a\right)$\\

{$\mathfrak{r}'_{4,0,\delta}$ with $J_2$} &  
{$\frac{e_1}{\sqrt{-a}},\frac{e_2}{\sqrt{-b}},\frac{e_3}{\sqrt{-b}},\frac{e_4}{\sqrt{-a}}$} &$\operatorname{Ric}=\operatorname{diag}\left(\frac1a,0,0,\frac1a\right)$\\

{ $ \mathfrak{d}_{4,2}$}&  
{$\frac{e_1}{\sqrt{a}},\frac{e_2}{\sqrt{b}},\frac{e_3}{\sqrt{b}},\frac{e_4}{\sqrt{a}}$} &$\operatorname{Ric}=\operatorname{diag}\left(-\frac{9}{2a},\frac{3}{2a},-\frac{3}{2a},-\frac{6}{a}\right)$\\

{ $\mathfrak{d}_{4,1/2}$}&   
{$\frac{e_1}{\sqrt{a}},\frac{e_2}{\sqrt{a}},\frac{e_3}{\sqrt{a}},\frac{e_4}{\sqrt{a}}$} &$\operatorname{Ric}=-\frac{3}{2a}\, h$\\

{$\mathfrak{\delta}'_{4,\delta}$ with $J_1$} &  
{$\frac{e_1}{\sqrt{a}},\frac{e_2}{\sqrt{a}},\frac{e_3}{\sqrt{a\delta}},\frac{e_4}{\sqrt{a\delta}}$} &$\operatorname{Ric}=-\frac{3\delta}{2a}\, h$\\

{$\mathfrak{\delta}'_{4,\delta}$ with $J_3$} &  
{$\frac{e_1}{\sqrt{-a}},\frac{e_2}{\sqrt{-a}},\frac{e_3}{\sqrt{-a\delta}},\frac{e_4}{\sqrt{-a\delta}}$} &$\operatorname{Ric}=\frac{3\delta}{2a}\,h$\\
\hline
\end{tabular}
\caption{Ricci curvature of non-abelian K\"ahler Lie algebras}
\label{table4}
\end{table}

\normalsize
\end{proof}

We can also discuss unimodularity, as follows:
\begin{proposition} 
    Let $(\mathfrak{g},\varphi,\xi,\eta,g)$ be a $5$-dimensional $\eta$-Einstein  transversely K\"ahler almost contact metric Lie algebra, with maximal rank and nontrivial center. If $\mathfrak{g}$ is unimodular, then $\mathfrak{g}$ is null $\eta$-Einstein, and thus isomorphic either to $\mathfrak{g}'_1\simeq \mathfrak{h}_5$ or to $\mathfrak{g}'_3 \simeq \mathbb{R}\ltimes(\mathfrak{h}_3\times\mathbb{R})$.
\end{proposition}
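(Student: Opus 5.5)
By Theorem \ref{theorem14}, $\mathfrak{g}$ is isomorphic to one of the $\eta$-Einstein central extensions collected in Table \ref{table-eta-einstein}. My plan is to decide unimodularity directly on the quotient and then look at the Ricci constants.

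\emph{Reduction to the quotient.} Since $\xi$ is central and $[X,Y]=[X,Y]_\mathfrak{h}-\omega(X,Y)\xi$, the operator $\ad_X$ for $X\in\mathfrak{h}$ has block-triangular form with respect to $\mathfrak{g}=\mathfrak{h}\oplus\mathbb{R}\xi$. Its diagonal blocks are $\ad^\mathfrak{h}_X$ and $0$, and $\ad_\xi=0$. Hence
\[
\operatorname{tr}\ad^{\mathfrak{g}}_X=\operatorname{tr}\ad^{\mathfrak{h}}_X ,
\]
so $\mathfrak{g}$ is unimodular if and only if the K\"ahler quotient $\mathfrak{h}=\mathfrak{g}/\mathfrak{z(g)}$ is unimodular.

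\emph{Which quotients survive.} I would go through Table \ref{table1} using the structure equations; the trace of $\ad_{e_i}$ is read off from the coefficients of $e^{i}\wedge e^{j}$ in $de^j$.
\begin{itemize}
\item For $\mathfrak{rr}_{3,0}$, $\operatorname{tr}\ad_{e_1}=1$, so it is not unimodular.
\item For $\mathfrak{r}_2\mathfrak{r}_2$, the traces of $\ad_{e_1}$ and $\ad_{e_3}$ are nonzero.
\item For $\mathfrak{r}'_{4,0,\delta}$, $\operatorname{tr}\ad_{e_4}=-1\neq0$.
\item For $\mathfrak{d}_{4,2}$, $\mathfrak{d}_{4,1/2}$ and $\mathfrak{d}'_{4,\delta}$ with $\delta>0$, the trace of $\ad_{e_4}$ is nonzero (for instance $-2$, $-2$, $-2\delta$ up to sign). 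In any case $\mathfrak{d}_{4,2}$ admits no $\eta$-Einstein extension.
\end{itemize}
The only unimodular possibilities left are the abelian $\mathbb{R}^4$ and $\mathfrak{rr}'_{3,0}$. For the latter, $\ad_{e_1}$ acts as a rotation on $\langle e_2,e_3\rangle$ and every other $\ad$ is nilpotent. The resulting extensions are $\mathfrak{g}'_1\simeq\mathfrak{h}_5$ and $\mathfrak{g}'_3\simeq\mathbb{R}\ltimes(\mathfrak{h}_3\times\mathbb{R})$; I would identify the latter with Sasakian $\mathfrak{g}_3$ of Theorem \ref{theo-Sasaki-nontrivial} via the $\alpha$-Sasakian member of that family.

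\emph{Null $\eta$-Einstein.} In both surviving cases the quotient is Ricci-flat ($\operatorname{Ric}=0$ for $\mathfrak{rr}'_{3,0}$ by Table \ref{table4}, and trivially for $\mathbb{R}^4$). The computations in the proof of Theorem \ref{theorem14} give
\[
\mathrm{Ric}=c\left(-\tfrac12 g+\tfrac32\eta\otimes\eta\right),
\]
so $\mathrm{Ric}|_{\mathfrak{h}}=-\tfrac12 c\, h$. Thus the transverse Ricci tensor $\rho^\mathfrak{h}=\mathrm{Ric}|_\mathfrak{h}+\tfrac12\sum_i\omega(X_i,\cdot)\omega(X_i,\cdot)$ vanishes, which is the null (transversely Ricci-flat) $\eta$-Einstein condition in the sense of \cite{DG}.

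For the converse direction, the remaining Lie algebras in Table \ref{table-eta-einstein} have non-Ricci-flat K\"ahler quotients, but that is irrelevant once unimodularity has excluded them.

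\emph{Main obstacle.} The only delicate point is checking the traces correctly against Ovando's conventions, in particular for $\mathfrak{d}'_{4,\delta}$ and $\mathfrak{r}'_{4,0,\delta}$, and matching the entries $\mathfrak{g}'_1$ and $\mathfrak{g}'_3$ of Table \ref{table-eta-einstein} to the named isomorphism types.
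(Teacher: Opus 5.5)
Your proof is correct. It shares the first step with the paper: unimodularity of $\mathfrak g$ is equivalent to unimodularity of the K\"ahler quotient. The paper only asserts this, whereas you justify it through the block-triangular form of $\ad_X$. The key step then differs.

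\textbf{The paper's route.} The paper computes no traces. It invokes Hano's theorem that a K\"ahler Lie algebra is unimodular if and only if it is flat, and reads off from Table~\ref{table4} that the only flat quotients are $\mathbb{R}^4$ and $\mathfrak{rr}'_{3,0}$. This makes the null (transversely Ricci-flat) conclusion immediate and valid in any dimension, since the quotient is flat. The $4$-dimensional classification is used only to identify which quotients, and hence which algebras in Table~\ref{table-eta-einstein}, occur.

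\textbf{Your route.} You check unimodularity by hand on Ovando's list, which is elementary and self-contained. It is, however, purely computational and tied to dimension $4$. Nullity comes only a posteriori, from the Ricci-flatness of $\mathfrak{rr}'_{3,0}$ recorded in Table~\ref{table4}.

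\textbf{Two small remarks.}
\begin{itemize}
\item With the paper's convention $de(X,Y)=-e([X,Y])$, the trace of $\ad_{e_4}$ on $\mathfrak r'_{4,0,\delta}$ is $+1$, not $-1$. This is consistent with your value $+1$ for $\mathfrak{rr}_{3,0}$, and the slip is harmless since only nonvanishing matters.
\item Your final step, recovering $\rho^{\mathfrak h}=0$ from $\mathrm{Ric}=c\bigl(-\tfrac12 g+\tfrac32\eta\otimes\eta\bigr)$, is redundant. Ricci-flatness of the quotient is already the transversal Ricci-flatness that defines null $\eta$-Einstein.
\end{itemize}
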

\begin{proof}
    Denoting by $(\mathfrak{h},J,h)$ the K\"ahler quotient, $\mathfrak{g}$ is unimodular if and only if so is $\mathfrak{h}$. On the other hand, the K\"ahler Lie algebra $\mathfrak{h}$ is unimodular iff it is flat (\cite{Hano}). Therefore $\mathfrak{g}$ is the central extension either of $\mathbb{R}^4$ of $\mathfrak{rr}'_{3,0}$, and the result follows from Theorem \ref{theorem14}.
\end{proof}

    Theorem \ref{theorem14} classifies, in particular, $5$-dimensional $\eta$-Einstein  \emph{quasi Sasakian} Lie algebras of maximal rank and with nontrivial center, and also $\eta$-Einstein \emph{$\alpha$-Sasakian} Lie algebras with nontrivial center. Point 4. in the statement of Theorem \ref{theorem14} is coherent with \cite[Proposition 3.3]{AC} where it is showed that a Sasakian Lie algebra with center is $\eta$-Einstein if and only if the K\"ahler quotient is Einstein. Consequently, Theorem \ref{theorem14} completes the classification of $5$-dimensional $\eta$-Einstein Sasakian Lie algebras with center contained in \cite{AF}. Referring to the list in Theorem \ref{theo-Sasaki-trivial}, we have the following:

\begin{theorem}
    The $5$-dimensional $\eta$-Einstein Sasakian Lie algebras with nontrivial center are $\mathfrak{g}_1\simeq \mathfrak{h}_5$, $\mathfrak{g}_3\simeq\mathbb{R}\ltimes (\mathfrak{h}_3\times \mathbb{R})$, $\mathfrak{g}_4\simeq\mathfrak{aff}(\mathbb{R})\times \mathfrak{aff}(\mathbb{R})\times\mathbb{R}$, $\mathfrak{g}_5\simeq\mathbb{R}\times (\mathbb{R}\ltimes \mathfrak{h}_3)$, and $\mathfrak{g}_7^\delta \simeq\mathbb{R}\times (\mathbb{R} \ltimes \mathfrak{h}_3),\delta>0$.
\end{theorem}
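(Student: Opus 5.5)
The statement sits at the intersection of two earlier results, so the plan is to combine them. First, a Sasakian structure is in particular quasi Sasakian, with $d\eta=2\Phi$, so $\eta$ is a contact form. Hence a $5$-dimensional Sasakian Lie algebra with nontrivial center is a transversely K\"ahler a.c.m. Lie algebra of maximal rank with nontrivial center. By Theorem \ref{theorem6} it is the central extension of a $4$-dimensional K\"ahler Lie algebra $(\mathfrak{h},J,h,\Omega)$. By Remark \ref{remark aqS qS}, the extension is via $\omega=2\Omega$. Since $\alpha$-Sasakian structures reduce to Sasakian ones by a homothetic deformation (which preserves the $\eta$-Einstein condition up to changing the constants), the $\eta$-Einstein Sasakian Lie algebras with center are exactly the $\alpha$-Sasakian entries of Table \ref{table-eta-einstein}. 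By point 3 of Theorem \ref{theorem14}, these are precisely the central extensions of the $4$-dimensional K\"ahler--Einstein Lie algebras via a multiple of their K\"ahler form.

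Second, I would read off from Table \ref{table4}, together with the abelian case, which K\"ahler quotients are Einstein. These are $\mathbb{R}^4$ (flat), $\mathfrak{rr}'_{3,0}$ (flat), $\mathfrak{r}_2\mathfrak{r}_2$ with $a=b$, $\mathfrak{d}_{4,1/2}$, and $\mathfrak{d}'_{4,\delta}$ with either complex structure. The others are excluded: $\mathfrak{rr}_{3,0}$ and $\mathfrak{r}'_{4,0,\delta}$ have Ricci eigenvalues $(c,c,0,0)$ with $c\neq0$, and $\mathfrak{d}_{4,2}$ has distinct eigenvalues. This is consistent with \cite[Proposition 3.3]{AC}.

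Third, I would identify each extension with the list of Theorem \ref{theo-Sasaki-nontrivial}. Extending by $\omega=2\alpha\Omega$ gives $de^5=\omega$ up to scaling. This yields $\mathfrak{h}_5=\mathfrak{g}_1$ from $\mathbb{R}^4$, $\mathfrak{g}_3$ from $\mathfrak{rr}'_{3,0}$, $\mathfrak{g}_4$ from $\mathfrak{r}_2\mathfrak{r}_2$, $\mathfrak{g}_5$ from $\mathfrak{d}_{4,1/2}$, and $\mathfrak{g}_7^\delta$ from $\mathfrak{d}'_{4,\delta}$. Each identification is checked by comparing structure equations, which is immediate from the presentations: for example, $\mathfrak{g}_4=(0,-e^{12},0,-e^{34},e^{12}+e^{34})$ is visibly the extension of $\mathfrak{r}_2\mathfrak{r}_2$ by $e^{12}+e^{34}$. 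For $\mathfrak{d}'_{4,\delta}$ with $J_3$, where $a<0$, I would note that it yields the same Lie algebra $\mathfrak{g}_7^\delta$. The remaining algebras $\mathfrak{g}_2,\mathfrak{g}_6,\mathfrak{g}_8^\delta$ are the extensions of $\mathfrak{rr}_{3,0}$, $\mathfrak{d}_{4,2}$ and $\mathfrak{r}'_{4,0,\delta}$, which are non-Einstein, and so they are excluded.

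The main obstacle I expect is the third step: confirming that every $\eta$-Einstein Sasakian structure on each listed algebra really arises from the K\"ahler--Einstein quotient, and matching the parameters. For instance, $\mathfrak{r}_2\mathfrak{r}_2$ requires $a=b$, and this must be compatible with the form $e^{12}+e^{34}$ in $\mathfrak{g}_4$ after rescaling. I would handle this using Proposition \ref{proposition-isom} and explicit rescalings of basis vectors.
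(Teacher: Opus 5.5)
Your proposal is correct and follows essentially the paper's own route. You pass between Sasakian and $\alpha$-Sasakian structures by a homothety, which preserves the $\eta$-Einstein condition; you then read off the $\alpha$-Sasakian rows of Table~\ref{table-eta-einstein}, which are equivalently the K\"ahler--Einstein quotients as in point 3 of Theorem~\ref{theorem14} and \cite[Proposition 3.3]{AC}; and you match these with the list of Theorem~\ref{theo-Sasaki-nontrivial}.

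One caution: do not rely on the $\mathfrak{d}_{4,2}$ row of Table~\ref{table4} for ``distinct eigenvalues''. That Ricci tensor is not $J$-invariant, because the $J$ listed for $\mathfrak{d}_{4,2}$ in Table~\ref{table1} is not integrable. For a genuine K\"ahler structure, such as $Je_1=e_4$, $Je_2=\tfrac12 e_3$, one gets Ricci eigenvalues $(c,0,0,c)$ with $c<0$. This is still non-Einstein, so your exclusion of $\mathfrak{g}_6$ stands.
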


To conclude, Corollary \ref{corollary-eta-eistein} and Theorem \ref{theorem14} together lead us to the following result.

\begin{theorem}\label{theorem15}
   Any $5$-dimensional  Lie algebra admitting $\eta$-Einstein  transversely K\"ahler almost contact metric structure of maximal rank is isomorphic to one of the following: $\mathfrak{sl}(2,\mathbb{R})\times \mathfrak{aff}(\mathbb{R})$,  $\mathfrak{g}_0\simeq \mathbb{R}^2 \ltimes\mathfrak{h}_3$, $\mathfrak{g}'_1\simeq\mathfrak{h}_5$, $\mathfrak{g}'_2\simeq\mathfrak{aff}(\mathbb{R})\times \mathfrak{h}_3$, $\mathfrak{g}'_3\simeq\mathbb{R}\ltimes (\mathfrak{h}_3\times \mathbb{R})$, 
   $\mathfrak{g}'_4\simeq \mathfrak{aff}(\mathbb{R})\times \mathfrak{aff}(\mathbb{R})\times\mathbb{R}$, $\mathfrak{g}'_5\simeq\mathbb{R}\ltimes(\mathfrak{h}_3\times \mathbb{R})$, $\mathfrak{g}'_6\simeq\mathbb{R}\times (\mathbb{R}\ltimes \mathfrak{h}_3)$, $\mathfrak{g}'_7\simeq\mathbb{R}\times (\mathbb{R} \ltimes \mathfrak{h}_3)$.
\end{theorem}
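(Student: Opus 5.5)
The proof splits according to whether the center $\mathfrak{z}(\mathfrak{g})$ is trivial or not. This is exhaustive: since $\eta$ is a contact form, [\citenum{AF}, Proposition 1] shows that the center is either trivial or equal to $\mathbb{R}\xi$. The result is then the union of the two classifications already proved, and the remaining work is to identify each Lie algebra up to isomorphism.

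\emph{Trivial center.} Here Theorem~\ref{theorem13} gives three possibilities: $\mathfrak{aff}(\mathbb{R})\times\mathfrak{sl}(2,\mathbb{R})$, $\mathfrak{aff}(\mathbb{R})\times\mathfrak{su}(2)$, or $\mathfrak{g}_0$. In each case the structure is $\alpha$-Sasakian. Corollary~\ref{corollary-eta-eistein} removes $\mathfrak{aff}(\mathbb{R})\times\mathfrak{su}(2)$. The argument is to apply the homothetic deformation \eqref{deformation}, which turns an $\eta$-Einstein $\alpha$-Sasakian structure into an $\eta$-Einstein Sasakian one, and then invoke \cite[Section 4]{AF}. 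This leaves exactly $\mathfrak{sl}(2,\mathbb{R})\times\mathfrak{aff}(\mathbb{R})$ and $\mathfrak{g}_0$ in this case.

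\emph{Nontrivial center.} Here Theorem~\ref{theorem6} shows that $\mathfrak{g}$ is the central extension of a $4$-dimensional K\"ahler Lie algebra $\mathfrak{h}$ via a symplectic form. Theorem~\ref{theorem14} then lists the $\eta$-Einstein structures, namely the algebras $\mathfrak{g}'_1,\dots,\mathfrak{g}'_7$ of Table~\ref{table-eta-einstein}. The steps are:
\begin{itemize}
\item The case $\mathfrak{h}=\mathbb{R}^4$ gives $\mathfrak{g}'_1\simeq\mathfrak{h}_5$, as noted in the proof of Theorem~\ref{theorem14}.
\item For each non-abelian quotient other than $\mathfrak{d}_{4,2}$, namely $\mathfrak{rr}_{3,0}$, $\mathfrak{rr}'_{3,0}$, $\mathfrak{r}_2\mathfrak{r}_2$, $\mathfrak{r}'_{4,0,\delta}$, $\mathfrak{d}_{4,1/2}$ and $\mathfrak{d}'_{4,\delta}$, write the structure equations of the extension with the $\eta$-Einstein form $\omega$ in the form found in Theorem~\ref{theorem14}.
\item Identify each extension with a known $5$-dimensional algebra. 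The natural reference point is the list in Theorem~\ref{theo-Sasaki-nontrivial}: the extensions with $\omega=\lambda\Omega$ are $\alpha$-Sasakian, so after rescaling $\xi$ they are Sasakian and appear there as $\mathfrak{g}_2,\dots,\mathfrak{g}^\delta_8$, with the isomorphism types given in that theorem.
\end{itemize}

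The main obstacle is the extensions that are quasi Sasakian but not $\alpha$-Sasakian, for example $\omega=\lambda(ae^{14}-be^{23})$ on $\mathfrak{rr}'_{3,0}$ and the analogous forms on $\mathfrak{r}'_{4,0,\delta}$. For these, the underlying Lie algebra must be shown to be isomorphic to the same algebra as the Sasakian extension, or to one already in the list. My first attempt would be an explicit change of basis rescaling $e_2,e_3$ (or $e_1,e_4$) by signs. For instance, replacing $e_4$ by $-e_4$ in $\mathfrak{rr}'_{3,0}$ is a Lie algebra automorphism (since $e_4$ is central) and changes $e^{14}-e^{23}$ into $-(e^{14}+e^{23})$. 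Because Lie algebra isomorphism need not preserve the metric, this reduces the quasi Sasakian extension to the Sasakian one as abstract Lie algebras. If such a sign trick fails for some algebra, the fallback is to compare invariants: the dimensions of the derived series, the nilradical, and unimodularity.

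Collecting the two cases gives exactly the list in the statement.
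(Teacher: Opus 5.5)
Your proposal is correct and is essentially the paper's argument: the paper proves Theorem~\ref{theorem15} simply by combining Corollary~\ref{corollary-eta-eistein} (trivial center) with Theorem~\ref{theorem14} (nontrivial center, i.e.\ $\mathfrak{z}(\mathfrak{g})=\mathbb{R}\xi$ by [\citenum{AF}, Proposition 1]). The identification step you flag as the main obstacle, for the quasi Sasakian non $\alpha$-Sasakian extensions, is already part of Theorem~\ref{theorem14}, since Table~\ref{table-eta-einstein} records the isomorphism type of each $\mathfrak{g}'_i$, so it is not needed here (and if you did redo it, you would need explicit changes of basis, since comparing invariants can only distinguish algebras, not prove them isomorphic).
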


\begin{landscape}
\setcellgapes{8pt} 
\makegapedcells
\begin{table}[htbp]
\centering
\resizebox{1.6\textwidth}{!}{
\begin{tabular}{|c|c|c|c|c|c|c|c|}
\hline
\multicolumn{2}{|c|}{\textbf{Extended Lie algebra}}
& \textbf{Structure tensor $\varphi$} 
&\textbf{\makecell{Fundamental \\ form $\Phi$}}
& \textbf{quasi Sasakian} 
& \textbf{$\alpha$-Sasakian}
& \textbf{\makecell{Transverse K\"ahler\\ Lie algebra}} 
& \textbf{Ricci curvature} 
\\
\hline
\multirow{2}{*}{\makecell{$\mathfrak{g}'_1 \simeq \mathfrak{h}_5$}}
&
\makecell{$\left(0,0,0,0,\lambda(ae^{12}+ be^{34})+\beta(e^{13}-e^{24})\right)$ \\[8pt] $a,b>0,\ (\lambda,\beta)\ne (0,0) $}
&
$\varphi e_1=e_2,\;
\varphi e_3=e_4,\;
\varphi \xi=0$
&
$-(ae^{12}+be^{34})$
&
\makecell{ iff $\beta=0$\\[8pt]
(aqS iff $\lambda=0$)}
&
iff $\beta=0$
&
$\mathbb{R}^4$ (Flat)
&
$\mathrm{Ric}
=\left(\lambda^2+\frac{\beta^2}{ab}\right)\left(-\frac{1}{2}g
+\frac{3}{2}\eta\otimes\eta\right)$
\\
\cline{2-8}
&
\makecell{$\left(0,0,0,0,\lambda(ae^{12}-be^{34})+\beta(e^{13}+ e^{24})\right)$\\[8pt]  $a,b>0,\ (\lambda,\beta)\ne (0,0)$}
&
$\varphi e_1=e_2,\;
\varphi e_3=e_4,\;
\varphi \xi=0$
&
$-(ae^{12}+be^{34})$
&
yes
&
no
&
$\mathbb{R}^4$ (Flat)
&
$\mathrm{Ric}
=\left(\lambda^2+\frac{\beta^2}{ab}\right)\left(-\frac{1}{2}g
+\frac{3}{2}\eta\otimes\eta\right)$
\\
\hline
$ \mathfrak{g}'_2 \simeq \mathfrak{aff}(\mathbb{R})\times \mathfrak{h}_3$ 
&
\makecell{$\left(0, -e^{12},0,0,\lambda a e^{12}+\mu b e^{34}\right)$\\[8pt]$ a,b>0,\ \lambda,\mu\ne0,\ \frac{\lambda^2}{2}+\frac1a=\frac{\mu^2}{2}$}
&
$
\varphi e_1=e_2,\;
\varphi e_3=e_4,\;
\varphi \xi=0
$
&
$-(ae^{12}+be^{34})$
&
yes
&
no
&
$\mathfrak{r}\mathfrak{r}_{3,0}$ (not Einstein)
&
$
\mathrm{Ric}
=-\frac{\mu^2}{2}g
+\left(\frac{3\mu^2}{2}-\frac1a\right)\eta\otimes\eta
$
\\
\hline
$\mathfrak{g}'_3 \simeq \mathbb{R}\ltimes(\mathfrak{h}_3\times\mathbb{R})$
&
\makecell{$\left(0,-e^{13},e^{12},0,\lambda(ae^{14}\pm be^{23})\right)$\\[8pt] $a,b>0,\ \lambda\ne 0$}
&
$
\varphi e_1=e_4,\;
\varphi e_2=e_3,\;
\varphi \xi=0
$
&
$-(ae^{14}+be^{23})$
&
yes
&
iff $de^5=\lambda(ae^{14}+be^{23})$
&
$\mathfrak{rr'}_{3,0}$ (Flat)
&
$
\mathrm{Ric}
=-\frac{\lambda^2}{2}g
+\frac{3\lambda^2}{2}\eta\otimes\eta
$
\\
\hline
$\mathfrak{g}'_4 \simeq \mathrm{aff}(\mathbb{R})\times
\mathrm{aff}(\mathbb{R})\times\mathbb{R}$
&
\makecell{$\left(0,-e^{12},0,-e^{34},\lambda ae^{12}+\mu b e^{34}\right)$\\[8pt] $a,b >0,\  \lambda,\mu\ne 0,\ \frac{\lambda^2}{2}+\frac1a=\frac{\mu^2}{2}+\frac{1}{b} $}
&
$
\varphi e_1=e_2,\;
\varphi e_3=e_4,\;
\varphi \xi=0
$
&
$-(ae^{12}+b e^{34})$
&
yes
&
iff $\lambda=\mu$ and $a=b$
&
$\mathfrak{r}_2\mathfrak{r}_2$ (Einstein iff $a=b$)
&
$
\mathrm{Ric}
=\Big(-\frac{\lambda^2}{2}-\frac{1}{a}\Big)g
+\Big(\lambda^2+\frac{\mu^2}{2}+\frac{1}{a}\Big)\eta\otimes\eta
$\\
\hline
\multirow{2}{*}{\makecell{$\mathfrak{g}'_5\simeq  \mathbb{R}\ltimes(\mathfrak{h}_3\times \mathbb{R})$}}
&
\makecell{$\left(e^{14},\delta e^{34},-\delta e^{24},0, \lambda ae^{14}+\mu b e^{23}\right)$\\[8pt] $a<0,\ b >0,\ \delta>0,\  \lambda,\mu\ne 0,\ \frac{\lambda^2}{2}-\frac1a=\frac{\mu^2}{2} $}
&
$
\varphi e_4=e_1,\;
\varphi e_2=e_3,\;
\varphi \xi=0
$
&
$-(ae^{14}+b e^{23})$
&
yes
&
no
&
$\mathfrak{r}'_{4,0,\delta}$ with $J_1$ (not Einstein)
&$
\mathrm{Ric}=
-\frac{\mu^2}{2}g
+\Big(\frac{3\mu^2}{2}+\frac{1}{a}\Big)\eta\otimes\eta
$\\
\cline{2-8}
&
\makecell{$\left(e^{14},\delta e^{34},-\delta e^{24},0, \lambda ae^{14}+\mu b e^{23}\right)$\\[8pt] $\quad a<0,\ b <0,\ \delta>0,\ \lambda,\mu\ne 0,\ \frac{\lambda^2}{2}-\frac1a=\frac{\mu^2}{2} $}
&
$
\varphi e_4=e_1,\;
\varphi e_3=e_2,\;
\varphi \xi=0
$
&
$-(ae^{14}+b e^{23})$
&
yes
&
no
&
$\mathfrak{r}'_{4,0,\delta}$ with $J_2$ (not Einstein)
&$
\mathrm{Ric}=
-\frac{\mu^2}{2}g
+\Big(\frac{3\mu^2}{2}+\frac{1}{a}\Big)\eta\otimes\eta
$\\
\hline
$\mathfrak{g}'_6 \simeq \mathbb{R}\times(\mathbb{R}\ltimes\mathfrak{h}_3)$
&
\makecell{$\left(\tfrac{1}{2}e^{14},\tfrac{1}{2}e^{24},
-e^{12}+e^{34},0,
\lambda a(e^{12}-e^{34})\right)$\\[8pt] $a>0,\ \lambda\ne 0$}
&
$\varphi e_1=e_2,\;
\varphi e_4=e_3,\;
\varphi \xi=0$
&
$-a(e^{12}-e^{34})$
&
yes
&
yes
&
$\mathfrak{d}_{4,\frac12}$ (Einstein)
&
$\mathrm{Ric}
=\Big(-\frac{\lambda^2}{2}-\frac{3}{2a}\Big)g
+\Big(
\frac{3\lambda^2}{2}+\frac{3}{2a}\Big)\eta\otimes\eta$
\\
\hline
\multirow{2}{*}{\makecell{$\mathfrak{g}'_7 \simeq \mathbb{R}\times(\mathbb{R}\ltimes\mathfrak{h}_3)$}}
&
\makecell{$\left(\tfrac{\delta}{2}e^{14}+e^{24},
-e^{14}+\tfrac{\delta}{2}e^{24},
-e^{12}+\delta e^{34},0,
\lambda a(e^{12}-\delta e^{34})\right)$\\[8pt] $a>0,\ \delta >0,\ \lambda\ne0$}
&
$\varphi e_1=e_2,\;
\varphi e_4=e_3,\;
\varphi \xi=0$
&
$-a(e^{12}-\delta e^{34})$
&
yes
&
yes
&
$\mathfrak{d}'_{4, \delta}$ with $J_1$ (Einstein)
&
$\mathrm{Ric}
=\Big(-\frac{\lambda^2}{2}-\frac{3\delta}{2a}\Big)g
+\Big(\frac{3\lambda^2}{2}+\frac{3\delta}{2a}
\Big)\eta\otimes\eta$
\\
\cline{2-8}
&
\makecell{$\left(\tfrac{\delta}{2}e^{14}+e^{24},
-e^{14}+\tfrac{\delta}{2}e^{24},
-e^{12}+\delta e^{34},0,
\lambda a(e^{12}-\delta e^{34})\right)$\\[8pt] $a<0,\ \delta >0,\ \lambda\ne0$ }
&
$\varphi e_2=e_1,\;
\varphi e_3=e_4,\;
\varphi \xi=0$
&
$-a(e^{12}-\delta e^{34})$
&
yes
&
yes
&
$\mathfrak{d}'_{4, \delta}$ with $J_3$ (Einstein)
&
$\mathrm{Ric}
=\Big(-\frac{\lambda^2}{2}+\frac{3\delta}{2a}\Big)g
+\Big(
\frac{3\lambda^2}{2}-\frac{3\delta}{2a}\Big)\eta\otimes\eta$
\\
\hline
\end{tabular}}
\caption{$\eta$-Einstein transversely K\"ahler almost contact metric Lie algebras of maximal rank with nontrivial center}\label{table-eta-einstein}
\end{table}
\end{landscape}

\bigskip

%%%%%%%%%%% 

\bigskip
\bigskip

\noindent{\sc Giulia Dileo}\\
Dipartimento di Matematica, Università degli Studi di Bari Aldo Moro, Via E. Orabona 4, 70125 Bari, Italy\\
\texttt{giulia.dileo@uniba.it}\\

\noindent{\sc Deniz Poyraz}\\
Department of Mathematics,
Ege University,
Izmir,
Turkiye
\\
\texttt{deniz.poyraz@ege.edu.tr}\\

\noindent{\sc Bayram \c{S}ahin}\\
Department of Mathematics,
Ege University,
Izmir,
Turkiye
\\
\texttt{bayram.sahin@ege.edu.tr}\\

\end{document}